\documentclass{amsart}
\usepackage{todonotes}
\usepackage{amsmath}
\usepackage{amsfonts}
\usepackage{amssymb}
\usepackage[all]{xy}
\usepackage{hyperref}
\usepackage{tikz}
\usepackage{tikz-cd}
\usetikzlibrary{calc}

\definecolor{shaded}{rgb}{0.99, 0.76, 0.8} 

\theoremstyle{plain}
\newtheorem*{introtheorem}{Theorem}
\newtheorem{theorem}{Theorem}[section]
\newtheorem{proposition}[theorem]{Proposition}
\newtheorem{lemma}[theorem]{Lemma}
\newtheorem{corollary}[theorem]{Corollary}

\newtheorem*{proposition*}{Proposition}

\theoremstyle{definition}
\newtheorem{definition}[theorem]{Definition}
\newtheorem{example}[theorem]{Example}

\theoremstyle{remark}
\newtheorem{remark}[theorem]{Remark}

\newcommand{\secref}[1]{Section~\ref{#1}}
\newcommand{\thmref}[1]{Theorem~\ref{#1}}
\newcommand{\propref}[1]{Proposition~\ref{#1}}
\newcommand{\lemref}[1]{Lemma~\ref{#1}}
\newcommand{\corref}[1]{Corollary~\ref{#1}}

\newcommand{\exref}[1]{Example~\ref{#1}}

\newcommand{\remref}[1]{Remark~\ref{#1}}

\newcommand{\defref}[1]{Definition~\ref{#1}}

\newcommand{\Z}{\mathbb{Z}}

\renewcommand{\bar}{\overline}

\begin{document}

\title[Simplicial Loop Space]{The Simplicial Loop Space of a Simplicial Complex}

\author[G.\ Lupton]{Gregory Lupton}
\author[J.\ Scott]{Jonathan Scott}

\address{Department of Mathematics and Statistics, Cleveland State University, Cleveland OH 44115 U.S.A.}

\email{g.lupton@csuohio.edu}

\email{j.a.scott3@csuohio.edu}

\date{\today}

\keywords{Simplicial Complex, Edge Group, Fundamental Group, Based Loop Space, Face Group, Second Homotopy Group, Spatial Realization}
\subjclass[2020]{ (Primary) 55U10;  (Secondary) 55P35 05E45}

\begin{abstract}   Given a simplicial complex $X$, we construct a simplicial complex $\Omega X$ that may be regarded as a combinatorial version of the based loop space of a topological space. Our construction explicitly describes the simplices of $\Omega X$ directly in terms of the simplices of $X$.  Working at a purely combinatorial level, we show two main results that confirm the (combinatorial) algebraic topology of our $\Omega X$ behaves like that of the topological based loop space. Whereas our $\Omega X$ is generally a disconnected simplical complex, each component of $\Omega X$ has the same edge group, up to isomorphism (\thmref{thm: homogeneous components}).  We show an isomorphism between the edge group of $\Omega X$ and the combinatorial second homotopy group of $X$ as it has been defined in separate work (\thmref{thm: edge Omega X and face X}). Finally, we enter the topological setting and, relying on prior work of Stone, show a homotopy equivalence between the spatial realization of our $\Omega X$ and the based loop space of the spatial realization of $X$ (\thmref{thm:r-equiv}).  
\end{abstract}

\maketitle

\section{Introduction}\label{sec: Intro}

The \emph{edge group} of a simplicial complex is a well-known construction that gives a combinatorial description of the fundamental group (see \cite{Mau96} for example). If $E(X)$ denotes the edge group of the simplicial complex $X$, then we have an isomorphism of groups $E(X) \cong \pi_1(|X|)$ between the (combinatorially defined) edge group of $X$ and the (topological) fundamental group of $|X|$, the spatial realization of $X$. Now \cite{L-S-S} gives a like-minded combinatorial description of the second homotopy group of a simplicial complex.  In that work, this group is called the \emph{face group} of a simplicial complex, it is denoted by $F(X)$, and we show an isomorphism of groups $F(X) \cong \pi_2(|X|)$ between the (combinatorially defined) face group of $X$ and the (topological) second homotopy group of $|X|$. In the topological setting, for a topological space $Y$, we have an isomorphism of groups 
$\pi_2(Y) \cong \pi_1(\Omega Y)$, where $\Omega Y$ denotes the based loop space of $Y$.  It is natural to ask for a combinatorial version of this isomorphism.  We establish such in this paper by describing in a very concrete way  a simplicial complex $\Omega X$ associated with each simplicial complex $X$ that plays the role of a combinatorial based loop space in a most satisfactory way.  

\begin{introtheorem}
    Let $X$ be a simplicial complex.  There is an associated simplicial complex $\Omega X$ that satisfies
    $$F(X) \cong E(\Omega X).$$
\end{introtheorem}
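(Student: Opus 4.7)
The plan is to present both groups via generators and relations and match them up directly. The edge group $E(\Omega X)$ of any simplicial complex has a well-known presentation in which the generators are edges (relative to a chosen spanning tree of the component containing the basepoint) and the relators come from the triangles of $\Omega X$. The face group $F(X)$ defined in \cite{L-S-S} has an analogous presentation whose generators are face-loops built from the $2$-simplices of $X$ and whose relators come from prescribed incidences among higher-dimensional simplices. The first order of business is to record both presentations in compatible notation.

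Next I would exhibit an explicit bijection between the generators of $F(X)$ and the edges of (a fixed component of) $\Omega X$. This bijection should be essentially built into the construction of $\Omega X$: its vertices are edge loops in $X$ based at $v_0$, and its edges correspond to elementary moves that alter such a loop by inserting, deleting, or sliding across a $2$-simplex of $X$. After choosing a spanning tree in $\Omega X$, each surviving edge will correspond to an elementary face-loop generator of $F(X)$.

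With the correspondence in place, I would define two group homomorphisms, $\Phi: F(X) \to E(\Omega X)$ sending each face-loop generator to the corresponding edge of $\Omega X$, and $\Psi: E(\Omega X) \to F(X)$ sending each edge to the corresponding face-loop. For these to be well-defined one must verify in both directions that the defining relators are killed: every relator of $F(X)$ becomes trivial in $E(\Omega X)$ (witnessed by a product of triangle-relators of $\Omega X$), and every triangle of $\Omega X$ becomes trivial in $F(X)$. Because $\Phi$ and $\Psi$ are inverse on generators by construction, once both are seen to be well-defined, they are mutually inverse isomorphisms, completing the proof.

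The main obstacle will be this second step, verifying relations. One expects relators in $F(X)$ of several flavours — commutation of disjoint $2$-simplex insertions, cancellation of an insertion by a subsequent deletion, and relators coming from the boundary of each $3$-simplex of $X$ — and each must be shown to follow from the triangle-relators of $E(\Omega X)$. Conversely, each triangle of $\Omega X$ (whose combinatorial type is dictated by the explicit construction) must be shown to be a consequence of the $F(X)$ relators. A careful normal form for edge loops in $X$, obtained via the spanning tree of $\Omega X$, should reduce this to a finite and manageable list of local moves to check one by one.
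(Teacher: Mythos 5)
Your proposal founders at the very first step: the presentation of $F(X)$ that you posit does not exist. The face group is defined in \cite{L-S-S} as the set of extension-contiguity classes of simplicial maps $\left(I_m\times I_n,\partial(I_m\times I_n)\right)\to(X,x_0)$, and it is isomorphic to $\pi_2(|X|)$. There is no presentation of this group with ``generators the face-loops built from the $2$-simplices of $X$ and relators from incidences among higher simplices,'' and there cannot be one in general: a finite triangulation of $S^1\vee S^2$ has finitely many $2$-simplices, yet $\pi_2\cong\Z[t,t^{-1}]$ is not finitely generated as a group. Unlike $\pi_1$, the second homotopy group admits no local combinatorial presentation read off from the simplices; it depends on the universal cover. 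So the object you propose to compare with $E(\Omega X)$ is not available, and building it would be a separate (and in general impossible) theorem.

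There is also a level confusion in the proposed bijection. An edge of $\Omega X$ is a single contiguity (or extension) move between two based loops in $X$; a generator of $E(\Omega X)$ relative to a spanning tree is a closed edge loop in $\Omega X$, i.e.\ a whole cyclic sequence of such moves, which corresponds to an entire face sphere $I_m\times I_n\to X$, not to a single $2$-simplex of $X$. The paper's proof exploits exactly this correspondence: it defines $\Phi$ directly on representatives by reading the rows of a face sphere as a loop in $\Omega X[m]$ and splicing in the staircase $\mathbf{x_0},\mathbf{x_0^1},\dots,\mathbf{x_0^m}$ of constant loops to reach the basepoint, then checks well-definedness under contiguity and trivial extension, the homomorphism property, surjectivity via the normal form of \propref{prop: loop in X(M)}, and injectivity by translating a null-contiguity in $\Omega X$ back into a null-contiguity of face spheres via \lemref{lem: contiguity loops face spheres}. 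If you want to salvage your plan, you should abandon the generators-and-relations framing for $F(X)$ and instead work with the defining representatives on both sides, which is in effect the combinatorial adjunction $\mathrm{map}(I\times I,X)\equiv\mathrm{map}(I,\mathrm{map}(I,X))$ that the paper implements.
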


We describe the simplicial complex $\Omega X$ in \secref{sec: Omega X} and show the isomorphism above in \thmref{thm: edge Omega X and face X}.  Other results support the claim of $\Omega X$ to be considered the simplicial loop space. In the topological setting, the connected components of the based loop space $\Omega Y$ are in one-to-one correspondence with the elements of the fundamental group $\pi_1(Y)$. Also, it is well-known that the connected components of $\Omega Y$ are ``homogeneous" in the sense that they are homotopy equivalent to each other.  Here, we show likewise that the (edge-path-) connected components of $\Omega X$ are in one-to-one correspondence with the elements of the edge group $E(X)$ (\corref{cor: components and edge group}).  Whilst we are not able to show the components of $\Omega X$ are combinatorially equivalent in some sense, we do show that different components  have isomorphic edge group (\thmref{thm: homogeneous components}). All results mentioned thus far are shown using entirely combinatorial methods, based on our explicit description of $\Omega X$ as a simplicial complex.

Following these combinatorial results, we pass to the topological setting and show that $|\Omega X|$, the spatial realization of our simplicial loop space, is homotopy equivalent to $\Omega |X|$, the (topological) based loop space of the spatial realization of $X$. Whilst our earlier results may be deduced from this homotopy equivalence, we believe the combinatorial arguments have independent value as they confirm that our $\Omega X$ has the correct combinatorial structure to be considered as the simplicial loop space. It does not merely qualify at the level of spatial realization.  

As this work was being completed, we  became aware of the work of \cite{Gr02}.  There, Grandis associates to a simplicial complex $X$ various other simplicial complexes that model 
spaces of maps into $X$, such as path and loop spaces.  In particular, he defines the homotopy group $\widetilde{\pi_2}(X)$ of a simplicial complex $X$ as $\widetilde{\pi_1}(\widetilde{\Omega} X)$, where $\widetilde{\Omega} X$ is a simplicial complex that plays the role of the based loop space of $X$, and $\widetilde{\pi_1}(X)$ is an ``extrinsic" fundamental group that is isomorphic to the ```intrinsic" edge  group of $X$, just as we have here. We use the decorated notations $\widetilde{\pi_i}$ and $\widetilde{\Omega}$ here to distinguish those of \cite{Gr02} from ours in the following discussion. Whilst there are considerable similarities between our work here and some of that in \cite{Gr02} (which has a much wider-ranging scope than our work here), we point here to some of their differences. We also discuss some differences between our development and that of \cite{Gr02} in \cite{L-S-S}. First, our work here harks back to prior work in digital topology.  From that digital point of view,  it is useful  to have a development that matches the way in which we work in the digital setting: rectangular (finite) domains, left homotopy (using a cylinder object), trivial extensions. The trivial extensions we use here are the same as the ``delays" of \cite{Gr02}, but we have adopted their use from Boxer's work in digital topology \cite{Bo99}. We chose our paths and loops so as to correspond directly to edge paths in $X$, namely finite sequences of vertices. Second, Grandis shows an isomorphism  $\widetilde{\pi_2}(X) \cong \pi_2(|X|)$, so our $E(\Omega X, \mathbf{x_0})$, the edge group of $\Omega X$,  is apparently isomorphic to Grandis' 
$\widetilde{\pi_2}(X)$ (\emph{per} \thmref{thm: edge Omega X and face X}, \thmref{thm:r-equiv}, and  \cite[Th.8.1]{L-S-S}.  However, the isomorphism is not so evident without passing through the spatial realization. The proof of  \thmref{thm: edge Omega X and face X}  takes considerable work handling technical details that arise in what, overall, is a fairly clear line of argument. 
Third, our simplicial complex $\Omega X$ here is combinatorially different from the $\widetilde{\Omega}X$ of \cite{Gr02} in several ways.  A path in \cite{Gr02} has domain $\Z$. Maps such as our face spheres here (``double paths" in \cite{Gr02}, conceived of as paths in $\widetilde{\Omega}X$) have domain $\Z^2$.  The vertices of $\widetilde{\Omega}X$ and $\Omega X$ are different. For instance, all constant maps at a vertex are treated as the same map in \cite{Gr02}, whereas we distinguish between constant paths of different lengths. On the other hand, each  edge path in $X$ corresponds naturally to a unique vertex of $\Omega X$, whereas the same sequence of vertices may be used for infinitely many vertices of $\widetilde{\Omega}X$, including ones with negative ``support." More significantly, perhaps, our $\Omega X$ has vertices of finite valency, given $X$ finite. By contrast,   $\widetilde{\Omega}X$ has vertices of infinite valency (for finite $X$).   
This ``locally finite" property of $\Omega X$ could play a significant role in any algorithmic developments from our work, which is a direction we hope to pursue (see \secref{sec: future}).  
 Fourth, and finally, we show a homotopy equivalence between $|\Omega X|$ and the based loop space of $|X|$, a result that has no counterpart in \cite{Gr02}. 
 
The paper is organized as follows.   
Since our simplicial complex $\Omega X$ involves edge paths and edge loops in $X$, we begin by discussing the latter topics and reviewing the edge group in \secref{sec: edge loops}. We start the section with a review of some basic notation and terminology surrounding the notion of contiguity. Whilst we will define terms and review notation as we go along, we do rely on a basic familiarity with standard notions of simplicial complexes and standard ways of operating with them.  We describe the simplicial complex $\Omega X$ in \secref{sec: Omega X}. The simplices of $\Omega X$ are described explicitly and directly in terms of the simplices of $X$. We give a number of explicit examples of simplices of $\Omega X$, including some $2$- and $3$-simplices of $\Omega X$ that play a role in the sequel. In \secref{sec: loops in Omega X} we discuss edge loops in $\Omega X$. Here we show two results that demonstrate that the combinatorial properties of $\Omega X$ compare well with the topological properties of $\Omega Y$, the based loop space of a topological space $Y$. \propref{prop: left-right homotopy} shows that an (edge) path from one vertex to another in $\Omega X$ corresponds to the endpoints---considered as based loops in $X$---being equivalent. In \thmref{thm: homogeneous components} we show the components of $\Omega X$ are homogeneous with respect to the edge group. We discuss there the ways in which these results compare with their topological counterparts.  In \secref{sec: face group} we review the construction of the face group from \cite{L-S-S} and prepare the way for the main result of \secref{sec: F(X) iso E(Omega X)}. In \thmref{thm: edge Omega X and face X} we show the isomorphism of groups $F(X) \cong E(\Omega X)$. We turn to the topological setting for the remainder of the paper. In \secref{sec: spatial realization} we relate the spatial realization of our $\Omega X$ with the spatial realization of a polyhedral complex constructed by Stone in \cite{St79}. Actually the main result here (\propref{prop: homotopy equivalent skeleta}) identifies certain approximations to the full spatial realizations.  In the final   \secref{sec: limits}, we show these approximations form compatible direct systems and by passing to homotopy colimits we obtain the homotopy equivalence between the spatial realization of our $\Omega X$ and the based loop space of the spatial realization of $X$ (\thmref{thm:r-equiv}).         
The paper concludes with the brief \secref{sec: future},  which  indicates developments we intend to pursue in subsequent work.

\section{Edge Paths and Edge Loops in a Simplicial Complex; The Edge Group}\label{sec: edge loops}

For a general overview of  material on  simplicial complexes, we refer to \cite{Koz08}.  By a simplicial complex here, we mean an abstract simpicial complex.  
Let $(X, x_0)$ be a based simplicial complex ($x_0$ is some vertex of $X$).   
An \emph{edge path of length $m$} in $X$ is an ordered  sequence $( v_0, v_1, \ldots, v_m )$ of vertices of $X$ with each adjacent pair $\{ v_i, v_{i+1}\}$ an edge (a $1$-simplex) or a repeated vertex (a $0$-simplex) of $X$, for  $i=0, \ldots, m-1$.   The edge path is an \emph{edge loop} (of length $m$) if $v_0 = v_m$ and is a \emph{based edge loop} if $v_0 = v_m = x_0$.   A typical edge loop (or path) in $X$ will be denoted by $l$. A simplicial complex $X$ is (edge-path) \emph{connected} if, given any two vertices $v$ and $v'$ of $X$ there is some edge path $( v, v_1, \ldots, v_{m-1}, v' )$ in $X$. 
 Since we will be discussing based loops and homotopy groups, we assume $X$ is a connected simplicial complex (or a connected component of a simplicial complex).

\begin{definition}\label{def: contiguous edge loops}
Edge paths $l = ( v_0, \ldots, v_i, \ldots, v_m )$ and $l' = ( v'_0, \ldots, v'_i, \ldots, v'_m )$ in $X$ are \emph{contiguous} if $\{ v_i, v_{i+1}, v'_i, v'_{i+1} \}$ is a simplex of $X$, for each $0 \leq i \leq m-1$.  Note that some of the vertices involved may be repeats. We write $l \sim l'$ for the reflexive, symmetric, but not-necessarily transitive relation of contiguity. Note that contiguity of edge paths entails they are of the same length as each other.
\end{definition}

The term contiguity is usually applied to simplicial maps, namely a vertex map of simplicial complexes that sends simplices to simplices, as follows.

\begin{definition}\label{def: contiguous maps}
Suppose $K$ and $X$ are simplicial complexes. Two simplicial maps $f, g \colon K \to X$ are \emph{contiguous} if $f(\sigma) \cup g(\sigma)$ is some simplex of $X$, for each simplex $\sigma$ of $K$.   We write $f \sim g$ for the reflexive, symmetric, but not-necessarily transitive relation of contiguity. Note that it is sufficient to check this contiguity condition only for those $\sigma$ that are the maximal simplices of $K$.  
\end{definition}

Contiguity generates an equivalence relation on the set of simplicial maps between (fixed) simplicial complexes. Namely, two simplicial maps $f, f' \colon K \to X$ are \emph{contiguity equivalent} if there is a sequence of contiguities
$$f \sim f_1 \sim \cdots \sim f_{n-1} \sim f'$$
for simplicial maps $f_j\colon K \to X$. 
We write $f \simeq f'$ to denote that $f$ and $f'$ are contiguity equivalent maps.

Now an edge path of length $m$ may be viewed as a simplicial map  $l \colon I_m \to X$, where $I_m = [0, m]_{\mathbb Z}$, the simplical complex with vertices the integers from $0$ to $m$ inclusive and edges given by pairs of consecutive integers.  The edge paths $l$ and $l'$ of \defref{def: contiguous edge loops}  are contiguous according to that definition exactly when, considered as maps $l, l' \colon I_m \to X$, they are contiguous according to \defref{def: contiguous maps}.  More generally, we will say two edge paths $l$ and $l'$ as in \defref{def: contiguous edge loops} are \emph{contiguity equivalent}, and write $l \simeq l'$, if they are contiguity equivalent considered as simplicial maps $l, l' \colon I_m \to X$.    

In the sequel, we will use whichever of these two points of view on edge paths is most convenient---edge path as a sequence of vertices or edge path as a simplicial map.  We will feel free to switch from one point of view to the other without comment.

\subsection{The edge group of a simplicial complex}
For $(X, x_0)$ a based simplicial complex,  the \emph{edge group} is the set of equivalence classes of all based edge loops (all lengths) under the  equivalence relation generated by the two types of (reflexive, symmetric) move:

(i)  $( x_0, \ldots, v_i, \ldots, x_0 ) \approx ( x_0, \ldots, v_i, v_i, \ldots, x_0 )$ (i.e., repeat a vertex or delete a repeated vertex);

(ii)   $( x_0, \ldots, v_{i-1}, v_i, v_{i+1}, \ldots, x_0 ) \approx ( x_0, \ldots, v_{i-1}, v'_{i}, v_{i+1}, \ldots, x_0 )$ when  $\{ v_{i-1}, v_i, v'_i \}$ and $\{ v_i, v'_i,  v_{i+1} \}$ are both simplices of $X$.  That is, the (same length) edge loops are contiguous as we defined it above. 

We refer to this equivalence relation as \emph{extension-contiguity equivalence} of edge loops and denote it using ``$\approx$."  We denote by $[l]$ the extension-contiguity equivalence class of a based loop $l$ in $X$. It is straightforward to show that this set of equivalence classes form a group, the \emph{edge group of $X$} (based at $x_0$), which we denote by $E(X, x_0)$. It is well-known that the edge group is isomorphic to the fundamental group of the spatial realization: we have an isomorphism $E(X, x_0) \cong \pi_1(|X|, x_0)$. We refer to \cite{Mau96} or \cite{Spa81} for an exposition of the edge group.  Here, we recall some details of the development that we use in the sequel.

The product in the edge group is  induced by \emph{concatenation} of loops. Namely, for $l = ( x_0, v_1,  \ldots,  v_{m-1}, x_0 )$ and $l' = ( x_0, v'_1, \ldots, v'_{n-1}, x_0 )$, their product is the loop of length $m+n+1$
$$l\cdot l' = ( x_0, v_1,  \ldots,  v_{m-1}, x_0,  x_0, v'_1, \ldots, v'_{n-1}, x_0 ).$$
One checks that this induces a well-defined, associative product on extension-contiguity classes of loops. The role of a two-sided identity element is played by $[(x_0)]$, the extension-contiguity equivalence class of the trivial loop of length zero.  We will write $\mathbf{x_0}$ for $(x_0)$.  The \emph{reverse} of a loop
$l = ( x_0, v_1,  \ldots,  v_{m-1}, x_0 )$ is the loop $\widetilde{l} = ( x_0, v_{m-1},  \ldots,  v_{1}, x_0 )$. 

\begin{lemma}\label{lem: reverse as inverse}
Let $l = ( x_0, v_1,  \ldots,  v_{m-1}, x_0 )$ be a loop in $X$ of length $m$. We have a contiguity equivalence (of loops of length $2m+1$)
$$\mathbf{x_0^{2m+1}} \sim L^1 \sim \cdots \sim L^{m-1} \sim \widetilde{l} \cdot l,$$
where $\mathbf{x_0^{2m+1}} = (x_0, x_0, \ldots, x_0)$ denotes the constant loop at $x_0$ of length $2m+1$ and $L^i$ is the loop of length $2m+1$
$$( x_0, v_1,  \ldots, v_{i-1}, v_{i}, v_{i}, \ldots, v_{i},  v_{i}, v_{i-1}, \ldots v_{1}, x_0).$$
\end{lemma}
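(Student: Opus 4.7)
The plan is to view each loop in the chain as a simplicial map $I_{2m+1} \to X$ and verify, position by position, that consecutive loops satisfy the contiguity condition of \defref{def: contiguous maps}: for every $j \in \{0, 1, \ldots, 2m\}$, the four-vertex set $\{f(j), f(j+1), g(j), g(j+1)\}$ must be contained in a simplex of $X$. The unifying observation is that in each such check the four vertices reduce to a subset of some adjacent pair $\{v_{k}, v_{k+1}\}$ of the original loop $l$, so the required simplex condition is already supplied by $l$ being an edge loop; no additional hypothesis on $X$ is ever invoked.

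First, for $\mathbf{x_0^{2m+1}} \sim L^1$ every four-vertex set lies in $\{x_0, v_1\}$, an edge of $l$. Next, for each intermediate step $L^i \sim L^{i+1}$ with $1 \leq i \leq m-2$, I would partition the positions $j$ into four regions: the common ascent (where $L^i$ and $L^{i+1}$ agree on the pattern $x_0, v_1, \ldots, v_{i-1}$), the transition where $L^{i+1}$ climbs one extra step from $v_i$ to $v_{i+1}$, the comparison between the two plateaus, and the symmetric common descent. In each region the four-vertex set reduces either to a single vertex or to one of the edges $\{v_{i-1}, v_i\}$, $\{v_i, v_{i+1}\}$ of $l$.

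The final step $L^{m-1} \sim \widetilde{l}\cdot l$ is the main technical point. Here $L^{m-1}$ traces a single ``mountain'' up to a four-vertex plateau at $v_{m-1}$, while $\widetilde{l}\cdot l$ traces a ``two-mountain'' shape with a two-vertex valley through $x_0$ at the center. The plateau length of four copies of $v_{m-1}$ is chosen precisely so that the central dip through $x_0$ and subsequent re-ascent in $\widetilde{l}\cdot l$ is covered at each position by a single edge of $l$. Bookkeeping here is what I expect to be the main obstacle: one must enumerate positions by their distance from the center and handle the symmetric subcases at each end, verifying that every four-vertex set collapses to an edge $\{v_{k}, v_{k+1}\}$ of $l$, and never produces something like $\{v_{k-1}, v_{k}, v_{k+1}\}$ that would demand a higher-dimensional simplex of $X$ not guaranteed by the hypotheses.
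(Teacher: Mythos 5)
Your proposal is correct and follows essentially the same route as the paper: both check the contiguity $L^i \sim L^{i+1}$ position by position and observe that every four-vertex set $\{L^i_j, L^i_{j+1}, L^{i+1}_j, L^{i+1}_{j+1}\}$ collapses to a subset of an adjacent pair $\{v_k, v_{k+1}\}$ of $l$ (the paper records this in a single three-case formula covering ascent, plateau, and descent, with your ``first'' and ``final'' steps just being the $i=0$ and $i=m-1$ instances of that same computation rather than separate arguments). The only slip is the claim that in each intermediate region the set is a vertex or one of $\{v_{i-1},v_i\}$, $\{v_i,v_{i+1}\}$ --- on the common ascent and descent it is $\{v_j, v_{j+1}\}$ for varying $j$ --- but your opening paragraph already states the correct general fact, so this is harmless.
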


\begin{proof}
If we write $L^i_j$ for the $j$th vertex of loop $L^i$ and likewise for $L^{i+1}$, then $L^i_j = L^{i+1}_j=v_j$ for $0 \leq j \leq i$ and $L^i_j = L^{i+1}_j=v_{2m+1 - j}$ for $2m+1-i \leq j \leq 2m+1$. Outside these ranges, we have $L^i_j = v_i$ and $L^{i+1}_j=v_{i+1}$.  Then
$$
\{  L^i_j, L^i_{j+1}, L^{i+1}_j, L^{i+1}_{j+1} \} = 
\begin{cases} 
\{ v_j, v_{j+1} \} & 0 \leq j \leq i\\
\{ v_i, v_{i+1} \} & i+1 \leq j \leq 2m-i\\
\{ v_{2m+1 - j}, v_{2m - j} \} & 2m+1-i \leq j \leq 2m+1
\end{cases}
$$
and in all cases the contiguity condition (of \defref{def: contiguous edge loops})  is satisfied.
\end{proof}

From \lemref{lem: reverse as inverse}, it follows that in the edge group we have $[l]^{-1} = [ \widetilde{l}]$. We also recall here the invariance of $E(X, x_0)$ under change of basepoint (within a connected component, if $X$ is disconnected).

\begin{lemma}\label{lem: change of basepoint}
Let $\eta = ( x_0, v_1,  \ldots,  v_{m-1}, y_0 )$ be an edge path in $X$ from $x_0$ to $y_0$.  Then the map from loops in $X$ based at $y_0$ to loops in $X$ based at $x_0$ defined by
$$l \mapsto \eta\cdot l \cdot \widetilde{\eta}$$
induces an isomorphism of edge groups
$$\Phi_\eta \colon E(X, y_0) \to E(X, x_0).$$
\end{lemma}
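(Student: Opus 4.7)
The plan is to follow the standard topological argument for change of basepoint, adapted to the combinatorial setting of extension-contiguity equivalence, leveraging \lemref{lem: reverse as inverse} as the key tool.

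First I would verify that the assignment $l \mapsto \eta \cdot l \cdot \widetilde{\eta}$ descends to a well-defined map on extension-contiguity classes. Given two loops $l, l'$ based at $y_0$ that differ by a single move of type (i) or (ii) (from the definition of the edge group), one checks directly that $\eta \cdot l \cdot \widetilde{\eta}$ and $\eta \cdot l' \cdot \widetilde{\eta}$ differ by the same move, applied in the same position (shifted by $m$). Concretely, for type (ii) contiguity moves, the prefix $\eta$ and suffix $\widetilde{\eta}$ contribute identical entries to both loops and are automatically contiguous with themselves, so the condition $\{v_{i-1}, v_i, v'_i, v_{i+1}\}$ being a simplex transfers without change.

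Next I would verify that $\Phi_\eta$ is a group homomorphism. The needed identity is
\[
\eta \cdot (l \cdot l') \cdot \widetilde{\eta} \;\approx\; (\eta \cdot l \cdot \widetilde{\eta}) \cdot (\eta \cdot l' \cdot \widetilde{\eta}).
\]
The right-hand side contains a subword $\widetilde{\eta} \cdot \eta$ in the middle, which is an edge loop at $y_0$. The key observation here is that \lemref{lem: reverse as inverse} proves $\widetilde{l}\cdot l \simeq \mathbf{x_0^{2m+1}}$ only using that $l$ is a sequence of vertices joined by edges of $X$; the identical argument applies with $\eta$ a path from $x_0$ to $y_0$ rather than a loop, yielding $\widetilde{\eta}\cdot\eta \simeq \mathbf{y_0^{2m+1}}$ as paths of length $2m+1$. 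Combined with the type (i) moves that delete strings of repeated vertices, this reduces the right-hand side to the left.

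For the inverse map, I would introduce $\Phi_{\widetilde{\eta}} \colon E(X, x_0) \to E(X, y_0)$ by $k \mapsto \widetilde{\eta} \cdot k \cdot \eta$. Applying the well-definedness argument a second time, this is a well-defined map on classes. The composites $\Phi_{\widetilde{\eta}} \circ \Phi_\eta$ and $\Phi_\eta \circ \Phi_{\widetilde{\eta}}$ send $[l]$ to $[\widetilde{\eta}\cdot\eta \cdot l \cdot \widetilde{\eta}\cdot\eta]$ and $[k]$ to $[\eta\cdot\widetilde{\eta} \cdot k \cdot \eta\cdot\widetilde{\eta}]$ respectively; in each case the path analogue of \lemref{lem: reverse as inverse} (applied twice) plus type (i) moves collapse the bracketing strings to a constant and then to nothing, recovering $[l]$ and $[k]$.

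The main obstacle is not conceptual but notational: careful bookkeeping of the repeated-vertex junctions (such as the doubled $y_0$ at the seam of $\eta$ and $\widetilde{\eta}$) when invoking type (i) moves, and confirming that the proof of \lemref{lem: reverse as inverse} carries over verbatim to paths rather than loops, since that proof never uses the equality $v_0 = v_m$. Once these technical checks are made, the homomorphism and inverse properties fall out immediately.
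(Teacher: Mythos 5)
Your proposal is correct and is precisely the standard change-of-basepoint argument: the paper itself declines to write it out, saying only that the proof is straightforward and deferring to Maunder and Spanier. Your filling-in of the details---well-definedness by transferring moves across the fixed prefix and suffix, the homomorphism property and invertibility via the path analogue of \lemref{lem: reverse as inverse} (whose proof indeed never uses that the endpoints coincide), spliced in relative to endpoints---is exactly the intended argument.
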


\begin{proof}
The proof is straightforward.  We refer to \cite{Mau96} or \cite{Spa81} for details.    
\end{proof}

The final ingredient involving edge groups that we review here concerns induced homomorphisms.  A simplicial map $f \colon X \to Y$ induces a homomorphism of edge groups
$$f_* \colon E(X, x_0) \to E\left( Y, f(x_0) \right),$$
defined by setting $f_*([l]) := [ f\circ l]$.    We check that this gives a well-defined homomorphism.  The details are straightforward and we omit them.  The next result says that contiguity equivalent maps induce the same homomorphism of edge groups, up to a change of basis isomorphism. 

\begin{lemma}\label{lem: contiguous induced hom}
Suppose we have a contiguity equivalence of simplicial maps
$$f_0 \sim f_1 \sim \cdots \sim f_{n-1} \sim f_n \colon X \to Y.$$
Let $\eta = \left( f_n(x_0), f_{n-1}(x_0), \ldots, f_{1}(x_0), f_0(x_0) \right)$ be the edge path in $Y$ from $f_n(x_0)$ to $f_0(x_0)$ that follows the basepoint $x_0$ of $X$ through the contiguity equivalence (in reverse order).  Then we have a commutative diagram of homomorphisms of edge groups 
$$
\xymatrix{
E(X, x_0) \ar[r]^{(f_0)_*} \ar[rd]_{(f_n)_*}  & E\left( Y, f_0(x_0) \right) \ar[d]^{\Phi_\eta}_{\cong} \\
  &    E\left( Y, f_n(x_0) \right)
  }
$$
Here, $\Phi_\eta$ denotes the change of basepoint isomorphism as in \lemref{lem: change of basepoint}.  
\end{lemma}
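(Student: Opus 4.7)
The plan is to reduce the statement to the single-contiguity case by induction on the length $n$ of the contiguity equivalence, then verify the single-contiguity case by hand. For the base case $n=0$, we have $f_0 = f_n$ and $\eta$ is the trivial path $(f_0(x_0))$; the change-of-basepoint isomorphism $\Phi_\eta$ along a trivial path is the identity, so the diagram commutes trivially. For the inductive step, we write $\eta = \eta_n \cdot \eta'$ where $\eta_n = (f_n(x_0), f_{n-1}(x_0))$ and $\eta' = (f_{n-1}(x_0), \ldots, f_0(x_0))$. By the inductive hypothesis applied to $f_0 \sim \cdots \sim f_{n-1}$, we have $(f_{n-1})_* = \Phi_{\eta'} \circ (f_0)_*$; combining this with the single-contiguity case $(f_n)_* = \Phi_{\eta_n} \circ (f_{n-1})_*$ yields $(f_n)_* = \Phi_{\eta_n} \circ \Phi_{\eta'} \circ (f_0)_*$, which equals $\Phi_\eta \circ (f_0)_*$ by a straightforward check that change of basepoint respects concatenation of paths.

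The heart of the proof, then, is the single-contiguity case: given $f \sim g \colon X \to Y$, and the edge path $\eta_1 = (g(x_0), f(x_0))$ (a valid edge path of length~1 in $Y$ since $\{f(x_0), g(x_0)\} = f(\{x_0\}) \cup g(\{x_0\})$ is a simplex of $Y$), we must show that for any based loop $l = (x_0, v_1, \ldots, v_{m-1}, x_0)$ in $X$,
\[
g \circ l \;\approx\; \eta_1 \cdot (f \circ l) \cdot \widetilde{\eta_1}
\]
as based loops at $g(x_0)$ in $Y$. The key combinatorial input is that since $f \sim g$, for each $1$-simplex or repeated vertex $\{v_i, v_{i+1}\}$ in the loop $l$, the set $\{f(v_i), f(v_{i+1}), g(v_i), g(v_{i+1})\}$ is a simplex of $Y$. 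This supplies all the simplices needed to perform the vertex-swap move (ii) at each position.

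Concretely, I would first use move~(i) to extend $g \circ l$ by repeating $g(x_0)$ at the beginning and end, producing a loop of the same length as the concatenation $\eta_1 \cdot (f \circ l) \cdot \widetilde{\eta_1}$, and then apply move~(ii) at each interior position $i = 1, \ldots, m-1$ to swap $g(v_i)$ for $f(v_i)$; the necessary $2$-simplices $\{g(v_{i-1}), g(v_i), f(v_i)\}$ and $\{g(v_i), f(v_i), f(v_{i+1})\}$ (or their degenerations in case of repeated vertices) are precisely among the simplices furnished by the contiguity of $f$ and $g$ applied to the edges of $l$. Iterating yields the concatenated loop. Finally, passing to equivalence classes gives $g_*([l]) = [g\circ l] = \Phi_{\eta_1}([f \circ l]) = \Phi_{\eta_1}(f_*([l]))$.

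The main obstacle is bookkeeping: correctly aligning lengths after extension, handling the positions near the two endpoints where $\eta_1$ and $\widetilde{\eta_1}$ meet $f \circ l$, and treating the edge cases where $f(x_0) = g(x_0)$ (so $\eta_1$ is degenerate) or where some $v_i = v_{i+1}$. These are each routine to check once the simplices supplied by $f \sim g$ are made explicit, but they require careful case analysis to present cleanly.
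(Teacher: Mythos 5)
Your proposal is correct and follows essentially the same route as the paper: both reduce to the single-contiguity case $f \sim g$, verify directly that $g\circ l \approx \eta_1\cdot (f\circ l)\cdot \widetilde{\eta_1}$ using the simplices $\{f(v_i), f(v_{i+1}), g(v_i), g(v_{i+1})\}$ supplied by the contiguity, and then compose the length-one change-of-basepoint isomorphisms, identifying the composite with $\Phi_\eta$ by deleting repeated vertices from the concatenated path. The only cosmetic difference is that you perform the replacement of $g(v_i)$ by $f(v_i)$ one vertex at a time via move (ii), whereas the paper replaces the whole loop in a single contiguity of same-length loops; these are equivalent by the paper's own identification of move (ii) with contiguity.
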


\begin{proof}
First note that $\eta$ is indeed an edge path in $Y$, since the contiguity $f_i \sim f_{i+1}$ entails that $\{ f_i(x_0), f_{i+1}(x_0) \}$ is a simplex of $Y$ for the $0$-simplex $\{ x_0 \}$ of $X$, each $i = 0, \ldots, n-1$.  Consider first the single contiguity $f_0 \sim f_1 \colon X \to Y$.  Let $e_1 = \left(f_1(x_0), f_0(x_0)\right)$ denote the edge path of length $1$ from $f_1(x_0)$ to $f(x_0)$.  We check that the diagram
$$
\xymatrix{
E(X, x_0) \ar[r]^{(f_0)_*} \ar[rd]_{(f_1)_*}  & E\left( Y, f_0(x_0) \right) \ar[d]^{\Phi_{e_1}}_{\cong} \\
  &    E\left( Y, f_1(x_0) \right)
  }
$$
commutes.  For a typical element $[l] \in E(X, x_0)$, with $l = (x_0, v_1, \ldots, v_{m-1}, x_0)$, we have
$$
\begin{aligned}
\Phi_{e_1}\circ &(f_0)_*\left( [l] \right) = \left[ e_1 \cdot f_0(l) \cdot \widetilde{e_1} \right]\\
&=  \left[ \left( f_1(x_0), f_0(x_0), f_0(x_0), f_0(v_1), \ldots, f_0(v_{m-1}), f_0(x_0), f_0(x_0), f_1(x_0) \right) \right]\\
&=  \left[ \left( f_1(x_0), f_1(x_0), f_1(x_0), f_1(v_1), \ldots, f_1(v_{m-1}), f_1(x_0), f_1(x_0), f_1(x_0) \right) \right].
\end{aligned}
$$ 
The  contiguity $f_0 \sim f_1$ implies that $\{ f_0(v_i), f_0(v_{i+1}, f_1(v_i), f_1(v_{i+1} \}$ is a simplex of $Y$ for each $i$, which gives the last equality above. 
After removing repeats from either end of the representative loop displayed, we see that $\Phi_{e_1}\circ (f_0)_*\left( [l] \right) = (f_1)_*( [l] )$.

Iterating this step, we obtain a commutative diagram
$$
\xymatrix{
 & &  E\left( Y, f_0(x_0) \right) \ar[d]^{\Phi_{e_1}}_{\cong} \\
 & &  E\left( Y, f_1(x_0) \right) \ar[d]^{\Phi_{e_2}}_{\cong}\\
E(X, x_0) \ar[rruu]^{(f_0)_*} \ar[rru]^(0.65){(f_1)_*} \ar[rrd]_{(f_n)_*}  & \vdots & \vdots \ar[d]^{\Phi_{e_n}}_{\cong}\\
& &  E\left( Y, f_n(x_0) \right) 
  }
$$
Here, each $e_i = (f_i(x_0), f_{i-1}(x_0))$ denotes the edge path of length $1$ from $f_i(x_0)$ to $f_{i-1}(x_0)$, for $i=1, \ldots, n$. But the composition of the change of basis isomorphisms agrees with the single change of basis isomorphism $\Phi_\eta$ on edge groups.  To see this, write
$$
\begin{aligned}
\widehat{\eta} &=  (e_n\cdot e_{n-1} \cdot \,\cdots\, \cdot e_1) \\
&= (f_n(x_0), f_{n-1}(x_0), f_{n-1}(x_0), f_{n-2}(x_0), \ldots, f_2(x_0), f_1(x_0), f_1(x_0), f_0(x_0)).
\end{aligned}
$$ 
Then, for a typical $[l] \in E\left( Y, f_0(x_0)\right)$,  we have
$$\Phi_{e_n} \circ \cdots \circ \Phi_{e_1}([l]) = [\widehat{\eta} \cdot l \cdot \widetilde{ (\widehat{\eta})}] =   [\eta \cdot l \cdot \widetilde{ \eta}] = \Phi_\eta([l]),$$ 
with the second equality following by removing repeated vertices from $\widehat{\eta}$ and its reverse. The result follows.  
\end{proof}

\subsection{Trivial extensions of edge loops}
Just as we phrased contiguity in terms of simplicial maps, following \defref{def: contiguous maps} above, we may give a variant (but equivalent) description of the edge group in terms of simplicial maps. We develop this description in this subsection and the next. Contiguity equivalence of based loops in $X$ of a certain length, considered as simplicial maps $I_m \to X$, gives an operation on based loops that corresponds to a move of type  (i) from above. To operate with edge loops of different lengths, we need a device that is not commonly discussed in the context of simplicial maps, namely \emph{(trivial) extensions}.  

\begin{definition}\label{def: extensions}
For each $m$, define simplicial maps $\alpha_i \colon I_{m+1} \to I_m$ for each $i = 0, \ldots, m$ as follows:
$$\alpha_i(s) = 
\begin{cases}
s & 0 \leq s \leq i \\
 & \\
s-1 & i+1 \leq s \leq m+1 \\
\end{cases}
$$
Now for a loop or path $l\colon I_m \to X$ in $X$, we refer to a composition $l \circ \alpha_i \colon I_{m+1} \to X$ as an \emph{extension} of $l$. It is the loop or path obtained from $l$ by repeating the $i$th vertex.  More generally, if $I = \{ i_1, \ldots, i_r \}$ is a sequence with $0 \leq i_t \leq m+t-1$ for each $1 \leq t \leq r$, we write 
$$\alpha_I := \alpha_{i_1} \circ \alpha_{i_2} \circ \cdots \circ \alpha_{i_r} \colon I_{m+r} \to I_m$$
and also refer to $l\circ \alpha_I\colon I_{m+r} \to X$ as an extension of $l$. It is the loop obtained from $l$ by repeating the $i_r$th vertex, then repeating the $i_{r-1}$st vertex of that extended loop, and so-on. If $I = \{ m, \ldots, m \}$ ($r$-times), then we write $\alpha^r_m$ for $\alpha_I = \alpha_m \circ \cdots \circ \alpha_m$. Then $l\circ \alpha^r_m \colon I_{m+r} \to X$ is the loop or path obtained from $l$ by repeating $r$-times the final vertex of $l$ (the basepoint $x_0$ in case $l$ is a based loop in $X$). We distinguish this case by referring to $l\circ \alpha^r_m$ as a \emph{trivial extension} of (the based loop) $l \colon I_m \to X$.
\end{definition}

Whereas repeating different vertices of a loop or path generally leads to technically different extended loops or paths, the following result means that for most of our purposes it is sufficient just to extend trivially (by repeating just the final vertex).

\begin{proposition}\label{prop: contiguity results} 
Using the notation and vocabulary from around \defref{def: contiguous maps} and \defref{def: extensions}, we have the following.
    \begin{itemize}
\item[(a)] Given pairs of contiguous simplicial maps $f \sim f' \colon X \to Y$ and $g \sim g' \colon Y \to Z$, their compositions are contiguous: we have $g\circ f \sim g' \circ f' \colon X \to Z$.
\item[(b)] Given pairs of contiguity equivalent simplicial maps $f \simeq f' \colon X \to Y$ and $g \simeq g' \colon Y \to Z$, their compositions are contiguity equivalent: we have $g\circ f \simeq g' \circ f' \colon X \to Z$.
\item[(c)] We have a contiguity equivalence $\alpha_i \simeq \alpha_j \colon I_{m+1} \to I_m$ for any $0 \leq i < j \leq m$.
\item[(d)] Suppose $\alpha_I \colon I_{m+r} \to I_m$ and  $\alpha_J \colon I_{m+r} \to I_m$ are any maps of the form
$$\alpha_I := \alpha_{i_1} \circ \cdots \circ \alpha_{i_r} \qquad \mathrm{and} \qquad \alpha_J := \alpha_{j_1} \circ \cdots \circ \alpha_{j_r},$$
and $l \colon I_m \to X$ is any loop or path in $X$. Then we have a contiguity equivalence of extensions $l\circ \alpha_I \simeq l \circ \alpha_J \colon I_{m+r} \to I_m$. 
    \end{itemize}
\end{proposition}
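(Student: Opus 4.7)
The plan is to prove the four parts in order, since each builds on the previous ones.

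For part (a), given any simplex $\sigma$ of $X$, the hypothesis $f \sim f'$ says that $\tau := f(\sigma) \cup f'(\sigma)$ is some simplex of $Y$. Applying $g \sim g'$ to this $\tau$ gives that $g(\tau) \cup g'(\tau)$ is a simplex of $Z$. Since $(g \circ f)(\sigma) \cup (g' \circ f')(\sigma)$ is contained in $g(\tau) \cup g'(\tau)$, and a subset of a simplex is a simplex, the required contiguity $g \circ f \sim g' \circ f'$ follows. Part (b) is then a straightforward chaining argument. Given contiguity equivalences $f = f_0 \sim f_1 \sim \cdots \sim f_k = f'$ and $g = g_0 \sim g_1 \sim \cdots \sim g_\ell = g'$, I would apply (a) with the reflexive contiguity $g \sim g$ to obtain $g \circ f_0 \sim g \circ f_1 \sim \cdots \sim g \circ f_k$; then apply (a) again with the reflexive contiguity $f' \sim f'$ to obtain $g_0 \circ f' \sim \cdots \sim g_\ell \circ f'$. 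Concatenating these two sequences yields $g \circ f \simeq g' \circ f'$.

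For part (c), it suffices to show a single-step contiguity $\alpha_i \sim \alpha_{i+1}$ for each $0 \leq i \leq m-1$ and then chain these together to reach $\alpha_j$. The two maps $\alpha_i, \alpha_{i+1} \colon I_{m+1} \to I_m$ agree on every vertex except $s = i+1$, where $\alpha_i(i+1) = i$ and $\alpha_{i+1}(i+1) = i+1$. The maximal simplices of $I_{m+1}$ are the edges $\{s, s+1\}$, and one directly checks that for each such edge the union $\alpha_i(\{s,s+1\}) \cup \alpha_{i+1}(\{s,s+1\})$ is a simplex of $I_m$. All cases yield either $\{s-1, s\}$ or $\{s, s+1\}$, except $s = i$ and $s = i+1$, each of which yields precisely the simplex $\{i, i+1\}$ of $I_m$; this is the only mildly subtle check in the whole proposition.

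For part (d), the strategy is first to establish $\alpha_I \simeq \alpha_J$ as simplicial maps $I_{m+r} \to I_m$, and then to apply (b) with the reflexive contiguity equivalence $l \simeq l$ to obtain the desired $l \circ \alpha_I \simeq l \circ \alpha_J$. To show $\alpha_I \simeq \alpha_J$, I would replace the component maps one at a time. For each $t = 1, \ldots, r$, the two component maps $\alpha_{i_t}$ and $\alpha_{j_t}$ have the same source and target $I_{m+t} \to I_{m+t-1}$, so by (c) they are contiguity equivalent. Using (b) with the unchanged component maps serving as reflexive contiguity equivalences, we swap $\alpha_{i_t}$ for $\alpha_{j_t}$ inside the composition without disturbing any of the other factors. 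After $r$ such swaps the composition $\alpha_I$ has been transformed into $\alpha_J$. The only real obstacle is the boundary case analysis in (c) at $s = i$ and $s = i+1$; once that is dispatched, everything else is formal, functoriality-type reasoning about the contiguity relation.
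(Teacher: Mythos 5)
Your proposal is correct and follows essentially the same route as the paper: the same subset argument for (a), the same chaining for (b), the same single-step check $\alpha_i \sim \alpha_{i+1}$ on edges $\{s,s+1\}$ for (c), and for (d) you simply spell out the "swap one factor at a time" argument that the paper compresses into "follows directly from (b) and (c)."
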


\begin{proof}
(a) Suppose $\sigma$ is a simplex of $X$.  Because $f \sim f'$, we have $f(\sigma) \cup f'(\sigma)$ contained in some simplex $\sigma'$ of $Y$. Then we have $g\circ f(\sigma) \cup g'\circ f'(\sigma) \subseteq g(\sigma') \cup g'(\sigma')$, which is contained in some simplex $\sigma''$ of $Z$, as $g \sim g'$. Thus, we have $g\circ f \sim g' \circ f'$.  

(b) From part (a) we may write a contiguity equivalence of the form
$$g\circ f \sim g\circ f_1 \sim \cdots \sim g\circ f' \sim g_1\circ f' \sim \cdots \sim g'\circ f',$$
where $f \sim f_1 \sim \cdots \sim f'$ and $g \sim g_1 \sim \cdots \sim g'$ are contiguity equivalences $f \simeq f'$ and $g \simeq g'$.

(c) It is sufficient to show that we have $\alpha_i \sim \alpha_{i+1}$ for each $0 \leq i \leq m-1$. To this end, consider the typical simplex $\sigma = 
\{ s, s+1 \}$ of $I_{m+1}$, some  $0 \leq s \leq m$. Then a direct check gives that
$$\begin{aligned}
    \alpha_i(\sigma) \cup \alpha_{i+1}(\sigma) & = \left\{ \alpha_i(s), \alpha_i(s+1), \alpha_{i+1}(s), \alpha_{i+1}(s+1)\right\}\\
    &= \begin{cases}
        \{ s, s+1, s, s+1 \} = \{s, s+1\} & 0 \leq s \leq i-1 \\
        \{ i, i+1, i, i \} = \{i, i+1\} & s = i \\
         \{ i, i+1, i+1, i+1 \} = \{i, i+1\} & s = i+1 \\
         \{ s-1, s, s-1, s \} = \{s-1, s\} & i+2 \leq s \leq m. 
    \end{cases}
\end{aligned}$$
In all cases, we see that $\alpha_i(\sigma) \cup \alpha_{i+1}(\sigma)$ is a simplex of $I_m$. Namely, we have $\alpha_i \sim \alpha_{i+1}$.

(d) This follows directly from parts (b) and (c).
\end{proof}

\subsection{Extension contiguity of loops as maps; the edge group (bis)}

Combining contiguity with (trivial) extensions generates an equivalence relation on the set of all based loops considered as simplicial maps $l \colon I_{m} \to X$, for all lengths of interval $m \geq 0$, leading to the variant description of the edge group in terms of simplicial maps.   Specifically, we say that two simplicial maps $l \colon I_{m} \to X$ and $l' \colon I_{n} \to X$ are \emph{extension-contiguity equivalent} if there are extensions  $l\circ \alpha_{I}, l'\circ \alpha_J \colon I_{r} \to X$ (to some common length $r \geq m, n$) with $l\circ \alpha_{I}$ and $l'\circ \alpha_J$ contiguity equivalent.

The process by which one loop is deformed by an extension-contiguity equivalence generally consists of a sequence of extensions (repeating a vertex), contractions (deleting a repeated vertex) and contiguities (of loops of the same length), in any order. 
It is implicit in the preceding paragraph that we may achieve this result by extending each loop first, then using a contiguity equivalence on their extensions, without shuffling further extensions amongst the steps of the contiguity equivalence. This is justified by the following result, in which we show that operating with extension-contiguity equivalence of loops may be reduced to \emph{trivially} extending the loops and then operating with a contiguity equivalence between the extensions.

\begin{lemma}\label{lem: expand then contract}
    Suppose $l \colon (I_{m}, \{0, m\}) \to (X, x_0)$ and $l' \colon (I_{n}, \{0, n\}) \to (X, x_0)$ are edge loops in $X$ of lengths $m$ and $n$. If $l$ and $l'$ are extension-contiguity equivalent, then we may obtain $l'$ from $l$ by the steps
    $$l \approx l\circ \alpha^{M-m}_m \simeq l'\circ \alpha^{M-n}_n \approx l',$$
    with the first step repeating the final vertex $x_0$ up to some length $M$, the second step a contiguity equivalence of loops of length $M$, and the third step deleting repeats of the final vertex $x_0$ to contract to the length of $l'$.
\end{lemma}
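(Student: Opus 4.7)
The plan is to reduce the claim to Proposition \ref{prop: contiguity results}(d), which already tells us that \emph{any} two extensions of a loop to a common length are contiguity equivalent to one another. So while the general definition of extension-contiguity equivalence allows the intermediate extensions $\alpha_I, \alpha_J$ to insert repeats at arbitrary positions, part (d) lets us replace them by the purely terminal extensions $\alpha_m^{M-m}$ and $\alpha_n^{M-n}$ at no cost.

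First, I would unpack the hypothesis: by the definition of extension-contiguity equivalence of loops viewed as simplicial maps, there exist $\alpha_I \colon I_M \to I_m$ and $\alpha_J \colon I_M \to I_n$ (for some common length $M \geq m, n$) such that $l\circ \alpha_I \simeq l'\circ \alpha_J \colon I_M \to X$. Second, I would observe that $\alpha_m^{M-m} \colon I_M \to I_m$ is itself a composition $\alpha_{i_1} \circ \cdots \circ \alpha_{i_{M-m}}$ (with each $i_t = m$), and similarly for $\alpha_n^{M-n}$. So Proposition \ref{prop: contiguity results}(d) applies to the pairs $(\alpha_I, \alpha_m^{M-m})$ and $(\alpha_J, \alpha_n^{M-n})$, yielding
\[
l \circ \alpha_m^{M-m} \;\simeq\; l \circ \alpha_I \qquad \text{and} \qquad l' \circ \alpha_n^{M-n} \;\simeq\; l' \circ \alpha_J.
\]
Third, stringing these together with the given equivalence $l\circ \alpha_I \simeq l'\circ \alpha_J$, transitivity of contiguity equivalence produces the middle step of the claim,
\[
l \circ \alpha_m^{M-m} \;\simeq\; l' \circ \alpha_n^{M-n},
\]
as maps $I_M \to X$.

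Finally, I would note that the outer two steps are elementary. The map $\alpha_m^{M-m}$ sends $0, \ldots, m$ to themselves and all of $m+1, \ldots, M$ to $m$, so $l\circ \alpha_m^{M-m}$ is exactly $l$ with its terminal vertex $x_0$ repeated $M-m$ times; this passage $l \approx l\circ \alpha_m^{M-m}$ is therefore achieved by $M-m$ successive applications of the type (i) move (repeating the last vertex). The step $l'\circ \alpha_n^{M-n} \approx l'$ is the symmetric (delete-a-repeat) version of the same, contracting $M-n$ trailing copies of $x_0$. I expect no real obstacle here; the only care needed is to verify that both $\alpha_I$ and $\alpha_m^{M-m}$ fit the hypotheses of Proposition \ref{prop: contiguity results}(d) (that is, that they are compositions of the $\alpha_i$'s of the indicated form into the same $I_m$), which is immediate from the definitions.
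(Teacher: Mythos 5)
Your reduction via \propref{prop: contiguity results}(d) is correct as far as it goes, and it is in fact one ingredient of the paper's own argument: once one has \emph{some} pair of extensions with $l\circ\alpha_I \simeq l'\circ\alpha_J$ at a common length $M$, part (d) lets you trade $\alpha_I$ and $\alpha_J$ for the trivial extensions $\alpha_m^{M-m}$ and $\alpha_n^{M-n}$, and the outer steps $l \approx l\circ\alpha_m^{M-m}$ and $l'\circ\alpha_n^{M-n} \approx l'$ are indeed just iterated applications of move (i).

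The gap is in your first step, where you ``unpack the hypothesis'' as already supplying such a pair $\alpha_I, \alpha_J$. The relation this lemma is meant to control --- and the one in force wherever it is applied, e.g.\ in the proof of \thmref{thm: edge Omega X and face X}, where the hypothesis comes from triviality of a class in the edge group --- is the equivalence relation \emph{generated} by extensions, contractions, and contiguities interleaved in any order. A chain such as $l \sim l_1 \approx l_1\circ\alpha_i \sim l_2 \approx \cdots$ does not on its face produce a single pair of extensions of $l$ and of $l'$ whose results are contiguity equivalent; establishing that it does is the entire content of the paper's proof, which carries out a reordering: a contiguity followed by an extension, $l \sim l' \approx l'\circ\alpha_i$, is replaced by $l \approx l\circ\alpha_m \simeq l'\circ\alpha_i$ (using that $l \sim l'$ forces $l\circ\alpha_i \sim l'\circ\alpha_i$, together with part (d)); a contraction followed by a contiguity is commuted similarly; and a contraction followed by an extension, $l\circ\alpha_i \approx l \approx l\circ\alpha_j$, collapses to a single contiguity equivalence $l\circ\alpha_i \simeq l\circ\alpha_j$, again by part (d). Your argument presupposes the outcome of this reordering, so as written it proves the lemma only under a strictly narrower reading of ``extension-contiguity equivalent'' than the one the paper needs; adding the three commutation steps would close the gap.
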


\begin{proof}
    We may re-order the extensions, contractions and contiguities involved in the extension-contiguity, with suitable adjustments, so that all extensions occur first, followed by all contiguities, followed by all contractions.  To see this, suppose we have a contiguity followed by an extension
    $$l \sim l' \approx l'\circ \alpha_i.$$
    Write $l = ( x_0, v_1, \ldots, v_{s-1}, x_0 )$ and $l' = ( x_0, v'_1, \ldots, v'_{s-1}, x_0 )$.  Then the contiguity entails that $\{ v_i, v_{i+1}, v'_i, v'_{i-1} \}$ is a simplex of $X$ for each $i$.  Hence, we have a contiguity 
    $$l\circ \alpha_i = ( x_0, v_1, \ldots, v_i, v_i, \ldots, v_{s-1}, x_0 ) \sim ( x_0, v'_1, \ldots, v'_i, v'_i, \ldots, v'_{s-1}, x_0 ) = l'\circ \alpha_i$$
Part (d) of \propref{prop: contiguity results} gives a contiguity equivalence $l\circ \alpha_0 \simeq l\circ \alpha_i$, so we may obtain the extension contiguity equivalence $l \approx l'\circ \alpha_i$ in the steps 
$$l \approx l\circ \alpha_m \simeq l'\circ \alpha_i.$$
A similar discussion shows that a deletion of a repeated vertex followed by a contiguity may be replaced by a contiguity followed by a deletion of a repeated vertex. In symbols, we may replace the steps $l\circ \alpha_i \approx l \sim l'$ by $l\circ \alpha_i \sim  l'\circ \alpha_i \approx l'$, and apply part (d) of \propref{prop: contiguity results} to use the steps  $l\circ \alpha_i \simeq  l'\circ \alpha_m \approx l'$ instead.
Finally, in moving all extensions to the left and all contractions to the right in the sequence of steps, suppose we encounter an occurrence of a deletion followed by an extension.  Then the steps
$$l\circ \alpha_i \approx l \approx l\circ \alpha_j$$
may be replaced with a contiguity equivalence (between loops of the same length) $l\circ \alpha_i \simeq  l\circ \alpha_j$, once again by part (d) of \propref{prop: contiguity results}.
\end{proof}

From the discussion above, it follows that we may view the edge group as consisting of equivalence classes of suitable simplicial maps of intervals to $X$, under the equivalence relation of extension-contiguity equivalence. The simplicial maps are based loops of all lengths and the contiguities must preserve the endpoints. Furthermore, it is sufficient to use trivial extensions in conjunction with contiguities when operating within an extension-contiguity class of based loops.  This viewpoint on the edge group is transparently equivalent to the (usual) one given earlier. We will free to switch between the two whenever convenient.  

We close this section by giving a third notational device that is convenient for representing loops or paths and contiguities amongst such, and also foreshadows our constructions in the sequel. (Extension-)contiguity equivalence of loops (or of their equivalent simplicial maps) may be represented ``array-style" as follows.

Suppose we have a contiguity equivalence
$$l \sim l^1 \sim \cdots \sim l^{n-1} \sim l'$$
between loops $l$ and $l'$ of length $m$ (recall that contiguity is a relation between loops of the same length only).  Writing each loop in the sequence as a row of $m+1$ vertices of $X$, we obtain the $(m+1)\times(n+1)$ matrix of vertices of $X$
\[
\left[ \begin{array}{c}
l' \\
l^{n-1}  \\
\vdots\\
l^1  \\
l  
\end{array}\right]
\qquad = \qquad
\left[\begin{array}{ccccc}
 x_0  & v'_1 &  \cdots &  v'_{m-1} &  x_0 \\
 x_0  & v^{n-1}_1 &  \cdots &  v^{n-1}_{m-1} &  x_0 \\
 \vdots  & \vdots &  \cdots & \vdots &  \vdots \\
x_0  & v^1_1 &  \cdots &  v^1_{m-1} &  x_0 \\
x_0  & v_1 &  \cdots &  v_{m-1} &  x_0 \\
\end{array}\right].
\]  
The contiguity condition on adjacent loops in the sequence is that, in adjacent rows of this matrix, we have 
$$\{ v^j_i, v^j_{i+1}, v^{j+1}_i, v^{j+1}_{i+1} \}$$
a simplex of $X$, for each $0 \leq i \leq m$ and $0 \leq j \leq n$. 

Now suppose we have extension-contiguity equivalent loops $l$ of length $p$ and $l'$ of length $q$.  This means that we have  trivial extensions $l\circ \alpha_p^{m-p}$ and $l'\circ \alpha_q^{m-q}$ to some common length $m \geq \mathrm{max}\{p, q\}$ and  a contiguity equivalence
$$l\circ \alpha_p^{m-p} \sim l^1 \sim \cdots \sim l^{n-1} \sim l'\circ \alpha_q^{m-q}$$
that may be represented array-style as before. The only difference here is that our matrix now looks like (e.g. if $q < p$)
\[
\left[ \begin{array}{c}
l'\circ \alpha_q^{m-q} \\
l^{n-1}  \\
\vdots\\
l^1  \\
l\circ \alpha_p^{m-p}  
\end{array}\right]
\qquad = \qquad
\left[\begin{array}{ccccccccccc}
 x_0  & v'_1 &  \cdots &  v'_{q-1} &  x_0 & \cdots & x_0 &x_0 & \cdots& x_0& x_0\\
 x_0  & v^{n-1}_1 & \cdots  & v^{n-1}_{q-1}&v^{n-1}_{q} & \cdots & v^{n-1}_{p-1} & v^{n-1}_{p}& \cdots & v^{n-1}_{m-1} &  x_0 \\
 \vdots  & & \vdots & \cdots & \cdots & & \vdots &\vdots &  \vdots \\
 x_0  & v^{1}_1 & \cdots  & v^{1}_{q-1}&v^{1}_{q} & \cdots & v^{1}_{p-1} & v^{1}_{p}& \cdots & v^{1}_{m-1} &  x_0 \\
x_0  & v_1 & \cdots  & v_{q-1}&v_{q} & \cdots & v_{p-1} & x_0& \cdots & x_0 &  x_0 \\
\end{array}\right],
\]  
in which the bottom and top rows, and some of the intermediate rows, have been filled with repeats of $x_0$ to length $m+1$. The contiguity condition here is again that, in adjacent rows of this matrix, we have 
$$\{ v^j_i, v^j_{i+1}, v^{j+1}_i, v^{j+1}_{i+1} \}$$
a simplex of $X$, for each $0 \leq i \leq m$ and $0 \leq j \leq n$. As we work towards the right-hand end of adjacent rows, this condition may reduce---perhaps to a triviality---due to the more frequent repeats of $x_0$ in that part of the matrix.

\section{The simplicial complex $\Omega X$}\label{sec: Omega X}

Given a based simplicial complex $(X, x_0)$, we describe the vertices and simplices of a simplicial complex  $\Omega X$.  Notice that $\Omega X$ will always be an infinite simplicial complex and usually will also be disconnected. 

\textbf{Vertices of $\Omega X$}  are the edge loops in $X$ of length $m$ based at $x_0$, for all $m \geq 0$.   Recall that we write a based  edge loop $l$ of length $m$ as a sequence of vertices
$$l  = (x_0=v_0, v_1, \ldots, v_{m-1}, v_m=x_0),$$
where each $v_i$ is a vertex of $X$ and adjacent pairs $\{ v_i, v_{i+1}\}$ are simplices of $X$ (we allow repeats). By the based loop of length $0$ we mean the singleton $\mathbf{x_0} = ( x_0)$.  The only based loop of length $1$ is $\mathbf{x_0^1} = (x_0, x_0 )$. We write $\mathbf{x^m_0}$ for the edge loop in $X$ of length $m$ that consists of repeats of $x_0$.

\textbf{Simplices of $\Omega X$} are defined as follows: 
Suppose $\sigma = \{ l^0, l^1, \ldots, l^n \}$ is a set of based edge loops in $X$, with at least one of length exactly $m$ and the remainder either of length $m$ or of length $m-1$.  Write each edge loop as
$$l^j = ( x_0 =  v^j_0,  v^j_1, \ldots, v^j_{m-1}, v^j_m = x_0)$$
for each $j = 0, \ldots, n$ (if the length of $l^j$ is $m-1$, then $v^j_{m-1} = x_0$ and there is no $v^j_m$ in this case).  Then $\sigma$ is a simplex of $\Omega X$ if we have
$$\{ v^0_i, v^1_i, \ldots, v^n_i \} \cup \{ v^0_{i+1}, v^1_{i+1}, \ldots, v^n_{i+1} \}$$
a simplex of $X$ for each $i=0, \ldots, m-2$ and   
$$\{ v^0_{m-1}, v^1_{m-1}, \ldots, v^n_{m-1} \} \cup \{ x_0 \}$$
a simplex of $X$.  Then $\sigma$ is an $n$-simplex of $\Omega X$ if all the $l^j$ are distinct as edge loops. 

We will take $\mathbf{x_0}$, the edge loop of length zero, as the basepoint of $\Omega X$.  When we refer to the edge group of $\Omega X$, we intend the edge group of the connected component of $\Omega X$ that contains this basepoint.

\begin{remark}\label{rem: simplices as arrays}
We may represent a simplex of $\Omega X$ in the following way.   As above, suppose we have  $\sigma = \{ l^0, l^1, \ldots, l^n \}$  a set of edge loops in $X$, with at least one of length exactly $m$ and the remainder either of length $m$ or of length $m-1$.  Then write 
$$(l^j)_m = \begin{cases} l^j & \text{if $l^j$ is of length $m$}   \\
& \\
x_0, v^j_1, \ldots, v^j_{m-2}, x_0, x_0  & \text{if $l^j$ is of length $m-1$}. \end{cases}$$
In other words, for the  $l^j$ of length $m-1$, trivially extend each one  to a loop of length $m$ by adding a repeat of $x_0$ at the end.  This has the effect of ``same-sizing" the edge loops of $\sigma$ so they may be displayed as an $(m+1)\times (n+1)$ matrix or grid of vertices of $X$:
\[
\left[ \begin{array}{c}
(l^n)_m  \\
(l^{n-1})_m  \\
\vdots\\
(l^1)_m  \\
(l^{0})_m  
\end{array}\right]
\qquad = \qquad
\left[\begin{array}{ccccc}
 v^n_0 = x_0  & v^n_1 &  \cdots &  v^n_{m-1} &  x_0 \\
 v^{n-1}_0 = x_0  & v^{n-1}_1 &  \cdots &  v^{n-1}_{m-1} &  x_0 \\
 \vdots  & \vdots &  \cdots & \vdots &  \vdots \\
 v^1_0 = x_0  & v^1_1 &  \cdots &  v^1_{m-1} &  x_0 \\
 v^{0}_0 = x_0  & v^{0}_1 &  \cdots &  v^{0}_{m-1} &  x_0 \\
\end{array}\right]
\]  
in which the rows give the edge loops in $X$ (possibly after a trivial extension).  The definition of a simplex of $\Omega X$ above entails that, for $\sigma$ to be a simplex, the union of the vertices from any two adjacent columns must be a simplex of $X$ (not just the vertices from each column individually).  In this way, we may write the matrix of vertices involved column-wise as
$$\left[  \begin{array}{c|c|c|c|c}  \sigma_0 & \sigma_1 & \cdots & \sigma_{m-1} & \sigma_m \end{array} \right]$$   
with the vertices of the typical column 
\[
\sigma_i \qquad = \qquad 
\left[ \begin{array}{c}
v^n_i  \\
v^{n-1}_i  \\
\vdots\\
v^1_i  \\
v^0_i  
\end{array}\right]
\]
forming a simplex $\sigma_i$ of $X$.  The condition for $\sigma$ to be a simplex, then, is that we have
$\sigma_i \cup \sigma_{i+1}$ a simplex of $X$, for $0 \leq i \leq m-1$.  
\end{remark}

\begin{remark}\label{rem: three-simplex}
Edges in $\Omega X$ may join two vertices $l$ and $l'$ that correspond to edge loops of the same length as each other.  In this case, the edge loops $l$ and $l'$ are contiguous as in \defref{def: contiguous edge loops}.  Edges in $\Omega X$ may also join two vertices $l$ and $l'$ that correspond to edge loops whose lengths differ by one from each other. In this case, say $l$ is an edge loop of length $m$ and $l'$ an edge loop of length $m-1$.  Then an edge in $\Omega X$ joins $l$ and $l'$ when we have a contiguity $l \sim l'\circ \alpha_{m-1}$.  Notice that, if we have a contiguity $l \sim l'$ of edge loops of length $m$, then $\{ l, l', l\circ \alpha_m, l'\circ \alpha_m \}$ is a $3$-simplex of $\Omega X$.        
\end{remark}

We generalize the final comment of the above as follows. In the next result and in the sequel, we will write $\overline{l}$ for the trivial extension $l\circ \alpha_m$ where $l$ is a loop of length $m$.  

\begin{lemma}\label{lem: extension of simplex}
Let $\sigma$ be an $n$-simplex of $\Omega X$
$$\sigma = \{ l^0, l^1, \ldots, l^n \}$$
with each $l^j$ a loop of (exactly) length $m$ in $X$. Let 
$$\overline{\sigma} := \{ \overline{l^0}, \overline{l^1}, \ldots, \overline{l^n} \}$$
be the set of vertices of $\Omega X$ given by trivially extending each vertex of $\sigma$. Then
$$\sigma \cup \overline{\sigma} =\{ l^0, l^1, \ldots, l^n, \overline{l^0}, \overline{l^1}, \ldots, \overline{l^n} \}$$
is a $(2n+1)$-simplex of $\Omega X$.  In particular, $\overline{\sigma}$ is an $n$-simplex of $\Omega X$.
\end{lemma}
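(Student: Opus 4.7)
The plan is to verify directly, using the array representation of Remark~\ref{rem: simplices as arrays}, both that $\sigma \cup \overline{\sigma}$ satisfies the simplex condition from the definition of $\Omega X$ and that all $2n+2$ listed vertices are distinct. The crucial observation is that $\overline{l^j} = l^j\circ \alpha_m$, read as a length-$(m+1)$ sequence, is exactly $(x_0, v^j_1, \ldots, v^j_{m-1}, x_0, x_0)$, which is also the same-sized representation of the length-$m$ loop $l^j$ when it appears in a simplex whose maximum loop length is $m+1$. So, although $l^j$ and $\overline{l^j}$ are distinct as vertices of $\Omega X$, they contribute identical rows to the array for $\sigma \cup \overline{\sigma}$.

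First I would write down the column sets $\tau_0, \ldots, \tau_{m+1}$ of the matrix for $\sigma \cup \overline{\sigma}$. Since each pair of rows coming from $l^j$ and $\overline{l^j}$ duplicates, we have $\tau_i = \sigma_i$ for $0 \leq i \leq m$ (where $\sigma_0, \ldots, \sigma_m$ are the columns of the matrix for $\sigma$) and $\tau_{m+1} = \{x_0\}$. The simplex condition of $\Omega X$ requires $\tau_i \cup \tau_{i+1}$ to be a simplex of $X$ for each $0 \leq i \leq m$. For $0 \leq i \leq m-1$ this condition is word-for-word the condition already satisfied by $\sigma$, and for $i=m$ it is the trivial $\{x_0\} \cup \{x_0\} = \{x_0\}$.

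Next I would confirm that the $2n+2$ vertices are pairwise distinct: the $l^j$ are distinct because $\sigma$ is an $n$-simplex; the $\overline{l^j}$ are distinct because their underlying sequences recover the $l^j$; and each $l^j$ has length $m$ while each $\overline{l^k}$ has length $m+1$, so the two collections are disjoint as vertices of $\Omega X$. This completes the argument that $\sigma \cup \overline{\sigma}$ is a genuine $(2n+1)$-simplex. The ``in particular'' claim that $\overline{\sigma}$ is an $n$-simplex then falls out either as a face of $\sigma \cup \overline{\sigma}$, or directly by repeating the column analysis on just the $\overline{l^j}$ rows (whose matrix has the same columns as above). I do not anticipate a substantial obstacle here: the content of the proof is the single observation that trivially extending same-sized rows produces duplicate rows, so the simplex condition for $\sigma \cup \overline{\sigma}$ collapses to that for $\sigma$ plus one trivial extra column of $x_0$'s. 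The only care required is keeping the column indices aligned as the maximum loop length increases from $m$ to $m+1$.
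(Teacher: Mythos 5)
Your proof is correct and follows essentially the same route as the paper's: write $\sigma\cup\overline{\sigma}$ array-style after same-sizing to length $m+1$, observe that $(l^j)_{m+1}$ and $\overline{l^j}$ give identical rows, and conclude that the column condition for $\sigma\cup\overline{\sigma}$ reduces to the one already satisfied by $\sigma$ (plus a trivial final column of $x_0$'s). The extra checks you include (distinctness of the $2n+2$ vertices and the explicit column bookkeeping) are fine and only make the argument slightly more detailed than the paper's.
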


\begin{proof}
Write out $\sigma \cup \overline{\sigma}$ array-wise as in \remref{rem: simplices as arrays} above, same-sizing the loops involved to be of length $m+1$.  Because we have $\overline{l^{j}} = (l^{0})_{m+1}$ for each $j$, this results in the following double matrix: 
\[
\sigma \cup \overline{\sigma} \qquad = \qquad
\left[ \begin{array}{c}
\overline{l^n}\\
\vdots\\
\overline{l^{0}}\\  
(l^n)_{m+1}  \\
\vdots\\
(l^{0})_{m+1}  
\end{array}\right]
\qquad = \qquad
\left[\begin{array}{cccccc}
 x_0  & v^n_1 &  \cdots &  v^n_{m-1} &  x_0 & x_0 \\
  \vdots  & \vdots &  \cdots & \vdots &  \vdots \\
 x_0  & v^{0}_1 &  \cdots &  v^{0}_{m-1} &  x_0 & x_0\\
  x_0  & v^n_1 &  \cdots &  v^n_{m-1} &  x_0 & x_0 \\
  \vdots  & \vdots &  \cdots & \vdots &  \vdots \\
 x_0  & v^{0}_1 &  \cdots &  v^{0}_{m-1} &  x_0 & x_0\\
\end{array}\right]
\]  
Then the simplex condition as expressed in \remref{rem: simplices as arrays} for $\sigma \cup \overline{\sigma}$ is satisfied because $\sigma$ is a simplex of $\Omega X$, and so satisfies that condition. 
\end{proof}

To operate with the edge group of $\Omega X$, it is sufficient to know the $2$-skeleton.  It is also convenient in some situations to know the $3$-skeleton so that one may check the contiguity condition across disjoint edges of a $3$-simplex. In the next result, we record some specific instances of $2$- and $3$-simplices of $\Omega X$.   

\begin{lemma}\label{lem: 3-simplex extensions}
With the simplices of $\Omega X$ as defined above, we have:
\begin{enumerate}
    \item[(a)] If $l \sim l'$ are two (same-length) contiguous loops in $X$, then $\{l, l', \overline{l}, \overline{l'} \}$ is a simplex of $\Omega X$;
    \item[(b)] If $\{l, l'\}$ is a simplex (an edge) of $\Omega X$ with $l$ of length $m$ and $l'$ of length $m-1$ for some $m$,  then $\{l, l', \overline{l'} \}$ is a simplex of $\Omega X$;
    \item[(c)] If $l \sim l'$ are two contiguous loops of length $m$ in $X$, then $$\{l\circ\alpha_i, l'\circ\alpha_i, l\circ\alpha_{i+1}, l'\circ\alpha_{i+1} \}$$ is a simplex of $\Omega X$, for each $0 \leq i \leq m-1$; 
    \item[(d)] If $l_1 \sim l_1'$ are two contiguous loops (same length $m$) and $\{l_2,  l'_2\}$ is a simplex (an edge) of $\Omega X$ with $l_2$ of length $n$ and $l'_2$ of length either $n$ or $n-1$ for some $n$, then 
    $$\{ l_1\cdot l_2,l_1\cdot l'_2, l'_1\cdot l_2, l'_1\cdot l'_2 \}$$
    is a simplex of $\Omega X$. 
\end{enumerate}
\end{lemma}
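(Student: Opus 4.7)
The plan is to handle all four parts uniformly by writing the proposed simplex as a vertex matrix following \remref{rem: simplices as arrays} and verifying the adjacent-column simplex condition directly. The key simplifying observation is that for a loop $l$ of length $m$, the trivial extension $\overline{l} = l\circ\alpha_m$ is literally equal to the ``same-sized'' version of $l$ viewed as a row of length $m+1$; accordingly, in parts (a) and (b) certain rows in the matrix coincide after same-sizing, and the required column-union condition collapses to one already guaranteed by the hypothesis.

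For (a), I would same-size all four loops to length $m+1$; the resulting matrix has effectively only two distinct rows, corresponding to $l$ and $l'$, each appearing twice. Every adjacent column union then sits inside some $\{v_i, v'_i, v_{i+1}, v'_{i+1}\}$, a simplex of $X$ by $l\sim l'$, while the trailing columns involve only $x_0$. For (b), same-sizing to length $m$ makes the row for $l'$ equal the row for $\overline{l'}$, so the simplex condition collapses to the edge condition $l \sim \overline{l'}$, which is exactly what ``$\{l,l'\}$ is an edge of $\Omega X$'' means under the convention in \remref{rem: three-simplex}. For (c), no same-sizing is needed since all four loops already have length $m+1$; I would compute the $s$-th column explicitly in the three regimes $s\le i$, $s=i+1$, and $s\ge i+2$ (only $s=i+1$ exhibits four distinct rows), and in each case the adjacent column union is a subset of some $\{v_k, v'_k, v_{k+1}, v'_{k+1}\}$, again a simplex by $l\sim l'$.

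For (d), the matrix splits across the concatenation point into an $l_1$-block occupying the first $m+1$ columns and an $l_2$-block occupying the last $n+1$ columns, joined at a single $\{x_0\}$-column. Entries in the $l_1$-block depend only on $l_1$ and $l'_1$, so adjacent column unions there are simplices of $X$ by $l_1\sim l'_1$; entries in the $l_2$-block depend only on $l_2$ and $l'_2$, so adjacent column unions there are simplices by the hypothesis that $\{l_2,l'_2\}$ is a simplex of $\Omega X$. When $l'_2$ has length $n-1$, I would first same-size $l_1\cdot l'_2$ and $l'_1\cdot l'_2$ to length $m+n+1$ by appending a trailing $x_0$; a direct inspection shows this has the effect of replacing $l'_2$ by $\overline{l'_2}$ in the $l_2$-block, so the condition at the right-hand end is exactly the one built into the edge $\{l_2,l'_2\}$ as interpreted via \remref{rem: three-simplex}.

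The main obstacle is not conceptual but organizational: the entire lemma is an index-bookkeeping exercise. The one spot that genuinely requires care is the junction in part (d), where I must check that the trivial extension used to same-size the \emph{concatenation} corresponds precisely to trivially extending $l'_2$ alone, so that the column-wise simplex conditions in the $l_2$-block align with the given edge condition for $\{l_2,l'_2\}$ rather than some shifted or mismatched variant.
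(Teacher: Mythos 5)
Your proposal is correct and follows essentially the same route as the paper: write each candidate simplex array-style per Remark~\ref{rem: simplices as arrays}, same-size where needed, and check adjacent-column unions, with the junction column in (d) handled exactly as you describe. The only cosmetic difference is that for (a) the paper simply cites \lemref{lem: extension of simplex} rather than redoing the two-distinct-rows computation, but the underlying verification is identical.
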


\begin{proof}
(a) We remarked on this fact in \remref{rem: three-simplex}.  It is a special case of \lemref{lem: extension of simplex}, since $\{ l, l' \}$ is a simplex (an edge) of $\Omega X$.

(b) This is more-or-less tautological. Suppose that we have 
$$l = x_0, v_1, \cdots, v_{m-1}, x_0 \qquad \mathrm{and} \qquad l' = x_0, v'_1, \cdots, v'_{m-2}, x_0.$$
Write the three loops  after ``same-sizing" them to the longer length $m$ as
\[
\left[ \begin{array}{c}
(l)_{m} = l \\
(l')_{m} = \overline{l'}  \\
(\overline{l'})_{m} = \overline{l'} 
\end{array}\right]
\qquad = \qquad
\left[\begin{array}{cccccc}
x_0  & v_1 &  \cdots & v_{m-2} & v_{m-1} &  x_0 \\
x_0  & v'_1 &  \cdots & v'_{m-2} & x_0 &  x_0 \\
x_0  & v'_1 &  \cdots & v'_{m-2} & x_0 &  x_0 
\end{array}\right].
\]  
The simplex condition we want satisfied is that the union of vertices from adjacent columns should form a simplex of $X$.  Since the second and third rows are repeats, this reduces to the same condition on the first two rows.  But that condition is satisfied for the first two rows, since $\{l, l'\}$ qualified as an edge of $\Omega X$. 

(c) Suppose that we have 
$$l = (x_0, v_1, \cdots, v_{m-1}, x_0) \qquad \mathrm{and} \qquad l' = (x_0, v'_1, \cdots, v'_{m-1}, x_0).$$
Write the four loops  as
\[
\left[ \begin{array}{c}
l\circ\alpha_{i} \\
l'\circ\alpha_{i}  \\
l\circ\alpha_{i+1}  \\
l'\circ\alpha_{i+1}  
\end{array}\right]
\qquad = \qquad
\left[\begin{array}{cccccccc}
x_0  & v_1 &  \cdots &  v_{i} & v_{i} & v_{i+1} & \cdots &  x_0 \\
x_0  & v'_1 &  \cdots &  v'_{i} & v'_{i} & v'_{i+1} & \cdots &  x_0 \\
x_0  & v_1 &  \cdots &  v_{i} & v_{i+1} & v_{i+1} & \cdots &  x_0 \\
x_0  & v'_1 &  \cdots &  v'_{i} & v'_{i+1} & v'_{i+1} & \cdots &  x_0 
\end{array}\right].
\]  
The simplex condition we want satisfied is that the union of vertices from adjacent columns should form a simplex of $X$.  In all but the cases of the $i$th and $(i+1)$st, and the $(i+1)$st and the $(i+2)$nd (indexing with the left-most column as the $0$th), this condition is satisfied as it reduces to the contiguity condition that $l$ and $l'$ satisfy by assumption. For the $i$th and $(i+1)$st columns, as well as for the $(i+1)$st and the $(i+2)$nd columns, the union of vertices is
$$\{ v_i, v'_i, v_{i+1}, v'_{i+1} \},$$
which again is a simplex of $X$ because $l$ and $l'$ satisfy the contiguity condition. The result follows.

(d) Suppose that we have 
$$l_1 = (x_0, v_1, \cdots, v_{m-1}, x_0) \qquad \mathrm{and} \qquad l_1' = (x_0, v'_1, \cdots, v'_{m-1}, x_0),$$
and 
$$l_2 = (x_0, w_1, \cdots, w_{n-1}, x_0) \qquad \mathrm{and} \qquad (l_2')_n = (x_0, w'_1, \cdots, w'_{n-1}, x_0)$$
(if $l'_2$ is of length $n-1$ then we have $w'_{n-1} = x_0$). 
Write these four loops  as
\[
\left[ \begin{array}{c}
l_1\cdot l_2 \\
(l_1\cdot l'_2)_{m+n+1}  \\
l'_1\cdot l_2  \\
(l'_1\cdot l'_2)_{m+n+1}  
\end{array}\right]
\qquad = \qquad
\left[\begin{array}{cccccccccc}
x_0  & v_1 &  \cdots &  v_{m-1} & x_0 & x_0 & w_{1} &  \cdots & w_{n-1} &  x_0 \\
x_0  & v_1 &  \cdots &  v_{m-1} & x_0 & x_0 & w'_{1} &  \cdots & w'_{n-1} &  x_0 \\
x_0  & v'_1 &  \cdots &  v'_{m-1} & x_0 & x_0 & w_{1} &  \cdots & w_{n-1} &  x_0 \\
x_0  & v'_1 &  \cdots &  v'_{m-1} & x_0 & x_0 & w'_{1} &  \cdots & w'_{n-1} &  x_0  
\end{array}\right],
\]  
with $l_1\cdot l'_2$ and $l'_1\cdot l'_2$ same-sized to length $m+n+1$ if need be.
The simplex condition we want satisfied is that the union of vertices from adjacent columns should form a simplex of $X$.  For the $i$th and $(i+1)$st columns for $i=0, \ldots, m-1$, and $i=m+1, \ldots, m+n$ respectively, this condition is satisfied as it reduces to the contiguity condition $l_1 \sim l_1'$, $l_2 \sim l_2'$ respectively. For the $m$th and $(m+1)$st columns, all entries are $x_0$ and the condition is trivially satisfied.  The result follows.
\end{proof}

\begin{remark}
In general we expect $\Omega X$ to be disconnected.  Based on topological intuition, this corresponds to components of $\Omega X$ being identified with the fundamental group of $X$. (We confirm this intuition in \corref{cor: components and edge group} below.) Furthermore, any component of 
$\Omega X$ will have vertices that correspond to loops of infinitely many lengths because, for any vertex $l$, we have an edge $\{ l, \overline{l} \}$ in $\Omega X$.    
\end{remark}

\begin{example}\label{ex: hollow 3-simplex}
Suppose $X$ is a hollow $3$-simplex with vertices $\{ x_0, v_1, v_2, v_3 \}$ (so any triple of vertices forms a simplex of $X$ but all $4$ vertices do not).  Let $l^1$, $l^2$ and $l^3$ be the edge loops written, as in \remref{rem: simplices as arrays} above, in the form
\[
\left[ \begin{array}{c}
l^3  \\
l^{2}  \\
l^{1}  
\end{array}\right]
\qquad = \qquad
\left[\begin{array}{cccc}
 x_0  & v_1 &  v_3 &  x_0 \\
 x_0  & v_1 &  v_1 &  x_0 \\
 x_0  & v_1 &  v_2 &  x_0 \\
\end{array}\right]
\]  
Then $\{ l^1, l^2 \}$, $\{ l^1, l^3 \}$ and $\{ l^2, l^3 \}$ are edges in $\Omega X$, so that $\{ l^1, l^2, l^3 \}$ is a $3$-clique.  However, $\{ l^1, l^2, l^3 \}$ is not a simplex of $\Omega X$.  To see this write---as in \remref{rem: simplices as arrays}---the matrix of vertices column-wise as 
\[
\left[ \begin{array}{c}
l^3  \\
l^{2}  \\
l^{1}  
\end{array}\right]
\qquad = \qquad
\left[\begin{array}{c|c|c|c}
 \sigma_0  & \sigma_1 &  \sigma_2 &  \sigma_3 
\end{array}\right]
\]  
 and note that $\sigma_2 \cup \sigma_3 = \{v_3, v_2, v_1 \} \cup \{ x_0 \}$ is not a simplex of $X$.
\end{example}

 \begin{remark}
We may think of $\Omega X$ as being ``stratified" by the sub-complexes whose vertices are loops of length up to $m$.  If we write $\Omega X[m]$ for the sub-complex whose vertices are loops of length exactly $m$, then the first type of edge (contiguity) describes the edges of this sub-complex.  The second type of edge (trivial extension and  contiguity) gives connections between these strata or sub-complexes.  
\end{remark} 

\begin{proposition}\label{prop:Omega-functor}
The construction $\Omega(-)$ defines an endofunctor on the category of pointed simplicial sets and simplicial maps.
\end{proposition}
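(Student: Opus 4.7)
The plan is to define $\Omega f$ on a pointed simplicial map $f\colon (X, x_0) \to (Y, y_0)$ by the levelwise rule: given an edge loop $l = (x_0, v_1, \ldots, v_{m-1}, x_0)$ of length $m$ in $X$, set
$$\Omega f(l) := (y_0, f(v_1), \ldots, f(v_{m-1}), y_0).$$
Because $f$ sends simplices of $X$ to simplices of $Y$, each adjacent pair $\{f(v_i), f(v_{i+1})\}$ is a simplex or repeated vertex of $Y$, so $\Omega f(l)$ is a based edge loop of length $m$ in $Y$, i.e.\ a vertex of $\Omega Y$. In particular $\mathbf{x_0} = (x_0)$ is carried to $\mathbf{y_0} = (y_0)$, so $\Omega f$ is pointed on vertices.

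The main step is to verify that $\Omega f$ carries simplices of $\Omega X$ to simplices of $\Omega Y$. Using the array-style presentation of \remref{rem: simplices as arrays}, a simplex $\sigma = \{l^0, \ldots, l^n\}$ of $\Omega X$ is characterized by the condition that for each $i$, the column union $\sigma_i \cup \sigma_{i+1}$ is a simplex of $X$ (with the boundary condition for the final column involving $\{x_0\}$). Applying $f$ entry-wise commutes with extracting columns and sends each such union to its image under $f$, which is a simplex of $Y$ because $f$ is simplicial. Moreover, the prescription preserves length, so the structural hypothesis on $\sigma$ (all loops of length $m$ or $m-1$, at least one of length exactly $m$) transfers to the image. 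Hence $\{\Omega f(l^0), \ldots, \Omega f(l^n)\}$ is (the vertex set of) a simplex of $\Omega Y$, possibly of dimension strictly less than $n$ if $f$ identifies some of the loops $l^j$.

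Functoriality, namely $\Omega(\id_X) = \id_{\Omega X}$ and $\Omega(g\circ f) = \Omega g \circ \Omega f$, is then immediate by reading off the definition coordinate-wise on each edge loop.

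I do not anticipate any serious obstacle. The one subtle point worth flagging is that $\Omega f$ need not be injective on vertices: two distinct loops in $X$ can have coincident images under $\Omega f$ if $f$ collapses vertices, and loops of different lengths in $X$ may map to different loops in $Y$ that would look ``the same'' as sequences if the final repeated basepoints were dropped. Neither phenomenon creates difficulty, since the length of a loop is recorded in the vertex of $\Omega X$ and simplicial maps are allowed to collapse simplices.
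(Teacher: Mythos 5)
Your proposal is correct and follows essentially the same route as the paper: define $\Omega f$ by postcomposition with $f$ (equivalently, entrywise application), check the column/array simplex condition is preserved because $f$ is simplicial and commutes with the trivial extensions used to same-size the loops, and note that functoriality is immediate. Your closing remark about possible non-injectivity on vertices is a reasonable extra observation but raises no issue the paper needed to address.
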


\begin{proof}
%

Suppose that $f : (X, x_{0}) \rightarrow (Y, y_{0})$ is a simplicial map.
A mapping $\Omega f : \Omega X \rightarrow \Omega Y$ is defined on vertices by composition:  $\Omega f (l) = f \circ l$, where $l : (I_{m}, \{0,1\}) \rightarrow (X,x_{0})$.
By definition, if $g : (Y, y_{0}) \rightarrow (Z, z_{0})$ is another simplicial map, then 
$\Omega (g \circ f) = \Omega g \circ \Omega f$, and clearly $\Omega (1_{X}) = 1_{\Omega X}$.

We need to show that $\Omega f$ is simplicial.  
Let $\sigma= \{ l^{0}, \ldots, l^{k} \}$ be a $k$-simplex in $\Omega X$, in which one vertex has length $m$ and the rest
have length either $m$ or $m-1$.
With the notation of Remark~\ref{rem: simplices as arrays}, 
the fact that trivial extensions commute with simplicial maps means that for all $i$, $(f \circ l^{i})_{m} = f \circ (l^{i})_{m}$.
Our condition that $\sigma$ is a simplex is that for each simplex $\epsilon$ in $I_{m}$, $(l^{0})_{m}(\epsilon) \cup \cdots \cup (l^{k})_{m}(\epsilon)$ is a simplex in $X$.

Now, $\Omega f (\sigma) = \{ \Omega f (l^{0}), \ldots, \Omega f (l^{k}) \}
= \{ f \circ l^{0}, \ldots, f \circ l^{k} \}$.
If $\epsilon$ is a simplex in $I_{m}$, then 
\[
	(f \circ l^{0})_{m}(\epsilon) \cup \cdots \cup 
    (f \circ l^{k})_{m}(\epsilon) = f((l^{0})_{m}(\epsilon) \cup \cdots \cup (l^{k})_{m}(\epsilon)).
\]
Since $\sigma$ is a simplex, $(l^{0})_{m}(\epsilon) \cup \cdots \cup (l^{k})_{m}(\epsilon)$ is a simplex in $X$.
Since $f$ is simplicial, $f((l^{0})_{m}(\epsilon) \cup \cdots \cup (l^{k})_{m}(\epsilon))$ is a simplex in $Y$.
Therefore, $\Omega f (\sigma)$ is a simplex in $\Omega Y$, and so $\Omega f$ is simplicial.
\end{proof}

\section{Edge Loops in $\Omega X$ and components of $\Omega X$}\label{sec: loops in Omega X}

Recall that we write $\mathbf{x^m_0} \in \Omega X[m]$ for the edge loop in $X$ of length $m$ that consists of repeats of $x_0$.  
A typical edge loop (or path) in $\Omega X$ will be denoted by $\gamma$, and (if of length $n$) is of the form
$$\gamma = (\mathbf{x_0}, \mathbf{x^1_0}, l^2, \ldots, l^{n-2}, \mathbf{x^1_0}, \mathbf{x_0}),$$
with each $l^j$ an edge loop in $X$ (of varying lengths).

\begin{proposition}\label{prop: loop in X(M)}
Suppose we have an edge loop in $\Omega X$ of length $n$
$$\gamma = ( \mathbf{x_0}, \mathbf{x^1_0}, l^2, \ldots,  l^{n-2}, \mathbf{x^1_0}, \mathbf{x_0} ).$$
Let $M$ be the least $m$ such that $\gamma \subseteq \Omega X[\leq m]$.  (Said differently, the longest edge loop in $\gamma$ is of length $M$.) Notice that this entails $n \geq 2M$.  Then there is some edge loop $\widehat{\gamma}$ with  $\widehat{\gamma} \approx \gamma$ and $\widehat{\gamma}$ of the form
$$\widehat{\gamma}     = ( \mathbf{x_0}, \mathbf{x^1_0}, \ldots, \mathbf{x^M_0}, \ell^1, \ldots, \ell^p, \mathbf{x^M_0}, \ldots, \mathbf{x^1_0}, \mathbf{x_0} ),$$
with $( \mathbf{x^M_0}, \ell^1, \ldots, \ell^p, \mathbf{x^M_0} )$ an edge loop in $\Omega X [M]$ starting and finishing at  $\mathbf{x^M_0} \in \Omega X[ M]$.
\end{proposition}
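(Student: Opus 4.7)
The plan is to transform $\gamma$ into the stated form $\widehat{\gamma}$ by a sequence of extension-contiguity moves on edge loops in $\Omega X$, using the two generating moves---(i) repeat or delete a repeated vertex, and (ii) contiguity swap governed by $2$-simplices of $\Omega X$. Two auxiliary tools drawn from \lemref{lem: 3-simplex extensions} will do the work. The first is a \emph{raising move}: if $l^{j-1}, l^j, l^{j+1}$ are three consecutive vertices of an edge loop in $\Omega X$ with $|l^j| = k$ and both $|l^{j-1}|, |l^{j+1}| \in \{k, k+1\}$, then one replaces $l^j$ by its trivial extension $\overline{l^j}$ via a type-(ii) move; the required $2$-simplex conditions $\{l^{j-1}, l^j, \overline{l^j}\}$ and $\{l^j, \overline{l^j}, l^{j+1}\}$ follow from \lemref{lem: 3-simplex extensions}(a) when the relevant neighbor has length $k$, and from part (b) when it has length $k+1$. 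The second is a \emph{sentinel insertion}: given consecutive vertices $\mathbf{x^k_0}$ and $l^{j+1}$ with $|l^{j+1}| \in \{k, k+1\}$, one inserts $\mathbf{x^{k+1}_0}$ between them by first doubling $\mathbf{x^k_0}$ via move (i) and then raising the second copy to $\mathbf{x^{k+1}_0}$ via move (ii); the relevant $2$-simplex $\{\mathbf{x^k_0}, \mathbf{x^{k+1}_0}, l^{j+1}\}$ is a direct array check using \remref{rem: simplices as arrays}.

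With these tools in hand, I will iteratively modify $\gamma$ by alternating raising moves at interior local minima of the level walk with sentinel insertions at the left and right boundary strings. Every local minimum is raisable (its neighbors are no shorter than it by definition), so the procedure can always make progress; and each raise strictly increases the total interior length sum, which is bounded above by $M(n-3)$, so the procedure terminates. Along the way, the left and right boundary strings grow from $\mathbf{x^1_0}$ up to $\mathbf{x^M_0}$, and the interior vertices are raised to length exactly $M$. At termination, $\gamma$ has been transformed into a loop of the stated form, with the middle portion automatically a loop in $\Omega X[M]$ by construction.

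The hard part will be the bookkeeping needed to coordinate the raising and insertion steps and to ensure that some valid move remains available at each stage until the target form is reached. Interior dips of the level walk down to $0$ (where some $l^j = \mathbf{x_0}$) are themselves handled by the raising move, since $\mathbf{x_0}$'s neighbors sit at levels $0$ or $1$ and so trivially satisfy the height condition. I expect the full argument to be organised as a double induction on the current heights of the two boundary strings and on the interior length deficit $\sum_{j} (M - |l^j|)$, with raising moves decreasing the deficit and insertion moves increasing the boundary heights, so that the procedure terminates in the promised $\widehat{\gamma}$.
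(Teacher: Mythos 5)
Your two local moves are exactly the ones the paper uses, and both are correctly justified: the ``raising'' of a single vertex $l^j$ to $\overline{l^j}$ is legitimate under the stated neighbor conditions via parts (a) and (b) of \lemref{lem: 3-simplex extensions}, and the ``sentinel insertion'' (repeat $\mathbf{x_0^k}$, then raise the second copy to $\mathbf{x_0^{k+1}}$) is precisely the paper's ``repeat $\mathbf{x^1_0}$ and then trivially extend\dots'' step. So the ingredients match. The difference is organizational, and it is exactly where you have deferred the work that the paper's organization pays off. The paper proceeds in two phases: first it eliminates every local minimum of the length profile by replacing an entire dip $r, r-1, \ldots, r-1, r$ with its trivially extended version in a single contiguity of loops in $\Omega X$, so that the profile becomes unimodal; only then does it build the staircase, extending whole blocks of loops at once. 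After phase one there is never a configuration in which the vertex following the staircase top has smaller length than the staircase top, so the insertion step is always available and no interleaving is needed.

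Your greedy procedure, by contrast, must confront such configurations, and this is a genuine gap as written. Two specific points need proof, not just expectation. First, ``every local minimum is raisable'' is false at the boundary: the top of the current staircase $\mathbf{x_0^a}$ always has a strictly lower left neighbor and is never itself raisable, and if the adjacent non-staircase vertex has length $a-1$ then the sentinel insertion on that side is blocked too. One must then argue that a raisable weak local minimum exists strictly in the interior, or that an insertion is available on the other side; this follows from a case analysis on the minimum of the height profile (any interior vertex realizing the minimum away from positions $1$ and $N-1$ is a weak local minimum), but that analysis is the content of the proof and is absent. Second, your termination measure ``total interior length sum bounded by $M(n-3)$'' is not well defined across insertion steps, since $n$ grows; you should replace it by the deficit $\sum_j (M - |l^j|) + (M-a) + (M-b)$ (summing over non-staircase vertices, with $a,b$ the staircase heights), which every raise and every insertion strictly decreases and which vanishes exactly at the target form. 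With those two points supplied, your argument goes through and is a legitimate, if less tidy, alternative to the paper's two-phase proof.
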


\begin{proof}
First we argue that we may assume the lengths of the edge loops in $\gamma$ are non-decreasing up to the maximum length of $M$, and then non-increasing back down to $0$ once their lengths start to decrease.
To this end, consider the (integer) sequence of lengths of each vertex $l^i$ of $\gamma$:
$$0, 1, \mathrm{length}(l^2), \cdots, M, \cdots, \mathrm{length}(l^{n-2}), 1, 0.$$ 
Suppose that, somewhere between $l^2$ and $l^{n-2}$ we have a section of $\gamma$
$$\ldots, l^{j}, \ldots, l^{j+k}, \ldots $$
whose lengths display a local minimum, in the sense that we have
$$\mathrm{length}(l^{j}) = r, \mathrm{length}(l^{j+1}) = \cdots = \mathrm{length}(l^{j+k-1}) = r-1, \mathrm{length}(l^{j+k}) = r  $$
for some $r$.  Then we may replace this section of $\gamma$ with the path
$$\ldots, l^{j}, \overline{l^{j}}, \ldots, \overline{l^{j+k-1}}, l^{j+k}, \ldots,$$
each vertex of which has length $r$.  This adjusted version of $\gamma$ is contiguous, as a loop in $\Omega X$, to the original $\gamma$ since the pair agree outside the section we are adjusting, and the original section and its replacement satisfy the contiguity condition for paths by parts (a) and (b) of \lemref{lem: 3-simplex extensions}.  By repeating this removal of any local minima in the sequence of edge lengths, we may assume, up to extension-contiguity equivalence of loops in $\Omega X$,  that  $\gamma$ is a sequence of edge loops whose lengths are non-decreasing up to the maximum length of $M$, continue at length $M$ for a section, then continue non-increasing back down to $0$.  

Next, assume $\gamma$ is now a sequence of edge loops in $X$ whose lengths are non-decreasing up to the maximum length of $M$. Working up to extension-contiguity of loops, we may remove any repeats of  $\mathbf{x_0}$ and $\mathbf{x^1_0}$ and write this first section of $\gamma$ as
$$\mathbf{x_0}, \mathbf{x^1_0}, l^2_1, \ldots, l^2_{n_2}, l^3_1, \ldots, l^3_{n_3}, \ldots, l^{M-1}_1, \ldots, l^{M-1}_{n_{M-1}}, l^{M}_1, \ldots, l^{M}_{n_{M}}, \ldots,$$
after which the lengths of the edge loops are non-increasing back down to $0$. Here, we intend an edge loop $l^{i}_{j}$ to be an edge loop of length $i$, of which there are $n_i$ in $\gamma$. We now work on this section of $\gamma$ using the two types of move we have available for operating with the edge group.  Begin by repeating $\mathbf{x^1_0}$ and then trivially extend every loop from the $2$nd occurrence of $\mathbf{x^1_0}$ through $l^{M-1}_{n_{M-1}}$ (the last occurrence of a loop of length $M-1$ before the length $M$ loops start). At this point, we have replaced $\gamma$ with a loop in $\Omega X$ that starts  
$$\mathbf{x_0}, \mathbf{x^1_0}, \mathbf{x^2_0},  \overline{l^2_1}, \ldots,  \overline{l^{M-1}_{n_{M-1}}},  l^{M}_{n_{M}}, \ldots,$$
with the edge loops from $l^{M}_{n_{M}}$ onwards those of  $\gamma$. After the repeating of $\mathbf{x^1_0}$, trivially extending all the terms we did results in a contiguous loop in $\Omega X$ from parts (a) and (b) of \lemref{lem: 3-simplex extensions}. 

We iterate this step, repeating $\mathbf{x^2_0}$ and then trivially extending every loop from the $2$nd occurrence of $\mathbf{x^2_0}$ through $\overline{l^{M-2}_{n_{M-2}}}$ (which is now the last occurrence of a loop of length $M-1$ before the length $M$ loops start). As before, this results in a loop that is extension-contiguity equivalent to the original $\gamma$.  We iterate this step sufficiently many times until we arrive at a loop that is extension-contiguity equivalent to the original $\gamma$, and which starts with a section
$$\mathbf{x_0}, \mathbf{x^1_0}, \ldots, \mathbf{x^M_0},$$
and continues with sections of loops in $X$ all of length $M$
$$l^i_1 \circ \alpha_i^{M-i}, \ldots, l^i_{n_i}\circ \alpha_i^{M-i},$$
for $i = 2, \ldots, M-1$, followed by the section 
$$l^{M}_1, \ldots, l^{M}_{n_{M}}$$
from the original $\gamma$, followed by the remainder of the original $\gamma$.   
We operate in a similar way on the remainder of the original $\gamma$ to arrive at the desired result.  Each step in this process consists of an extension followed by a contiguity, so we arrive at a loop $\widehat{\gamma}$ in $\Omega X$ of the desired form that is extension-contiguity equivalent to the original $\gamma$.
\end{proof}

We now discuss (edge-path) connected components of $\Omega X$. The next result is a combinatorial counterpart of the familiar adjunction in the topological setting
$$\mathrm{map}\left(I, \mathrm{map}(I, X) \right) \equiv \mathrm{map}(I \times I, X)$$
that allows a homotopy of paths in $X$ to be viewed as a path in $\Omega X$. 

\begin{proposition}\label{prop: left-right homotopy}
Let  
$$\gamma = ( l^0, l^1, \ldots,  l^{N-1}, l^N )$$
be an edge path (not necessarily an edge loop) in $\Omega X$ of length $N$.  Then $l^0$ and $l^N$ are extension-contiguity equivalent as loops in $X$.  Conversely, if $l$ and $l'$ are extension-continguity equivalent loops in $X$, then there is an edge path (of some length $N$) in $\Omega X$
$$( l, l^1, \ldots,  l^{N-1}, l' ).$$
\end{proposition}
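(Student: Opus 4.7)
The proposition expresses a "combinatorial adjunction": edge paths in $\Omega X$ correspond exactly to extension-contiguity equivalences between their endpoints, viewed as loops in $X$. My plan is to handle each direction separately, translating the two types of edge in $\Omega X$ (same-length contiguity, and trivial-extension-then-contiguity across lengths $m$, $m-1$) into elementary moves in the extension-contiguity calculus, and conversely. The forward direction will be a direct induction along the edges of $\gamma$, while the converse will lean heavily on \lemref{lem: expand then contract}, which already puts extension-contiguity equivalence into a canonical normal form.

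\textbf{Forward direction.} I would argue edge by edge. Fix $0 \leq i \leq N-1$ and examine the edge $\{l^i, l^{i+1}\}$ in $\Omega X$. By the definition of simplices of $\Omega X$ (\remref{rem: three-simplex}), either both $l^i, l^{i+1}$ have the same length $m$ and are contiguous as simplicial maps $I_m \to X$, or (WLOG) $l^i$ has length $m$, $l^{i+1}$ has length $m-1$, and $l^i \sim l^{i+1} \circ \alpha_{m-1}$. In both cases, one has immediately that $l^i \approx l^{i+1}$ as loops in $X$: the first case is a contiguity, and the second is a single trivial extension followed by a contiguity. Since extension-contiguity equivalence is an equivalence relation, iterating along the path gives $l^0 \approx l^N$.

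\textbf{Converse direction.} Assume $l \approx l'$ in $X$. By \lemref{lem: expand then contract}, there exists $M \geq \max\{m, n\}$ (where $m, n$ are the lengths of $l, l'$) and a factorization
$$l \;\approx\; l \circ \alpha_m^{M-m} \;\simeq\; l' \circ \alpha_n^{M-n} \;\approx\; l'.$$
I would build the edge path in $\Omega X$ in three concatenated pieces corresponding to these three steps. For the first step, the sequence $l,\, \overline{l},\, \overline{\overline{l}},\, \ldots,\, l \circ \alpha_m^{M-m}$ is an edge path in $\Omega X$ because each adjacent pair is of the form $\{k, \overline{k}\}$, which is an edge by \remref{rem: three-simplex}. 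For the middle step, unpack the contiguity equivalence as a chain
$$l \circ \alpha_m^{M-m} = k^0 \sim k^1 \sim \cdots \sim k^r = l' \circ \alpha_n^{M-n}$$
of loops of common length $M$ in $X$; each same-length contiguity $k^j \sim k^{j+1}$ is precisely an edge of $\Omega X$. The third step mirrors the first (in reverse) along the chain $l' \circ \alpha_n^{M-n},\, \ldots,\, \overline{l'},\, l'$. Concatenating produces the required edge path from $l$ to $l'$ in $\Omega X$.

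\textbf{Anticipated obstacle.} The conceptual content is slight; the only real work is bookkeeping. The converse direction would be much more delicate if one tried to follow an arbitrary extension-contiguity deformation (with extensions, contractions, and contiguities interleaved arbitrarily, and with extensions at arbitrary interior positions via general $\alpha_i$), since mid-sequence extensions of the form $\alpha_i$ with $i < m$ are not directly edges of $\Omega X$. The main technical point is therefore to invoke \lemref{lem: expand then contract} so that all extensions/contractions happen at the end of the loop (trivial extensions only), ensuring each elementary move in the normal form is literally an edge of $\Omega X$. Once that reduction is in place, the remaining verification that each adjacent pair satisfies the simplex (edge) condition of \secref{sec: Omega X} is routine.
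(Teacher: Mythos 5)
Your proof is correct and follows essentially the same route as the paper's: the paper handles the forward direction by same-sizing all vertices of $\gamma$ to the maximal length $M$ at once and reading off a single contiguity equivalence of length-$M$ loops, whereas you work edge by edge and invoke transitivity of $\approx$, but the underlying observation is identical. Your converse---trivial extensions up to a common length, a same-length contiguity chain, then trivial contractions, justified by \lemref{lem: expand then contract}---is exactly the paper's construction (the paper leaves the appeal to that lemma implicit).
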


\begin{proof}
Let $M$ be the maximum length of loop amongst the vertices $l^j$ of $\gamma$.  For each loop $l^j$ of length $m_j$, extend it to a loop of length $M$ (if necessary) by setting
$$(l^j)_M = \begin{cases} l^j & \text{if $m_j = M$}   \\
& \\
l^{j} \circ \alpha_{m_{j}}^{M-m_{j}}  & \text{if $m_j < M$}. \end{cases}$$
Consecutive vertices $l^j$ and $l^{j+1}$ of the path $\gamma$ form an edge in $\Omega X$ and so satisfy the (simplex) condition that 
$$\{ v^j_i, v^j_{i+1}, v^{j+1}_i, v^{j+1}_{i+1} \}$$
is a simplex of $X$, for each $0\leq i \leq m_j$ and $1 \leq j \leq N-1$. Recall that adjacent vertices $l^j$ and $l^{j+1}$ in $\Omega X$ may only differ in length by at most $1$. Since we are only adding repeats of $x_0$ to the ends of the loops $l^j$ and $l^{j+1}$, it follows that each pair $\{ (l^j)_M, (l^{j+1})_M \}$ satisfies the same condition (now with $i \leq M-1$).  But this is the same condition that must be satisfied for $(l^j)_M$ and $(l^{j+1})_M$ to be contiguous loops in $X$ (now of the same length as each other). That is, we have a contiguity equivalence
$$(l^0)_M \sim \cdots \sim (l^{N-1})_M \sim (l^N)_M.$$
Since (tautologically) we have $l^0 \approx (l^0)_M$ and $l^N \approx (l^N)_M$, it follows that we have $l^0 \approx l^N$. 

Conversely, suppose that $l$ of length $p$ and $l'$ of length $q$ are extension-contiguity equivalent loops in $X$. Then there are extensions $(l)_m$ and $(l')_m$ of $l$ and $l'$, respectively, to some common length $m \geq \mathrm{max}\{ p, q \}$ that are contiguity equivalent. Now, tautologically, we have extension-contiguity equivalences
$$l \approx (l)_{p+1} \approx \cdots \approx (l)_m \qquad \mathrm{and} \qquad l' \approx (l')_{q+1} \approx \cdots \approx (l')_m.$$
It follows that we have an edge path $\gamma$ in $\Omega X$ from vertex $l$ to vertex $l'$
\begin{displaymath}
    \gamma = ( l, (l)_{p+1}, \ldots, (l)_{m}, \ldots, (l')_{m}, \cdots, (l')_{q+1}, l' ). \qedhere
\end{displaymath}
\end{proof}

We may restate \propref{prop: left-right homotopy} as follows. 

\begin{corollary}\label{cor: components and edge group}
For each simplicial complex $X$, there is a bijection of sets
\begin{displaymath}
\left\{ \mbox{Edge-Path Components of $\Omega X$} \right\} \leftrightarrow E(X) \qed
\end{displaymath}
\end{corollary}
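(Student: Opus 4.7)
The plan is to set up the claimed bijection explicitly and show that both directions follow essentially formally from \propref{prop: left-right homotopy}, which has already done the real work. Since the vertices of $\Omega X$ are precisely the based edge loops in $X$ (of all lengths), and the elements of $E(X)$ are precisely the extension-contiguity equivalence classes of such loops, the correspondence ought to be: send an edge-path component $C$ of $\Omega X$ to the class $[l] \in E(X)$, where $l$ is any vertex of $C$.

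First I would verify that this assignment is well-defined. If $l$ and $l'$ are two vertices lying in the same edge-path component of $\Omega X$, then by definition there is an edge path in $\Omega X$ of the form $(l, l^1, \ldots, l^{N-1}, l')$. The forward direction of \propref{prop: left-right homotopy} then gives $l \approx l'$ as loops in $X$, so $[l] = [l']$ in $E(X)$. Hence the map on components is well defined.

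Next I would establish injectivity and surjectivity. For injectivity, suppose $C$ and $C'$ are edge-path components with representatives $l \in C$ and $l' \in C'$ satisfying $[l] = [l']$. Then $l$ and $l'$ are extension-contiguity equivalent, and the converse direction of \propref{prop: left-right homotopy} produces an explicit edge path in $\Omega X$ from $l$ to $l'$, forcing $C = C'$. For surjectivity, any class $[l] \in E(X)$ is represented by some based loop $l$, which is automatically a vertex of $\Omega X$ and therefore lies in some edge-path component $C$; by construction $C$ maps to $[l]$.

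There is no real obstacle here: the content of the corollary is entirely packaged in the biconditional of \propref{prop: left-right homotopy}, and the only thing to check is that the equivalence relation on loops in $X$ induced by ``lying in the same edge-path component of $\Omega X$'' coincides with the extension-contiguity relation defining $E(X)$, which is exactly that proposition. The proof can therefore be written in a few lines and will not require any of the structural lemmas (\lemref{lem: 3-simplex extensions}, \lemref{lem: extension of simplex}, etc.) beyond what \propref{prop: left-right homotopy} has already absorbed.
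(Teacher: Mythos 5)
Your proof is correct and is exactly what the paper intends: the corollary is presented there as a direct restatement of \propref{prop: left-right homotopy}, with the bijection given by sending a component to the extension-contiguity class of any of its vertices, well-definedness and injectivity coming from the two directions of that proposition and surjectivity from the fact that every based loop is a vertex of $\Omega X$. No differences worth noting.
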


We record some items related to the ideas of \propref{prop: left-right homotopy} that we will use in the sequel.

\begin{lemma}\label{lem: contiguity of extensions}
Let  
$$\gamma = ( \mathbf{x_0^r}, l^1, \ldots,  l^{N-1}, \mathbf{x_0^s} )$$
be an edge path (not necessarily an edge loop) in $\Omega X$ of length $N$.  
Let $M$ be the longest length of loop in $X$ that occurs as a vertex of $\gamma$, and  write 
$$[\gamma]_M := ( \mathbf{x_0^M}, (l^1)_M, \ldots,  (l^{N-1})_M, \mathbf{x_0^M} ).$$
\begin{itemize}
    \item[(a)] $[\gamma]_M$ is a loop in $\Omega X[M]$ based at $\mathbf{x_0^M}$, the constant loop in $X$ of length $M$.
 \item[(b)] Suppose that 
$$\gamma' = ( \mathbf{x_0^p}, k^1, \ldots,  k^{N-1}, \mathbf{x_0^q} )$$
is a second path with $\gamma' \sim \gamma$ (contiguous paths in $\Omega X$ of the same length). Notice this entails $r$ and $p$, respectively $s$ and $q$, differ by no more than $1$. Let $M$ be at least the longest length of loop in $X$ that occurs as a vertex of either $\gamma$ or $\gamma'$. Then we have
$[\gamma]_M \sim [\gamma']_M$ (contiguous as loops in $\Omega X[M]$ based at $\mathbf{x_0^M}$).
\end{itemize}
\end{lemma}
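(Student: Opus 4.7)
The whole lemma hinges on a single structural observation: if $\tau = \{l^{0}, \ldots, l^{k}\}$ is a simplex of $\Omega X$ in which every loop has length at most $M$, then the set of trivial extensions $\{(l^{0})_{M}, \ldots, (l^{k})_{M}\}$ is a simplex of $\Omega X[M]$. I would verify this once using the column-union criterion of Remark~\ref{rem: simplices as arrays}: write the matrix of trivial extensions of height $k+1$ and width $M+1$; the leading columns (up to the common length at which $\tau$ was originally same-sized) satisfy the simplex condition because $\tau$ was a simplex of $\Omega X$, and every subsequent column is constant with value $x_{0}$, so every adjacent pair involving those extra columns has union $\{x_{0}\}$, a simplex of $X$. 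This is essentially the same computation used in \lemref{lem: extension of simplex}.

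For part (a), I would apply this observation to each edge $\{l^{i}, l^{i+1}\}$ of the path $\gamma$ (noting that $\{l^{i},l^{i+1}\}$ is a $1$-simplex of $\Omega X$, or $l^{i} = l^{i+1}$) to conclude that $\{(l^{i})_{M}, (l^{i+1})_{M}\}$ is an edge of $\Omega X[M]$. Since the endpoints $\mathbf{x_{0}^{r}}$ and $\mathbf{x_{0}^{s}}$ both same-size to $\mathbf{x_{0}^{M}}$, the sequence $[\gamma]_{M}$ is a loop in $\Omega X[M]$ based at $\mathbf{x_{0}^{M}}$.

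For part (b), I would apply the observation both to the simplices witnessing the contiguity $\gamma \sim \gamma'$ and to the $0$-simplices of $I_{N}$. Per Definition~\ref{def: contiguous maps}, for each edge $\{i, i+1\}$ of the domain $I_{N}$ the set $\{l^{i}, l^{i+1}, k^{i}, k^{i+1}\}$ is a simplex of $\Omega X$, and for each vertex $\{i\}$ the set $\{l^{i}, k^{i}\}$ is a simplex of $\Omega X$. By the hypothesis on $M$, every loop involved has length at most $M$, so the observation promotes these sets to simplices of $\Omega X[M]$ after same-sizing to length $M$. This is exactly the contiguity condition for $[\gamma]_{M}$ and $[\gamma']_{M}$ as simplicial maps $I_{N} \to \Omega X[M]$; their endpoints agree with those of part (a), both being $\mathbf{x_{0}^{M}}$, so they are contiguous as length-$N$ loops based at $\mathbf{x_{0}^{M}}$.

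The main obstacle, such as it is, is purely bookkeeping: one must keep careful track of the various lengths and of what the same-sizing operation does to the column structure at the boundary between the originally-present matrix entries and the appended all-$x_{0}$ columns. No idea beyond the initial observation is required; both parts reduce to that single column-union check.
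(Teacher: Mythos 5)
Your proposal is correct and follows essentially the same route as the paper: part (a) is the extension-to-common-length computation already carried out in \propref{prop: left-right homotopy} (which the paper simply cites), and part (b) is the same array-style column-union check after same-sizing, with the appended all-$x_0$ columns handled trivially. Your packaging of both parts as one structural observation about trivially extending a simplex of $\Omega X$ to $\Omega X[M]$ is a tidy but not substantively different presentation.
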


\begin{proof}
(a) This is just a particular case of \propref{prop: left-right homotopy}.

(b) The contiguity $\gamma \sim \gamma'$ entails that $\{ l^j, l^{j+1}, k^j, k^{j+1} \}$ is a simplex of $\Omega X$, for each $0 \leq j \leq N-1$ (we interpret $l^0$ as $\mathbf{x_0^r}$ and so-on). As a simplex, the lengths of these four loops may differ from each other by no more than $1$.  Suppose the vertex that has longest length amongst the four has length $m_j$.  We same-size all four and write them array-style as above, as 
\[
\left[ \begin{array}{c}
(l^j)_{m_j} \\
{(l^{j+1})}_{m_j} \\
{(k^{j})}_{m_j} \\
{(k^{j+1})}_{m_j}  
\end{array}\right]
\qquad = \qquad
\left[\begin{array}{ccccc}
x_0  & v^j_1 &  \cdots &  v^j_{m_j - 1} &  x_0 \\
x_0  & v^{j+1}_1 &  \cdots &  v^{j+1}_{m_j - 1} &   x_0 \\
x_0  & w^j_1 &  \cdots &  w^j_{m_j - 1} &   x_0 \\
x_0  & w^{j+1}_1 &  \cdots &  w^{j+1}_{m_j - 1} &   x_0 
\end{array}\right],
\]  
where one or more of the penultimate entries of each row may be $x_0$, if that row corresponds to a loop of length $m_j -1$.  Then the simplex condition amounts to the union of those vertices of $X$ in adjacent columns giving a simplex of $X$.  Now, if we extend each row to length $M$ by adding repeats of $x_0$, the same simplex condition will hold.   
\end{proof}

To show that different components of $\Omega X$ have isomorphic edge group,  we will use \emph{left translation by a vertex of $\Omega X$}.  Let $\ell \in \Omega X$ be a loop in $X$ based at $x_0$.  By left translation by $\ell$ we mean the vertex map $L_\ell \colon \Omega X \to \Omega X$ defined by $L_\ell(l) := \ell\cdot l$ (the concatenation of loops in $X$) for each vertex $l \in \Omega X$.   

\begin{lemma}\label{lem: left translation}
For $\ell$ any vertex of $\Omega X$, left translation by $\ell$ extends to a simplicial map $L_\ell \colon \Omega X \to \Omega X$.   If $\ell \sim \ell'$ are contiguous loops in $X$, so that $\{ \ell, \ell' \}$ is an edge of $\Omega X$, then we have contiguous simplicial maps $L_\ell \sim L_{\ell'} \colon \Omega X \to \Omega X$.   If $(\Omega X)_l$ denotes the edge-path component of $\Omega X$ that contains the vertex $l \in \Omega X$, then $L_\ell \left( (\Omega X)_l \right) \subseteq (\Omega X)_{L_\ell(l)}$. 
\end{lemma}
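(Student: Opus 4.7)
The plan is to prove the three assertions in order, using the array-style representation of simplices from \remref{rem: simplices as arrays} to verify the simplex conditions directly. Throughout, let $\ell=(x_0,u_1,\ldots,u_{k-1},x_0)$ be of length $k$ and let $\sigma=\{l^0,\ldots,l^n\}$ be a simplex of $\Omega X$ whose loops have length either $m$ or $m-1$.

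For the first assertion, I would verify that $L_\ell(\sigma)=\{\ell\cdot l^0,\ldots,\ell\cdot l^n\}$ is a simplex of $\Omega X$. Concatenation commutes with trivial extension (padding the $l^j$ of length $m-1$ up to length $m$ and then concatenating with $\ell$ gives the same loop, up to one trailing $x_0$, as concatenating first and then same-sizing), so I may write $L_\ell(\sigma)$ as an $(n+1)\times(k+m+2)$ array in which columns $0$ through $k$ are the constant rows $(x_0,u_1,\ldots,u_{k-1},x_0)$ coming from $\ell$, column $k+1$ is a column of $x_0$'s, and columns $k+1$ through $k+m+1$ reproduce the array of $\sigma$. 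The adjacent-column union test is satisfied because $\ell$ is an edge loop in $X$ (for columns $0$--$k$), trivially (for the transition column), and because $\sigma$ was already a simplex of $\Omega X$ (for columns $k+1$--$k+m+1$). This is just the $(n+1)$-fold analogue of part (d) of \lemref{lem: 3-simplex extensions}.

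For the second assertion, I would take $\ell\sim\ell'$ in $X$ (so both have length $k$) and a simplex $\sigma$ of $\Omega X$ as above, and show $L_\ell(\sigma)\cup L_{\ell'}(\sigma)$ is a simplex of $\Omega X$. Arranging it as a $(2n+2)\times(k+m+2)$ array with the $\ell\cdot l^j$ rows on top and the $\ell'\cdot l^j$ rows on the bottom, the first $k+1$ columns contribute only entries drawn from $\ell$ and $\ell'$, so the adjacent-column union condition there reduces to $\{u_i,u_{i+1},u'_i,u'_{i+1}\}$ being a simplex of $X$, which is exactly $\ell\sim\ell'$. The transitional column of $x_0$'s is again trivial, and on the right-hand side the rows indexed by $j$ from the $L_\ell(\sigma)$ block and from the $L_{\ell'}(\sigma)$ block agree (both equal $l^j$), so the condition reduces to $\sigma$ being a simplex.

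The third assertion is then immediate: once $L_\ell$ is known to be simplicial, any edge path $(l=l^0,l^1,\ldots,l^p=l')$ in $\Omega X$ maps to an edge path $(L_\ell(l^0),L_\ell(l^1),\ldots,L_\ell(l^p))$, since the images of adjacent vertices lie in the image of the edge joining them, hence form an edge of $\Omega X$. I do not expect a serious obstacle; the only genuinely fiddly point is to handle the same-sizing carefully when some $l^j$ have length $m-1$, where one has to check that the extra trailing $x_0$ introduced after concatenation does not disturb the simplex checks. Since that padding only adds $x_0$ entries in columns that already see $x_0$ elsewhere, this bookkeeping goes through without incident.
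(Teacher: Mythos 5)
Your proposal is correct and follows essentially the same route as the paper: write $L_\ell(\sigma)\cup L_{\ell'}(\sigma)$ array-style, check the adjacent-column union condition separately on the $\ell$/$\ell'$ block (where it is the contiguity $\ell\sim\ell'$), the transitional column of $x_0$'s, and the $\sigma$ block (where it is the hypothesis that $\sigma$ is a simplex), and deduce the component statement from the general fact that simplicial maps send edge paths to edge paths. The only cosmetic difference is that you verify the first assertion separately, whereas the paper obtains it as the special case $\ell'=\ell$ of the contiguity assertion; your same-sizing remark matches the paper's observation that $\ell\cdot(l^j)_m=(\ell\cdot l^j)_{r+m+1}$.
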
 

\begin{proof}
Suppose that 
$$\sigma = \{ l^0, \cdots, l^n \}$$
is a simplex of $\Omega X$. For the first assertion, we want to confirm that $L_\ell(\sigma)$ is a simplex of $\Omega X$.  For the second assertion, we want to confirm that $L_\ell(\sigma) \cup L_{\ell'}(\sigma)$ is a simplex of $\Omega X$.  This last condition reduces to the first by taking $\ell' = \ell$, so we will just establish the more general statement.     Suppose that  at least one of the $l^j$ is a loop in $X$ of length exactly $m$ and the remainder are loops in $X$ either of length $m$ or of length $m-1$. Then the matrix
\[
\sigma_m = 
\left[ \begin{array}{c}
(l^n)_m  \\
(l^{n-1})_m  \\
\vdots\\
(l^1)_m  \\
(l^{0})_m  
\end{array}\right]
\qquad = \qquad
\left[\begin{array}{ccccc}
 x_0  & v^n_1 &  \cdots &  v^n_{m-1} &  x_0 \\
 x_0  & v^{n-1}_1 &  \cdots &  v^{n-1}_{m-1} &  x_0 \\
 \vdots  & \vdots &  \cdots & \vdots &  \vdots \\
 x_0  & v^1_1 &  \cdots &  v^1_{m-1} &  x_0 \\
 x_0  & v^{0}_1 &  \cdots &  v^{0}_{m-1} &  x_0 \\
\end{array}\right]
\]  
satisfies the simplex condition:  the union of the vertices of $X$ from any two adjacent columns is a simplex of $X$.  Then $L_\ell(\sigma)\cup L_{\ell'}(\sigma)$ may be represented array style as
\[
\left[ \begin{array}{c} L_\ell(\sigma) \\
L_{\ell'}(\sigma) \end{array}\right]
= 
\left[ \begin{array}{c}
\ell\cdot(l^n)_m  \\
\ell\cdot(l^{n-1})_m  \\
\vdots\\
\ell\cdot(l^1)_m  \\
\ell\cdot(l^{0})_m \\ 
\ell'\cdot(l^n)_m  \\
\ell'\cdot(l^{n-1})_m  \\
\vdots\\
\ell'\cdot(l^1)_m  \\
\ell'\cdot(l^{0})_m 
\end{array}\right]
\]  
and we check this also satisfies the simplex condition. To this end, suppose that $\ell = (x_0, v_1, \ldots, v_{r-1}, x_0)$  and $\ell' = (x_0, v'_1, \ldots, v'_{r-1}, x_0)$ both have length $r$ (contiguity of loops entails they are of the same length as each other). Then the union of vertices from columns $i$ and $i+1$ of this matrix, for $0 \leq i \leq r-1$ consists only of
$$\{ v_i, v_{i+1}, v'_i, v'_{i+1} \},$$
which is a simplex of $X$ from the contiguity $\ell \sim \ell'$. The union of vertices from columns $i$ and $i+1$ for $r+1 \leq i \leq m+r+1$ consists of the union of vertices from two adjacent columns of $\sigma_m$---again a simplex of $X$ since $\sigma$ is a simplex.  The union of vertices from columns $r$ and $r+1$  consists of $\{x_0 \}$, since $\ell$, $\ell'$ and each $(l^j)_m$ is a loop in $X$ based at $x_0$. Finally, observe that we have 
$$\ell\cdot (l^j)_m = (\ell\cdot  l^j)_{r+m+1} \qquad \text{and} \qquad \ell'\cdot (l^j)_m = (\ell'\cdot  l^j)_{r+m+1},$$
since the effect of ``same-sizing" the rows in the matrix is simply to add an $x_0$ at the right-hand end where needed, in either case.  It follows that 
$$L_\ell(\sigma) \cup L_{\ell'}(\sigma) = \{ \ell\cdot l^0, \ldots, \ell\cdot l^n, \ell'\cdot l^0, \ldots, \ell'\cdot l^n  \}$$
is a simplex of $\Omega X$.

The final assertion, that $L_\ell$ preserves edge-path components, is true of any simplicial map and not just these translations in $\Omega X$. Any simplicial map $f \colon X \to Y$ takes an edge path in $X$ to an edge path in $Y$. It follows that any vertex $v'$ of $X$ that is in the edge-path component of a vertex $v$ will be mapped to $f(v')$ in the edge-path component of $Y$ that contains $f(v)$.
\end{proof}

Notice that the composition of left translations is again a left translation, since we have
$$L_{\ell'} \circ L_\ell = L_{\ell' \cdot \ell} \colon \Omega X \to \Omega X,$$
for vertices $\ell, \ell' \in \Omega X$ where $\ell' \cdot \ell$ denotes concatenation of based loops in $X$.   

\begin{remark}[On Translation in Loop Spaces]
In the topological (continuous) setting, $\Omega X$ is an $H$-space whose multiplication $\mu$ (derived from composition of loops) restricts to left- and right-translation maps $L_\ell$ and $R_\ell$, respectively, by a typical loop $\ell \in \Omega X$:
$$\xymatrix{\Omega X \ar[d]_{\text{incl.}} \ar[rd]^{L_\ell} \\
\{\ell\} \times \Omega X \ar[r]_(0.6){\mu} & X}
\qquad \text{and} \qquad \xymatrix{\Omega X \ar[d]_{\text{incl.}} \ar[rd]^{R_\ell} \\
\Omega X \times \{\ell\}  \ar[r]_(0.6){\mu} & X}
$$
In our simplicial setting, whereas \lemref{lem: left translation} provides some encouragement, in fact right translation by a typical element of $\Omega X$ fails to give a simplicial map.  For instance, suppose we have $X$ a cycle graph with $4$ vertices $\{ x_0, v_1, v_2, v_3 \}$ and edges $\{ (x_0, v_1), (v_1, v_2), (v_2, v_3), (v_3, x_0) \}$.  Take $\ell \in \Omega X$ to be the loop $\ell = (x_0, v_1, v_2, v_3, x_0)$.  In $\Omega X$, we have a simplex $\sigma = \{ \mathbf{x_0}, \mathbf{x_0^1} \}$ (an edge in $\Omega X$).  But the vertex map $R_\ell \colon \Omega X \to \Omega X$, defined by $R_\ell(l) := l\cdot \ell$ for a vertex $l \in \Omega X$ (namely, a loop in $X$), gives 
$$R_\ell(\sigma) = \{   (x_0, x_0, v_1, v_2, v_3, x_0), (x_0, x_0, x_0, v_1, v_2, v_3, x_0) \}.$$
This is not a simplex (an edge) of $\Omega X$, since $v_2$ is not adjacent to $x_0$ in $X$ (and neither is $v_1$ adjacent to $v_3$ in $X$). Similar examples illustrate that the vertex map $\Omega X \times \Omega X \to \Omega X$ given by concatenation of based loops in $X$ does not extend to a simplicial map.  However, as we show next, \emph{right translation by a trivial loop} in $\Omega X$ does give a simplicial map.
\end{remark}

Let $(\Omega X)_0$ denote the edge-path component of the basepoint $\mathbf{x_0} \in \Omega X$. 

\begin{lemma}\label{lem: left vs right translation}
For any $N \geq 0$, let $R_\mathbf{x_0^N} \colon \Omega X \to \Omega X$ be the vertex map defined by $R_\mathbf{x_0^N}(l) := l\cdot \mathbf{x_0^N}$ (right translation by the trivial loop of length $N$).  Then $R_\mathbf{x_0^N}$ extends to a simplicial map of $\Omega X$.  If we restrict $R_\mathbf{x_0^N}$ to $\Omega X[M]$, the subcomplex of $\Omega X$ whose vertices are based loops in $X$ of length exactly $M$,  then we have a contiguity equivalence of simplicial maps $L_\mathbf{x_0^N} \simeq R_\mathbf{x_0^N} \colon \Omega X[M] \to \Omega X[M+N+1]$.  Furthermore, both $L_\mathbf{x_0^N}$ and $R_\mathbf{x_0^N}$ map from the edge-path component $(\Omega X)_0$ of $\Omega X$ to itself and induce the same homomorphism of edge groups, namely, we have
$$(L_\mathbf{x_0^N})_* = (R_\mathbf{x_0^N})_* \colon E\left( (\Omega X)_0, \mathbf{x_0} \right) \to  E\left( (\Omega X)_0, \mathbf{x_0^{N+1}} \right).$$
\end{lemma}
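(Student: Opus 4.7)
The plan hinges on recognising that on the subcomplex $\Omega X[M]$, both maps arise as precomposition with iterated extensions: a direct computation shows $L_{\mathbf{x_0^N}}|_{\Omega X[M]}$ is $l \mapsto l \circ \alpha^{N+1}_0$ and $R_{\mathbf{x_0^N}}|_{\Omega X[M]}$ is $l \mapsto l \circ \alpha^{N+1}_M$, where $\alpha^{N+1}_0, \alpha^{N+1}_M \colon I_{M+N+1} \to I_M$. For simpliciality of $R_{\mathbf{x_0^N}}$ on the whole of $\Omega X$, I would take a simplex $\sigma$ and write out the array for $R_{\mathbf{x_0^N}}(\sigma)$ as in \remref{rem: simplices as arrays}: after same-sizing, it is simply the $\sigma$-array with $N+1$ extra columns of $x_0$ appended on the right, so the column-wise simplex condition passes straight through from the one $\sigma$ satisfies.

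For the contiguity equivalence $L_{\mathbf{x_0^N}} \simeq R_{\mathbf{x_0^N}} \colon \Omega X[M] \to \Omega X[M+N+1]$, I would introduce, for each admissible length-$(N+1)$ sequence $I$, the simplicial map $f_I(l) := l \circ \alpha_I$, and establish the lifting principle: \emph{if $\alpha_I \sim \alpha_J \colon I_{M+N+1} \to I_M$, then $f_I \sim f_J$.} To verify this, for a simplex $\sigma = \{l^0, \ldots, l^n\}$ of $\Omega X[M]$ and an edge $\epsilon$ of $I_{M+N+1}$, the identity
$$\bigcup_j \bigl( f_I(l^j)(\epsilon) \cup f_J(l^j)(\epsilon) \bigr) = \bigcup_j l^j\bigl( \alpha_I(\epsilon) \cup \alpha_J(\epsilon) \bigr),$$
together with the contiguity hypothesis, makes $\alpha_I(\epsilon) \cup \alpha_J(\epsilon)$ a simplex of $I_M$, i.e.\ either a single column index or a pair of consecutive column indices. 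So the image under each $l^j$ contributes at most one or two adjacent columns of the $\sigma$-array, whose union is a simplex of $X$ because $\sigma$ is. Parts (c) and (d) of \propref{prop: contiguity results} supply a chain $\alpha^{N+1}_0 \sim \cdots \sim \alpha^{N+1}_M$, which the lifting principle converts into $L_{\mathbf{x_0^N}} \simeq R_{\mathbf{x_0^N}}$ on $\Omega X[M]$.

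For the identity of induced maps on $E\bigl((\Omega X)_0, \mathbf{x_0}\bigr)$, I would represent a class $[\gamma]$ by $\widehat{\gamma}$ in the canonical form of \propref{prop: loop in X(M)}, so that the middle section $(\mathbf{x_0^M}, \ell^1, \ldots, \ell^p, \mathbf{x_0^M})$ is a based loop in $\Omega X[M]$ at $\mathbf{x_0^M}$. The loops $L_{\mathbf{x_0^N}}(\widehat{\gamma})$ and $R_{\mathbf{x_0^N}}(\widehat{\gamma})$ in $\Omega X$ are both based at $\mathbf{x_0^{N+1}}$ and share the identical up/down ``ladder'' portions, so it suffices to compare their middle sections. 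Because every $f_I$ sends $\mathbf{x_0^M}$ to $\mathbf{x_0^{M+N+1}}$, no basepoint drift occurs through the contiguity equivalence; \propref{prop: contiguity results}(b), applied to the middle viewed as a simplicial map $I_{p+1} \to \Omega X[M]$, then yields that the two middle sections are contiguity equivalent as based loops at $\mathbf{x_0^{M+N+1}}$ in $\Omega X[M+N+1]$. Reassembling, $L_{\mathbf{x_0^N}}(\widehat{\gamma}) \approx R_{\mathbf{x_0^N}}(\widehat{\gamma})$ as loops at $\mathbf{x_0^{N+1}}$, which gives the claimed equality $(L_{\mathbf{x_0^N}})_* = (R_{\mathbf{x_0^N}})_*$. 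Component preservation is immediate: both maps send $\mathbf{x_0}$ to $\mathbf{x_0^{N+1}}$, which lies in $(\Omega X)_0$ via the extension path $\mathbf{x_0}, \mathbf{x_0^1}, \ldots, \mathbf{x_0^{N+1}}$, and any simplicial map preserves edge-path components. The main obstacle is the lifting principle of the second paragraph; once it is in hand, the remaining work is routine bookkeeping.
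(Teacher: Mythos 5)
Your proposal is correct and follows essentially the same route as the paper: simpliciality of $R_{\mathbf{x_0^N}}$ via the appended-columns array, the contiguity equivalence via a chain interpolating between repeating the $0$th and the $M$th vertex, and the equality of induced homomorphisms via the canonical form of \propref{prop: loop in X(M)} together with \propref{prop: contiguity results}(b) applied to the middle section, with the endpoints held fixed at $\mathbf{x_0^{M+N+1}}$. The only cosmetic difference is that you package the interpolation as a lifting principle ($\alpha_I \sim \alpha_J$ implies $f_I \sim f_J$) at the level of maps $I_{M+N+1} \to I_M$, whereas the paper checks $\alpha_i^* \sim \alpha_{i+1}^* \colon \Omega X[M] \to \Omega X[M+1]$ directly one level at a time and then composes; the underlying double-array computation is identical.
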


\begin{proof}
Suppose that 
$\sigma = \{ l^0, \cdots, l^n \}$
is a simplex of $\Omega X$. For the first assertion, we want to confirm that $R_\mathbf{x_0^N}(\sigma)$ is a simplex of $\Omega X$.     Suppose that  at least one of the $l^j$ is a loop in $X$ of length exactly $m$ and the remainder are loops in $X$ either of length $m$ or of length $m-1$. Then  the matrix
\[
\sigma_m = 
\left[ \begin{array}{c}
(l^n)_m  \\
(l^{n-1})_m  \\
\vdots\\
(l^1)_m  \\
(l^{0})_m  
\end{array}\right]
\qquad = \qquad
\left[\begin{array}{ccccc}
 x_0  & v^n_1 &  \cdots &  v^n_{m-1} &  x_0 \\
 x_0  & v^{n-1}_1 &  \cdots &  v^{n-1}_{m-1} &  x_0 \\
 \vdots  & \vdots &  \cdots & \vdots &  \vdots \\
 x_0  & v^1_1 &  \cdots &  v^1_{m-1} &  x_0 \\
 x_0  & v^{0}_1 &  \cdots &  v^{0}_{m-1} &  x_0 \\
\end{array}\right]
\]  
satisfies the simplex condition:  the union of the vertices of $X$ from any two adjacent columns is a simplex of $X$.  Now $R_\mathbf{x_0^N}$ maps the vertices of $\sigma$ to vertices of $\Omega X$ that are loops in $X$ of length $m+N+1$ and possibly of length $m+N$.  If $l^j$ has length $m$, then we have $(l^j)_m = l^j$ and $\left(R_\mathbf{x_0^N}(l^j)\right)_{m+N+1} = R_\mathbf{x_0^N}(l^j) = R_\mathbf{x_0^N}\left( (l^j)_m\right)$.   If $l^j$ has length $m-1$, then we have $(l^j)_m = l^j\cdot \mathbf{x_0}$ and $\left(R_\mathbf{x_0^N}(l^j)\right)_{m+N+1} = l^j \cdot \mathbf{x_0^N} \cdot \mathbf{x_0} = l^j \cdot \mathbf{x_0}\cdot \mathbf{x_0^N}  = R_\mathbf{x_0^N}\left( (l^j)_m\right)$. So, $R_\mathbf{x_0^N}(\sigma)$ may be represented array style as
$$\left[\begin{array}{cccccccc}
 x_0  & v^n_1 &  \cdots &  v^n_{m-1} &  x_0 & x_0 & \cdots & x_0 \\
 x_0  & v^{n-1}_1 &  \cdots &  v^{n-1}_{m-1} &  x_0  & x_0 &\cdots & x_0 \\
 \vdots  & \vdots &  \cdots & \vdots &  \vdots & \vdots &  \cdots & \vdots\\
 x_0  & v^1_1 &  \cdots &  v^1_{m-1} &  x_0   & x_0 & \cdots & x_0\\
 x_0  & v^{0}_1 &  \cdots &  v^{0}_{m-1} &  x_0 & x_0 & \cdots & x_0 \\
\end{array}\right].$$
This evidently satisfies the simplex condition, given that the left half of the matrix does.  

Now we show the contiguity equivalence of simplicial maps $L_\mathbf{x_0^N} \simeq R_\mathbf{x_0^N} \colon \Omega X[M] \to \Omega X[M+N+1]$. 
Refer to the notation from \defref{def: extensions}.  Restrict to vertices of $\Omega X$ that are in $\Omega X[M]$, and define vertex maps
$$\alpha_i^* \colon \Omega X[M] \to \Omega X[M+1]$$
for each $i = 0, \ldots, M$ by setting $\alpha_i^*(l) := l\circ \alpha_i$.   Namely, if $l = (x_0, v_1, \ldots, v_i, \ldots, v_{M-1}, x_0)$, then $\alpha_i^*(l) = (x_0, v_1, \ldots, v_i, v_i, \ldots, v_{M-1}, x_0)$ (repeat vertex $v_i$). 
Notice that we have $\alpha_0^* = L_\mathbf{x_0}$ and $\alpha_M^* = R_\mathbf{x_0}$. First we show that we  have a contiguity of simplicial maps $\alpha_i^* \sim \alpha_{i+1}^* \colon \Omega X[M] \to \Omega X[M+1]$ for each $i = 0, \ldots, M_1$.
Suppose that 
$\sigma = \{ l^0, \cdots, l^n \}$
is a simplex of $\Omega X[M]$. We want to confirm that $\alpha_i^* (\sigma) \cup \alpha_{i+1}^* (\sigma)$ is a simplex of $\Omega X$ (actually of $\Omega X[M+1]$).     Each $l^j$ is a loop in $X$ of length exactly $M$. Then  the matrix
\[
\sigma = 
\left[\begin{array}{ccccc}
 x_0  & v^n_1 &  \cdots &  v^n_{M-1} &  x_0 \\
 x_0  & v^{n-1}_1 &  \cdots &  v^{n-1}_{M-1} &  x_0 \\
 \vdots  & \vdots &  \cdots & \vdots &  \dots \\
 x_0  & v^1_1 &  \cdots &  v^1_{M-1} &  x_0 \\
 x_0  & v^{0}_1 &  \cdots &  v^{0}_{M-1} &  x_0 \\
\end{array}\right] = 
\left[\begin{array}{ccccc}
 \mathbf{v_0}  & \mathbf{v_1} &  \cdots &  \mathbf{v_{M-1}} &  \mathbf{v_0} 
\end{array}\right]
\]  
satisfies the simplex condition:  the union of the vertices of $X$ from any two adjacent columns is a simplex of $X$.  In the above, we have used column vector notation for the columns of the matrix $\sigma$.  With the same notation, the union $\alpha_i^* (\sigma) \cup \alpha_{i+1}^* (\sigma)$ may be represented array style as the ``double" matrix
\[
\left[\begin{array}{c}
 \alpha_i^* (\sigma)\\
 \alpha_{i+1}^* (\sigma)
\end{array}\right] = 
\left[\begin{array}{ccccccccc}
\mathbf{v_0}  & \mathbf{v_1} &  \cdots &  \mathbf{v_i} & \mathbf{v_i} & \mathbf{v_{i+1}} & \cdots & \mathbf{v_{M-1}} &  \mathbf{v_0} \\
 \mathbf{v_0}  & \mathbf{v_1} &  \cdots &  \mathbf{v_i} & \mathbf{v_{i+1}} & \mathbf{v_{i+1}} & \cdots & \mathbf{v_{M-1}} &  \mathbf{v_0}
\end{array}\right].
\]  
It is easy to see that vertices from adjacent columns of this matrix have union a simplex of $X$, just as the same condition is satisfied by the columns of $\sigma$. 
The contiguity  $\alpha_i^* \sim \alpha_{i+1}^* \colon \Omega X[M] \to \Omega X[M+1]$ follows.  Thus we have a contiguity equivalence
$$L_\mathbf{x_0} = \alpha_0^* \sim \alpha_{1}^* \sim \cdots \sim \alpha_{M}^* =  R_\mathbf{x_0} \colon \Omega X[M] \to \Omega X[M+1].$$
This contiguity equivalence $L_\mathbf{x_0} \simeq   R_\mathbf{x_0}$ may be extended to one 
$$L_\mathbf{x^N_0} \simeq   R_\mathbf{x^N_0} \colon \Omega X[M] \to \Omega X[M+N+1]$$
by writing 
$$L_\mathbf{x^N_0} =  L_\mathbf{x_0} \circ L_\mathbf{x_0} \circ \cdots \circ L_\mathbf{x_0} \colon \Omega X[M] \to \Omega X[M+1]\to \cdots \to \Omega X[M+N+1]$$
and likewise for $R_\mathbf{x^N_0}$, and applying  part (b) of \propref{prop: contiguity results}.

For any $M \geq 0$, write $\gamma^M_0$ for the edge path in $(\Omega X)_0$
$$\gamma^M_0 = (\mathbf{x_0}, \mathbf{x^1_0}, \ldots, \mathbf{x^{M}_0}).$$
Both $L_{\mathbf{x_0^{N}}}$ and $R_{\mathbf{x_0^{N}}}$ map the component $(\Omega X)_0$ of $\Omega X$ to itself, since we have $L_{\mathbf{x_0^{N}}} (\mathbf{x_0}) = R_{\mathbf{x_0^{N}}}(\mathbf{x_0}) = \mathbf{x_0^{N+1}}$, which is connected to $\mathbf{x_0}$ by the path $\gamma^{N+1}_0$.  Hence we have induced homomorphisms of edge groups
$$(L_\mathbf{x_0^N})_* , (R_\mathbf{x_0^N})_* \colon E\left( (\Omega X)_0, \mathbf{x_0} \right) \to  E\left( (\Omega X)_0, \mathbf{x_0^{N+1}} \right).$$
We will eventually show that these are the same \emph{isomorphism}, but for the time being will just show they are the same homomorphism.
 
By \propref{prop: loop in X(M)}, we may assume that a typical  $[\alpha] \in E\left( (\Omega X)_0, \mathbf{x_0}\right)$ is represented by a loop that is the concatenation of edge paths in $\Omega X$
$$\alpha = \gamma^M_0\cdot \gamma \cdot \widetilde{\gamma^M_0},$$
for some $M$ and with middle section
$$\gamma = (\mathbf{x^{M}_0}, l^1, \ldots, l^p, \mathbf{x^{M}_0})$$
an edge loop in $\Omega X[M]$ based at $\mathbf{x^{M}_0}$. Then 
 $(L_{\mathbf{x_0^{N}}})_* ([\alpha])$ may be represented by the concatenation of edge paths in $(\Omega X)_0$
$$L_{\mathbf{x_0^{N}}} (\gamma^M_0) \cdot L_{\mathbf{x_0^{N}}} (\gamma) \cdot L_{\mathbf{x_0^{N}}} ( \widetilde{\gamma^M_0}).$$
Now $L_{\mathbf{x_0^{N}}} (\gamma^M_0) = R_{\mathbf{x_0^{N}}} (\gamma^M_0)$ and  $L_{\mathbf{x_0^{N}}} ( \widetilde{\gamma^M_0}) = R_{\mathbf{x_0^{N}}} ( \widetilde{\gamma^M_0})$, as all vertices involved here consist of repeats of $x_0$.   For the middle section, we may view $\gamma$ as a simplicial map $\gamma \colon I_{p+1} \to \Omega X[M]$ and apply part (b) of \propref{prop: contiguity results} together with the contiguity equivalence shown above to write a contiguity equivalence
$$L_{\mathbf{x_0^{N}}} (\gamma) = L_{\mathbf{x_0^{N}}} \circ \gamma \simeq R_{\mathbf{x_0^{N}}} \circ \gamma = R_{\mathbf{x_0^{N}}} (\gamma)$$
of loops in $\Omega X[M]$.  Furthermore, this contiguity equivalence of loops  may be seen to leave the endpoints fixed at $\mathbf{x_0^{M+N+1}}$.  Thus, it may be spliced into a contiguity equivalence 
$$
\begin{aligned}L_{\mathbf{x_0^{N}}} (\alpha)  &= L_{\mathbf{x_0^{N}}} (\gamma^M_0) \cdot L_{\mathbf{x_0^{N}}} (\gamma) \cdot L_{\mathbf{x_0^{N}}} ( \widetilde{\gamma^M_0})\\
&= R_{\mathbf{x_0^{N}}} (\gamma^M_0) \cdot L_{\mathbf{x_0^{N}}} (\gamma) \cdot R_{\mathbf{x_0^{N}}} ( \widetilde{\gamma^M_0})\\
&\simeq R_{\mathbf{x_0^{N}}} (\gamma^M_0) \cdot R_{\mathbf{x_0^{N}}} (\gamma) \cdot R_{\mathbf{x_0^{N}}} ( \widetilde{\gamma^M_0}) = R_{\mathbf{x_0^{N}}} (\alpha).
\end{aligned}
$$
The equality $(L_\mathbf{x_0^N})_* = (R_\mathbf{x_0^N})_* $ of homomorphisms follows. 
\end{proof}

 Let $\ell \in \Omega X$ be any vertex. As a simplicial map, $L_\ell$ maps the edge-path component $(\Omega X)_0$ to the edge-path component $(\Omega X)_{\ell\cdot x_0} = (\Omega X)_{\overline{\ell}}$.  Now this is the same edge-path component as $(\Omega X)_{\ell}$, since $\{ \ell, \overline{\ell} \}$ is a simplex (an edge) of $\Omega X$.  Thus, we have an induced homomorphism of edge groups
$$(L_\ell)_* \colon E\left( (\Omega X)_0, \mathbf{x_0}\right) \to  E\left( (\Omega X)_\ell, \overline{\ell} \right).$$

\begin{theorem}\label{thm: homogeneous components}
Let $(\Omega X)_0$ denote the edge-path component of the constant loop $\mathbf{x_0}$ and let $(\Omega X)_\gamma$ denote the edge-path component that contains  $\ell$, some edge loop in $X$, as a vertex. Then we have an isomorphism of edge groups
$$\xymatrix@1{E\left((\Omega X)_0, \mathbf{x_0}\right) \ar[r]^{(L_\ell)_*}_{\cong} &  E\left( (\Omega X)_\ell, \overline{\ell} \right) \ar[r]^{\Phi_e}_{\cong} & E\left((\Omega X)_\ell, \ell\right),}$$
with $\Phi_e$ the change of basis isomorphism from $e = ( \ell,  \overline{\ell} )$, the edge path of length $1$ from $\ell$ to $\overline{\ell}$ in $\Omega X$.  
\end{theorem}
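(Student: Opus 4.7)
The factor $\Phi_e$ is an isomorphism by \lemref{lem: change of basepoint}, so the content is to show $(L_\ell)_*$ is an isomorphism. My plan is to exhibit $(L_{\widetilde\ell})_*$ as a two-sided inverse of $(L_\ell)_*$, modulo change-of-basepoint isomorphisms. By associativity of concatenation of based loops in $X$, left translation satisfies $L_{\widetilde\ell} \circ L_\ell = L_{\widetilde\ell \cdot \ell}$ and $L_\ell \circ L_{\widetilde\ell} = L_{\ell \cdot \widetilde\ell}$ as simplicial maps $\Omega X \to \Omega X$, so it suffices to show that each of $(L_{\widetilde\ell \cdot \ell})_*$ and $(L_{\ell \cdot \widetilde\ell})_*$ is an isomorphism. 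This would imply $(L_\ell)_*$ has both a left and a right inverse among group homomorphisms (after composing with appropriate change-of-basepoint isomorphisms), and hence is itself an isomorphism.

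\lemref{lem: reverse as inverse} provides a contiguity equivalence $\widetilde\ell \cdot \ell \simeq \mathbf{x_0^{2m+1}}$ of loops in $X$, and applying it to $\widetilde\ell$ in place of $\ell$ gives $\ell \cdot \widetilde\ell \simeq \mathbf{x_0^{2m+1}}$. Iterating the second claim of \lemref{lem: left translation} through these contiguity sequences yields contiguity equivalences of simplicial maps $L_{\widetilde\ell \cdot \ell} \simeq L_{\mathbf{x_0^{2m+1}}}$ and $L_{\ell \cdot \widetilde\ell} \simeq L_{\mathbf{x_0^{2m+1}}}$, both $\Omega X \to \Omega X$. \lemref{lem: contiguous induced hom} then says each of $(L_{\widetilde\ell \cdot \ell})_*$ and $(L_{\ell \cdot \widetilde\ell})_*$ differs from $(L_{\mathbf{x_0^{2m+1}}})_*$ only by composition with a change-of-basepoint isomorphism. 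Since change-of-basepoint isomorphisms are always isomorphisms, the problem reduces to showing that $(L_{\mathbf{x_0^N}})_*$ is an isomorphism for every $N \geq 0$; by \lemref{lem: left vs right translation}, this is equivalent to showing $(R_{\mathbf{x_0^N}})_*$ is an isomorphism.

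The main technical step is to identify $(R_{\mathbf{x_0^N}})_* \colon E\bigl((\Omega X)_0, \mathbf{x_0}\bigr) \to E\bigl((\Omega X)_0, \mathbf{x_0^{N+1}}\bigr)$ with the change-of-basepoint isomorphism along the path $\gamma^{N+1}_0 = (\mathbf{x_0}, \mathbf{x^1_0}, \ldots, \mathbf{x^{N+1}_0})$ in $(\Omega X)_0$. Concretely, for any edge loop $\alpha$ in $(\Omega X)_0$ based at $\mathbf{x_0}$, I would show that $R_{\mathbf{x_0^N}}(\alpha)$ is extension-contiguity equivalent, as an edge loop in $\Omega X$, to the conjugate $\widetilde{\gamma^{N+1}_0} \cdot \alpha \cdot \gamma^{N+1}_0$. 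The argument begins by applying \propref{prop: loop in X(M)} to normalize $\alpha$ to a loop of the form $\gamma^{M}_0 \cdot \beta \cdot \widetilde{\gamma^{M}_0}$ with $\beta$ an edge loop in $\Omega X[M]$ based at $\mathbf{x^{M}_0}$; one then uses the contiguity equivalence $L_{\mathbf{x_0^N}} \simeq R_{\mathbf{x_0^N}}$ available on $\Omega X[M]$ (established inside the proof of \lemref{lem: left vs right translation}), together with the trivial-extension moves furnished by \lemref{lem: 3-simplex extensions}, to massage the two loops into a common extension-contiguity class. This bookkeeping, while systematic, is where the main obstacle lies: the array-style simplex conditions must be checked carefully as trivial extensions are absorbed into the initial and final segments of $\widetilde{\gamma^{N+1}_0}$ and $\gamma^{N+1}_0$.

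Once $(R_{\mathbf{x_0^N}})_*$ is identified as a change-of-basepoint isomorphism, both $(L_{\widetilde\ell \cdot \ell})_*$ and $(L_{\ell \cdot \widetilde\ell})_*$ are exhibited as compositions of change-of-basepoint isomorphisms, each of which is an isomorphism. Consequently $(L_\ell)_*$ has both a left and a right inverse among group homomorphisms, so it is itself an isomorphism. Composing with the isomorphism $\Phi_e$ of \lemref{lem: change of basepoint} then yields the composite isomorphism asserted in the theorem.
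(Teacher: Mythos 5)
Your overall strategy coincides with the paper's: factor $(L_{\widetilde{\ell}})_* \circ (L_\ell)_*$ as $(L_{\widetilde{\ell}\cdot\ell})_*$, use \lemref{lem: reverse as inverse} and \lemref{lem: left translation} to replace this, up to a change of basepoint supplied by \lemref{lem: contiguous induced hom}, by $(L_{\mathbf{x_0^{2m+1}}})_*$, pass to $(R_{\mathbf{x_0^{2m+1}}})_*$ via \lemref{lem: left vs right translation}, and identify the latter with a change-of-basepoint isomorphism along $\gamma_0^{2m+2}$ by inductively absorbing the trailing copies of $x_0$. That chain is sound and yields injectivity of $(L_\ell)_* \colon E\left((\Omega X)_0, \mathbf{x_0}\right) \to E\left((\Omega X)_\ell, \overline{\ell}\right)$, exactly as in the paper.

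The gap is in the surjectivity half. Your reduction (via \propref{prop: loop in X(M)} and \lemref{lem: left vs right translation}, both of which are stated only for loops based at $\mathbf{x_0}$ in $(\Omega X)_0$) establishes that $(L_{\ell\cdot\widetilde{\ell}})_*$ is an isomorphism when based at $\mathbf{x_0}$. But the factorization of that map is $(L_\ell)_* \circ (L_{\widetilde{\ell}})_*$ with outer factor $(L_\ell)_* \colon E\left((\Omega X)_{\widetilde{\ell}}, \overline{\widetilde{\ell}}\right) \to E\left((\Omega X)_0, \overline{\ell\cdot\widetilde{\ell}}\right)$. Its surjectivity says nothing about the map in the theorem: these are restrictions of the same simplicial map to \emph{different components} ($(\Omega X)_{\widetilde{\ell}}$ versus $(\Omega X)_0$ as source, $(\Omega X)_0$ versus $(\Omega X)_\ell$ as target), and change-of-basepoint isomorphisms act only within a single component, so no insertion of maps $\Phi_\eta$ can convert one into the other. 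Hence the claim that $(L_\ell)_*$ acquires a right inverse ``after composing with appropriate change-of-basepoint isomorphisms'' does not follow. The repair is to base the second composition at $\ell \in (\Omega X)_\ell$, so that its outer factor becomes $(L_\ell)_* \colon E\left((\Omega X)_0, \widetilde{\ell}\cdot\ell\right) \to E\left((\Omega X)_\ell, \ell\cdot\widetilde{\ell}\cdot\ell\right)$ --- a map between the same pair of components as the one in the theorem, to which surjectivity then transfers by a commuting square of genuine change-of-basepoint isomorphisms. The cost is that the entire technical chain (the normal form for loops, the comparison $L_{\mathbf{x_0^N}} \simeq R_{\mathbf{x_0^N}}$, and the inductive identification with a change of basepoint) must be rerun for loops in $(\Omega X)_\ell$ based at $\ell$; this is precisely the ``\emph{mutatis mutandis}'' portion of the paper's proof that your write-up omits.
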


\begin{proof}
Let $\widetilde{\ell}$ denote the reverse loop of $\ell$ and suppose each is a loop of length $r$ in $X$.  As we observed above, we have $L_{\widetilde{\ell}} \circ L_\ell = L_{\widetilde{\ell}\cdot \ell}$.  Now we have $L_{\widetilde{\ell}\cdot \ell}(\mathbf{x_0}) = 
\widetilde{\ell}\cdot \ell\cdot \mathbf{x_0} =  \overline{\widetilde{\ell}\cdot\ell}$.  But this vertex of $\Omega X$ is in the same edge-path component of $\Omega X$ as $\mathbf{x_0}$.  To see this, recall the standard 
contiguity equivalence $\mathbf{x^{2r+1}_0} \sim L_1 \sim \cdots \sim L_{r-1} \sim \widetilde{\ell}\cdot\ell$ of loops in $X$ 
used to show that the reverse loop gives the inverse in the edge group, from \lemref{lem: reverse as inverse}.   Then trivially extending each loop in this contiguity equivalence gives an edge path in $\Omega X$
$$\eta = \left( \mathbf{x^{2r+2}_0}, L_1\cdot\mathbf{x_0}, \ldots, L_{r-1}\cdot\mathbf{x_0}, \overline{\widetilde{\ell}\cdot\ell} \right)$$ 
from $\mathbf{x_0^{2r+1}}$ to $\widetilde{\ell}\cdot \ell$. Then we may concatenate this with the edge path $(\mathbf{x_0}, \mathbf{x^1_0}, \ldots, \mathbf{x^{2r+2}_0})$ to display an edge path in $\Omega X$ from $\mathbf{x_0}$ to $\overline{\widetilde{\ell}\cdot\ell}$.
It follows from this discussion that we have a homomorphism
$$(L_{\widetilde{\ell}})_* \circ (L_\ell)_* = (L_{\widetilde{\ell}\cdot \ell})_* \colon  E\left( (\Omega X)_0, \mathbf{x_0}\right) \to E\left( (\Omega X)_0, \overline{\widetilde{\ell}\cdot\ell}\right).$$
First we will argue that this homomorphism is an isomorphism.  

The contiguity equivalence $\widetilde{\ell}\cdot\ell \sim  L_{r-1} \sim \cdots \sim L_1 \sim  \mathbf{x^{2r+1}_0}$ of loops in $X$ implies a corresponding contiguity equivalence of simplical maps
$$L_{ \widetilde{\ell}\cdot\ell} \sim L_{L_{r-1}} \sim \cdots \sim L_{L_1} \sim  L_{\mathbf{x^{2r+1}_0}} \colon \Omega X \to \Omega X,$$
from \lemref{lem: left translation}.  Furthermore, the edge path $\eta$ in $\Omega X$ displayed above is exactly the ``trace" of the basepoint $\mathbf{x_0}$ of $\Omega X$ under the maps in this contiguity equivalence (in reverse order).  From \lemref{lem: contiguous induced hom}, therefore, we obtain the following commutative diagram of homomorphisms of edge groups: 
\begin{equation}\label{eq: translation iso1}
\xymatrix{
E\left( (\Omega X)_0, \mathbf{x_0}\right) \ar[r]^{(L_{\widetilde{\ell}\cdot \ell})_*} \ar[rd]_{(L_{\mathbf{x^{2r+1}_0}})_*}  & E\left( (\Omega X)_0, \overline{\widetilde{\ell}\cdot\ell}\right) \ar[d]^{\Phi_\eta}_{\cong} \\
  &    E\left( (\Omega X)_0, \mathbf{x^{2r+2}_0}\right)
}
\end{equation}

From \lemref{lem: left vs right translation}, we may replace the homomorphism $(L_{\mathbf{x^{2r+1}_0}})_*$ in (\ref{eq: translation iso1}) by $(R_{\mathbf{x^{2r+1}_0}})_*$, its right-translation counterpart.
Now we show that the following diagram of homomorphisms of edge groups commutes:
\begin{equation}\label{eq: translation iso2}
\xymatrix{
E\left( (\Omega X)_0, \mathbf{x_0}\right) \ar[r]^{(R_{\mathbf{x^{2r+1}_0}})_*} \ar@{=}[rd]_{\mathrm{id}}  & E\left( (\Omega X)_0, \mathbf{x^{2r+2}_0}\right)
 \ar[d]^{\Phi_{\gamma^{2r+2}_0}}_{\cong} \\
  &    E\left( (\Omega X)_0, \mathbf{x_0}\right)
}
\end{equation}
It will follow that all the homomorphisms displayed in (\ref{eq: translation iso1}) and (\ref{eq: translation iso2}) are isomorphisms.

As in \lemref{lem: left vs right translation}, for any $M \geq 0$, write $\gamma^M_0$ for the edge path in $(\Omega X)_0$
$$\gamma^M_0 = (\mathbf{x_0}, \mathbf{x^1_0}, \ldots, \mathbf{x^{M}_0}).$$
By \propref{prop: loop in X(M)}, we may assume that a typical  $[\alpha] \in E\left( (\Omega X)_0, \mathbf{x_0}\right)$ is represented by a loop that is the concatenation of edge paths in $\Omega X$
$$\alpha = \gamma^M_0\cdot \gamma \cdot \widetilde{\gamma^M_0},$$
for some $M$ and with middle section
$$\gamma = (\mathbf{x^{M}_0}, l^1, \ldots, l^p, \mathbf{x^{M}_0})$$
an edge loop in $\Omega X[M]$ based at $\mathbf{x^{M}_0}$. Then 
 $\Phi_{\gamma_0^{2r+2}} \circ (R_{\mathbf{x_0^{2r+1}}})_* ([\alpha])$ may be represented by the concatenation of edge paths in $(\Omega X)_0$
$$\gamma^{2r+2}_0 \cdot  R_{\mathbf{x_0^{2r+1}}} (\gamma^M_0) \cdot R_{\mathbf{x_0^{2r+1}}} (\gamma) \cdot R_{\mathbf{x_0^{2r+1}}} ( \widetilde{\gamma^M_0}) \cdot \widetilde{\gamma^{2r+2}_0}$$
that, by removing a repeat of the vertex $\mathbf{x_0^{2r+2}}$ of $\Omega X$ from the start of the section $R_{\mathbf{x_0^{2r+1}}} (\gamma^M_0)$ and the end of its reverse, is extension-contiguity equivalent to
$$\gamma^{2r+2+M}_0 \cdot   R_{\mathbf{x_0^{2r+1}}} (\gamma) \cdot \widetilde{\gamma^{2r+2+M}_0}.$$
We  now show by induction that there is an extension-contiguity equivalence
$$\gamma^{2r+2+M}_0 \cdot R_{\mathbf{x_0^{2r+1}}} (\gamma)\cdot  \widetilde{\gamma^{2r+2+M}_0} \approx \gamma^{2r+2+M-i}_0 \cdot R_{\mathbf{x_0^{2r+1-i}}} (\gamma) \cdot  \widetilde{\gamma^{2r+2+M-i}_0}$$
for each $i = 0, \ldots, 2r+2$.  By $R_{\mathbf{x_0^{-1}}}$ we intend the identity. Induction starts with $i=0$, where there is nothing to show. Now consider the section 
$$
\begin{aligned}
( \mathbf{x^{2r+2+M-(i+1)}_0}, &\  \mathbf{x^{2r+2+M-i}_0}, \\
\mathbf{x^{M}_0} \cdot \mathbf{x^{2r+1-i}_0},& \ l_1\cdot \mathbf{x^{2r+1-i}_0}, \ldots, l_p\cdot \mathbf{x^{2r+1-i}_0}, \mathbf{x^{M}_0} \cdot \mathbf{x^{2r+1-i}_0},\\
\ \  \mathbf{x^{2r+2+M-i}_0},& \  \mathbf{x^{2r+2+M-(i+1)}_0})
\end{aligned}
$$
of $\gamma^{2r+2+M-i}_0 \cdot R_{\mathbf{x_0^{2r+1-i}}} (\gamma)  \cdot  \widetilde{\gamma^{2n+2+M-i}_0}$ from the penultimate vertex of $\gamma^{2r+2+M-i}_0$ through the $2$nd vertex of $\widetilde{\gamma^{2r+2+M-i}_0}$.

Then part (a) of \lemref{lem: 3-simplex extensions} gives a contiguity between this section and the path 
$$
\begin{aligned}
& (\mathbf{x^{2r+2+M-(i+1)}_0}, \  \mathbf{x^{2r+2+M-(i+1)}_0}, \\
& \ \ \mathbf{x^{M}_0} \cdot \mathbf{x^{2r+1-(i+1)}_0}, \ l_1\cdot \mathbf{x^{2r+1-(i+1)}_0}, \ldots, l_p\cdot \mathbf{x^{2r+1-(i+1)}_0}, \mathbf{x^{M}_0} \cdot \mathbf{x^{2r+1-(i+1)}_0},\\
&\ \ \ \  \mathbf{x^{2r+2+M-(i+1)}_0}, \  \mathbf{x^{2r+2+M-(i+1)}_0} )
\end{aligned}
$$
Since this contiguity leaves the endpoints fixed, it may be spliced into a contiguity
$$
\begin{aligned}
&\gamma^{2n+2+M-i}_0 \cdot R_{\mathbf{x_0^{2r+1-i}}} (\gamma) \cdot  \widetilde{\gamma^{2r+1+M-i}_0} \\
& \sim \gamma^{2r+2+M-(i+1)}_0 \cdot \mathbf{x^{2r+2+M-(i+1)}_0} \cdot R_{\mathbf{x_0^{2r+1-(i+1)}}} (\gamma) \cdot \mathbf{x^{2r+2+M-(i+1)}_0}\cdot  \widetilde{\gamma^{2r+2+M-(i+1})_0}.
\end{aligned}
$$
Working up to extension-contiguity equivalence, we may remove the repeats of $\mathbf{x^{2r+2+M-(i+1)}_0}$ to obtain an extension-contiguity equivalence
$$
\begin{aligned}
\gamma^{2r+2+M-i}_0 \cdot &R_{\mathbf{x_0^{2r+1-i}}} (\gamma) \cdot  \widetilde{\gamma^{2r+2+M-i}_0} \\
 &\approx \gamma^{2r+2+M-(i+1)}_0  \cdot R_{\mathbf{x_0^{2r+1-(i+1)}}} (\gamma) \cdot  \widetilde{\gamma^{2r+2+M-(i+1})_0}.
\end{aligned}
$$
This completes the induction.  It follows that we have the extension-contiguity equivalence
$$  
\gamma^{2r+2+M}_0 \cdot R_{\mathbf{x_0^{2r+1}}} (\gamma) \cdot  \widetilde{\gamma^{2r+2+M}_0} \approx \gamma^{M}_0 \cdot R_{\mathbf{x_0^{-1}}} (\gamma) \cdot  \widetilde{\gamma^{M}_0} = \alpha
$$
that shows  the diagram (\ref{eq: translation iso2}) commutes.  Combining diagrams (\ref{eq: translation iso1})  and (\ref{eq: translation iso2}), it follows that 
$(L_{\mathbf{x_0^{2r+1}}})_*$, $(R_{\mathbf{x_0^{2r+1}}})_*$ and 
$(L_{\widetilde{\ell}})_* \circ (L_\ell)_* = (L_{\widetilde{\ell}\cdot \ell})_*$ are all isomorphisms of edge groups and hence that 
$(L_\ell)_* \colon E\left((\Omega X)_0, \mathbf{x_0}\right) \to  E\left( (\Omega X)_\ell, \overline{\ell} \right)$ is injective.

A similar argument, \emph{mutatis mutandis}, shows that we also have a 
commutative diagram of homomorphisms as follows.
\begin{equation}\label{eq L_gamma iso2}
\xymatrix{
E\left( (\Omega X)_\ell, \ell\right) \ar[r]^{(L_{\widetilde{\ell}})_*} \ar@{=}[rrd]_{\mathrm{id}} & E\left( (\Omega X)_0, \widetilde{\ell}\cdot \ell\right) \ar[r]^{(L_{\ell})_*} & E\left( (\Omega X)_\ell, \ell\cdot \widetilde{\ell}\cdot\ell \right) \ar[d]^{\Phi_\zeta}_{\cong} \\
  &   & E\left( (\Omega X)_\ell, \ell\right)
}
\end{equation}
Here, the change of basepoint isomorphism $\Phi_\zeta$ is that induced by the path $\zeta = (\ell)\cdot L_\ell(\gamma_0^{2r+1}) \cdot L_\ell(\Gamma)$, 
with $\Gamma = (\mathbf{x^{2r+1}_0}, L_1, \ldots, L_{n-1},\widetilde{\ell}\cdot\ell)$ the standard path used in \lemref{lem: reverse as inverse}. (The contiguity equivalence of \lemref{lem: reverse as inverse} is translated into a path in $\Omega X$ as in \propref{prop: left-right homotopy}.) We omit the details of this as they involve the same ingredients as  above.

Now (\ref{eq L_gamma iso2}) implies that 
$$(L_{\ell})_* \colon E\left( (\Omega X)_0, \widetilde{\ell}\cdot \ell\right) \to  E\left( (\Omega X)_\ell, \ell\cdot \widetilde{\ell}\cdot\ell \right)$$
is surjective.  But the following diagram commutes, as is easily checked:
$$\xymatrix{
E\left( (\Omega X)_0, \widetilde{\ell}\cdot \ell\right) \ar[r]^{(L_{\ell})_*}  &  E\left( (\Omega X)_\ell, \ell\cdot \widetilde{\ell}\cdot\ell \right)  \\
E\left( (\Omega X)_0, \mathbf{x_0}\right) \ar[u]_{\cong}^{\Phi_{\widetilde{\lambda}}} \ar[r]_{(L_\ell)_*}  &  E\left( (\Omega X)_\ell, \overline{\ell} \right) \ar[u]_{\cong}^{\Phi_{L_{\widetilde \gamma}(\lambda)}}
}
$$
The vertical maps are change of basepoint isomorphisms induced by the (reverses of) the path $\lambda = \gamma_0^{2r+1} \cdot \Gamma$  and its translate  $L_{\ell}(\lambda)$. It follows that both horizontal maps are surjections.  Thus, we deduce that
$$(L_\gamma)_* \colon E\left( (\Omega X)_0, \mathbf{x_0}\right)  \to  E\left( (\Omega X)_\gamma, \overline{\gamma}\right)$$
is an isomorphism which, with a further change of basepoint isomorphism 
$$E\left( (\Omega X)_\gamma, \overline{\gamma}\right) \cong E\left( (\Omega X)_\gamma, \gamma\right)$$
gives the desired isomorphism of edge groups.
\end{proof}

\section{Face Spheres in a Simplicial Complex; the Face Group}\label{sec: face group}

In related work, the first-named author and others have described a counterpart to the edge group of a simplicial complex that corresponds to the second homotopy group as the edge group corresponds to the fundamental group. In \cite{L-S-S}, we describe a group $F(X, x_0)$ associated to a (based) simplicial complex $X$---the \emph{face group} of $X$---that satisfies $F(X, x_0) \cong \pi_2\left(|\Omega X|, x_0\right)$.
For details about this face group see \cite{L-S-S}.  We give a brief description of it here. 

Consider simplicial maps of the form
$$f \colon \left( I_{m} \times I_{n}, \partial (I_{m} \times I_{n}) \right) \to (X, x_0),$$
for various $m, n$. Here, $I_{m} \times I_{n}$ is the categorical product of intervals considered as simplicial complexes as before: $I_m$ consists of the integers $\{0, \ldots, m\}$ as vertices and pairs of consecutive integers are the edges. So, for example, $I_1 \times I_1$ is the $3$-simplex $\{ (0,0), (1, 0), (0, 1), (1,1) \}$.  Also, $\partial (I_{m} \times I_{n})$ denotes the boundary of the rectangle $I_{m} \times I_{n}$ in the sense that
$$\partial (I_{m} \times I_{n}) = 
\{ 0\} \times I_n \cup \{ m\} \times I_n \cup I_m \times \{0\} \cup I_m \times \{n\}.$$
Then the maps (of pairs of simplicial complexes) that we consider restrict to the constant map at $x_0$ on the subcomplex $\partial (I_{m} \times I_{n})$ of $I_{m} \times I_{n}$, so we have
$$f \left( \partial (I_{m} \times I_{n}) \right) = \{x_0\}.$$
Contiguity equivalence gives an equivalence relation on all such maps  defined on the same-sized rectangle.  Namely, for  
$$f, g \colon \left( I_{m} \times I_{n}, \partial (I_{m} \times I_{n}) \right) \to (X, x_0),$$
we have $f \simeq g$ if there are maps
$$f, f_1, \ldots, f_n, g \colon \left( I_{m} \times I_{n}, \partial (I_{m} \times I_{n}) \right) \to (X, x_0)$$
and a sequence of contiguities
$f \sim f_{1} \sim \cdots \sim f_{n} \sim g$.

Now suppose we have a simplicial map $f \colon \left(I_m \times I_n, \partial (I_m \times I_n) \right) \to (X,x_0)$. For any $r, s \geq 0$, we may view  $I_m \times I_n \subseteq I_{m+r} \times I_{n+s}$ as a sub-complex. We say (simplicial) 
$\bar f\colon \left(I_{m+r} \times I_{n+s}, \partial (I_{m+s} \times I_{n+s}) \right) \to (X,x_0)$ is a \emph{trivial extension of  $f$} when the vertex map of $\bar{f}$ is given by
\[ \bar f(x) = \begin{cases}
f(x) & \text{ if $x\in I_m \times I_n \subseteq I_{m+r} \times I_{n+s}$,} \\
x_0 & \text{ otherwise.}
\end{cases} \]

Given simplicial maps $f\colon  \left(I_m \times I_n, \partial (I_m \times I_n) \right) \to (X,x_0)$ and $g\colon \left(I_{m'} \times I_{n'}, \partial (I_{m'} \times I_{n'}) \right) \to (X,x_0)$, we  say that $f$ and $g$ are \emph{extension-contiguity equivalent}, and write $f \approx g$, when there exist $\bar m \ge \max(m,m')$ and $\bar n \ge \max(n,n')$ and $\bar f, \bar g: I_{\bar m, \bar n} \to X$ with $\bar f$ a trivial extension of $f$ and $\bar g$ a trivial extension of $g$ and $\bar f$ is contiguity equivalent to $\bar g$ by a  contiguity equivalence relative to the boundary.

In Theorem 2.4 of \cite{L-S-S} we show that extension-contiguity equivalence of maps is an equivalence relation on the set of maps $\left(I_m \times I_n, \partial (I_m \times I_n) \right) \to (X,x_0)$ (all shapes and sizes of rectangle).
Then we write $F(X,x_0)$ for the set of equivalence classes of simplicial maps $\left(I_{m} \times I_{n}, \partial (I_{m} \times I_{n}) \right)\to (X,x_0)$, for all  $I_{m} \times I_{n}$, modulo the equivalence relation of extension-contiguity equivalence.

A binary operation in $F(X,x_0)$ is induced by the following operation on maps. Let $f\colon \left(I_{m} \times I_{n}, \partial (I_{m} \times I_{n}) \right) \to (X,x_0)$ and $g\colon \left(I_{r} \times I_{s}, \partial (I_{r} \times I_{s}) \right) \to (X,x_0)$ be simplicial maps.  Define $f\cdot g\colon \left(I_{m+r+1} \times I_{n+s+1}, \partial (I_{m+r+1} \times I_{n+s+1}) \right)\to (X, x_0)$ on vertices by
$$
(f\cdot g)(i,j)=
    \begin{cases}
        f(i,j) & \text{if } (i,j)\in [0,m]_{\Z}\times [0,n]_{\Z}\\
        g(i-(m+1), j-(n+1)) & \text{if } (i,j)\in [m+1,m+r+1]_{\Z}\times [n+1,n+s+1]_{\Z}\\
        x_0 & \text{otherwise}
    \end{cases}
$$
and extend as a simplicial map over each simplex of $I_{m+r+1} \times I_{n+s+1}$.
In Theorem 3.3 of \cite{L-S-S} we show that $F(X,x_0)$ with this operation is a group, the \emph{face group of} $(X, x_0)$.

There is a transparent correspondence between face spheres of size $m \times n$ and loops of length $n$ in $\Omega X[m]$, given by matching the rows of the face sphere with the vertices of the loop. Suppose we have  a face sphere $f \colon \left( I_{m} \times I_{n}, \partial (I_{m} \times I_{n}) \right) \to (X, x_0)$. Then we may define $\gamma_f \colon I_n \to \Omega X[m] \subseteq \Omega X$, a loop of length $n$ in $\Omega X[m]$ based at the vertex $\mathbf{x_0^m}$ of $\Omega X[m]$, by setting
$$\gamma_f := (\mathbf{x_0^m}=l^0, l^1, \ldots, l^{n-1}, l^n = \mathbf{x_0^m}),$$
where $l^j(s) = f(s, j)$ for $j = 0, \ldots, n$. In the other direction, given a loop $\gamma\colon I_n \to \Omega X[m] \subseteq \Omega X$ based at the vertex $\mathbf{x_0^m}$ of $\Omega X[m]$, we define a face sphere 
$f_\gamma \colon \left( I_{m} \times I_{n}, \partial (I_{m} \times I_{n}) \right) \to (X, x_0)$ by setting
$$f_\gamma(i, j) = \gamma(j)(i)$$
for $0 \leq i \leq m$ and $0 \leq j \leq n$.

\begin{lemma}\label{lem: contiguity loops face spheres}
With the notation above, we have
$\gamma \sim \gamma'$
as based loops in $\Omega X[m]$ if, and only if, we have
$$f_\gamma \sim f_{\gamma'}\colon I_m \times I_n \to X$$
as simplicial maps.
\end{lemma}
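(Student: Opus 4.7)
The plan is to prove the equivalence by unpacking both sides to the same combinatorial condition on the $8$-element vertex sets of ``squares'' in the common grid $I_m \times I_n$. Both directions of the bi-implication reduce to one and the same assertion about simplices of $X$, so no cleverness is required; the work is in making the dictionary between the two points of view entirely explicit.

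First I would note, as a preliminary, that $\gamma$ and $\gamma'$ are both based at $\mathbf{x_0^m}$, and $f_\gamma, f_{\gamma'}$ are both constantly $x_0$ on $\partial (I_m \times I_n)$; this matches up the ``relative to the boundary'' aspect of contiguity for face spheres with the based aspect of contiguity for loops in $\Omega X[m]$. Writing $\gamma = (\mathbf{x_0^m}, l^1,\dots, l^{n-1}, \mathbf{x_0^m})$ and similarly $\gamma' = (\mathbf{x_0^m}, l'^{1},\dots, l'^{n-1}, \mathbf{x_0^m})$, with $l^j = (x_0, v^j_1, \dots, v^j_{m-1}, x_0)$ and $l'^j = (x_0, v'^{j}_1, \dots, v'^{j}_{m-1}, x_0)$, the definition $f_\gamma(i,j) = v^j_i$ (and $f_{\gamma'}(i,j) = v'^{j}_i$) sets up the correspondence on vertices.

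Next I would translate contiguity of $\gamma$ and $\gamma'$ in $\Omega X[m]$ using the characterization of simplices of $\Omega X$ from \remref{rem: simplices as arrays}. Since $\gamma \sim \gamma'$ means that for each edge $\{j, j+1\}$ of $I_n$ the set $\{l^j, l^{j+1}, l'^{j}, l'^{j+1}\}$ is a simplex of $\Omega X$, this unpacks (by the ``union of two adjacent columns'' criterion from \remref{rem: simplices as arrays}) to the statement that, for every $0 \le j \le n-1$ and every $0 \le i \le m-1$, the set
$$\{ v^j_i, v^j_{i+1}, v^{j+1}_i, v^{j+1}_{i+1}, v'^{j}_i, v'^{j}_{i+1}, v'^{j+1}_i, v'^{j+1}_{i+1} \}$$
is a simplex of $X$. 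On the other side, contiguity $f_\gamma \sim f_{\gamma'}$ need only be checked on the maximal simplices of $I_m \times I_n$ (per \defref{def: contiguous maps}), and those maximal simplices are precisely the ``unit squares'' $\{(i,j), (i+1,j), (i,j+1), (i+1,j+1)\}$; applying $f_\gamma$ and $f_{\gamma'}$ and taking the union produces exactly the same $8$-element set displayed above. Hence the two contiguity conditions are literally the same condition.

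Both implications then follow immediately from this matching. The only point needing (a little) attention is handling the boundary: at $j=0$ or $j=n$ the rows $l^j, l'^{j}$ are both $\mathbf{x_0^m}$, and at $i=0$ or $i=m$ the corresponding vertex values are all $x_0$, so the $8$-element sets collapse accordingly and the simplex condition is satisfied trivially on the boundary. This is the only ``obstacle'' and it is essentially bookkeeping. No step requires extensions or length changes, because $\gamma, \gamma'$ already have the same length $n$ and all vertices involved live in $\Omega X[m]$ (fixed $m$), which is why the statement is a clean iff without needing to invoke trivial extensions or \lemref{lem: expand then contract}.
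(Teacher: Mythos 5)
Your proposal is correct and follows essentially the same route as the paper: both directions are reduced, via the array/column criterion of \remref{rem: simplices as arrays} for the simplex $\{l^j, l^{j+1}, k^j, k^{j+1}\}$ of $\Omega X$ and the fact that the maximal simplices of $I_m\times I_n$ are the unit squares, to the identical condition that the eight-element set $\{ v^j_i, v^j_{i+1}, v^{j+1}_i, v^{j+1}_{i+1}, v'^{j}_i, v'^{j}_{i+1}, v'^{j+1}_i, v'^{j+1}_{i+1} \}$ is a simplex of $X$ for all $i,j$. Your extra remarks about the boundary collapsing to $x_0$ are harmless bookkeeping that the paper leaves implicit.
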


\begin{proof}
Suppose that 
$$\gamma = ( \mathbf{x_0^m}, l^1, \ldots,  l^{n-1}, \mathbf{x_0^m} ) \quad \text{and} \quad \gamma' = (\mathbf{x_0^m}, k^1, \ldots,  k^{n-1}, \mathbf{x_0^m} ),$$
with each $l^j$ and each $k^j$ a loop of length $m$ in $X$.  For $\gamma \sim \gamma'$, the contiguity condition is that 
$$\{ l^j, l^{j+1}, k^j, k^{j+1} \}$$
is a simplex of $\Omega X$, for each $j = 0, \ldots, n-1$. Writing these four loops array-style and interpreting the simplex condition gives that 
$$\{ l^j(i), l^j(i+1), l^{j+1}(i), l^{j+1}(i+1), k^j(i), k^j(i+1), k^{j+1}(i), k^{j+1}(i+1) \}$$
is a simplex of $X$, for each $i = 0, \ldots, m-1$. and $j = 0, \ldots, n-1$.  This is the condition for $f_\gamma$ and $f_{\gamma'}$ to be contiguous as maps $I_m \times I_n \to X$.
\end{proof}

We will want some basic ways of operating with face spheres.   Recall the notation of \defref{def: extensions}.
Now suppose we have  a face sphere $f \colon \left( I_{m} \times I_{n}, \partial (I_{m} \times I_{n}) \right) \to (X, x_0)$. Then the composition
$$f \circ (\alpha^r_i \times \alpha^s_j) \colon \left( I_{m+r} \times I_{n+s}, \partial (I_{m+r} \times I_{n+s}) \right) \to (X, x_0)$$
is the face sphere of size $(m+r)\times (n+s)$ obtained from $f$---when viewed ``array-style" as an $(m+1)\times (n+1)$ array of values in $X$---by repeating the $i$th column of values $r$ times and the $j$th row of values $s$ times.
In particular, the compositions $f \circ (\alpha^r_m \times \alpha^s_n)$ give typical trivial extensions of $f$. 

In the following result, we are mainly interested in the case in which the map is a face sphere, but there is no need to restrict to that case for the result. 

\begin{lemma}
 Let  $f \colon I_m \times I_n \to X$ be a simplicial map. For given $r, s \geq 0$, we have a contiguity equivalence
 $$f \circ (\alpha^r_i \times \alpha^s_j) \simeq f \circ (\alpha^r_k \times \alpha^s_l)\colon I_{m+r} \times I_{n+s} \to X$$
 for each $0 \leq i, k \leq m$ and each $0 \leq j, l \leq n$. 
\end{lemma}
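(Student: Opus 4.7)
The plan is to leverage Proposition~\ref{prop: contiguity results} by first promoting parts (c)--(d) to the level of products of intervals, then post-composing with $f$. I would proceed in three steps.

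First, apply Proposition~\ref{prop: contiguity results}(d) with $l = \mathrm{id}_{I_m}\colon I_m\to I_m$ (viewed as a path in $I_m$) to obtain the contiguity equivalence $\alpha^r_i \simeq \alpha^r_k \colon I_{m+r}\to I_m$, and similarly $\alpha^s_j \simeq \alpha^s_l \colon I_{n+s}\to I_n$. (Equivalently, iterate part (c) using part (b) to get the same conclusions.)

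Second, I would establish an auxiliary product-preservation statement: if $\phi\sim\phi'\colon A\to A'$ and $\psi\sim\psi'\colon B\to B'$ are contiguous simplicial maps, then $\phi\times\psi \sim \phi'\times\psi' \colon A\times B \to A'\times B'$. Here I use the categorical product description in which a set of pairs is a simplex of $A\times B$ exactly when its two coordinate projections are simplices of $A$ and $B$. For a simplex $\sigma$ of $A\times B$, the projection of $(\phi\times\psi)(\sigma)\cup(\phi'\times\psi')(\sigma)$ to $V(A')$ is $\phi(\pi_A\sigma)\cup\phi'(\pi_A\sigma)$, a simplex of $A'$ by the contiguity $\phi\sim\phi'$; the same argument in the second coordinate yields a simplex of $B'$, and hence the union is a simplex of $A'\times B'$. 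Passing to contiguity equivalence via the two-step interpolation $\phi\times\psi \simeq \phi'\times\psi \simeq \phi'\times\psi'$ upgrades this to: $\phi\simeq\phi'$ and $\psi\simeq\psi'$ imply $\phi\times\psi \simeq \phi'\times\psi'$. Applied to step one, this delivers
$$\alpha^r_i \times \alpha^s_j \simeq \alpha^r_k \times \alpha^s_l \colon I_{m+r}\times I_{n+s} \to I_m\times I_n.$$

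Third, invoke Proposition~\ref{prop: contiguity results}(b) with $f\simeq f$ (trivially) to conclude
$$f\circ(\alpha^r_i\times\alpha^s_j) \simeq f\circ(\alpha^r_k\times\alpha^s_l),$$
which is the desired contiguity equivalence.

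The only nontrivial piece is the product-preservation verification in step two, and this amounts to a short projection check using the simplex description of $A\times B$; the real content of the lemma is already packaged in Proposition~\ref{prop: contiguity results}.
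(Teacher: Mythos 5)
Your proof is correct, but it is organized differently from the paper's. The paper reduces to single column/row repeats ($f\circ(\alpha^1_i\times\mathrm{id}) \sim f\circ(\alpha^1_{i+1}\times\mathrm{id})$, and the transposed statement for rows) and then verifies the contiguity condition by an explicit case analysis on the typical maximal $3$-simplex $\{(s,t),(s+1,t),(s,t+1),(s+1,t+1)\}$ of $I_{m+1}\times I_n$, distinguishing $s=i$, $s=i+1$, and the remaining values of $s$. You instead isolate a general auxiliary fact --- contiguity of simplicial maps is preserved by categorical products --- and prove it by the projection characterization of simplices in $A\times B$; combined with \propref{prop: contiguity results}(b) and (c) this makes the lemma a formal consequence of the one-variable statements already established. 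Your projection check is valid (the image of a simplex of $A\times B$ under $\pi_A$ is a simplex of $A$, so $\phi(\pi_A\sigma)\cup\phi'(\pi_A\sigma)$ is a simplex of $A'$ by contiguity, and likewise in the second factor), and the two-step interpolation $\phi\times\psi\simeq\phi'\times\psi\simeq\phi'\times\psi'$ correctly upgrades single contiguities to contiguity equivalences. What your route buys is modularity and generality: the product-preservation lemma would be reusable elsewhere (e.g., for other manipulations of face spheres), and it avoids any hands-on bookkeeping with the simplices of $I_{m+1}\times I_n$. What the paper's route buys is self-containedness in the combinatorial style of the rest of the section, at the cost of the explicit case analysis and the separate transposition argument for rows, which your approach renders unnecessary.
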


\begin{proof}
The assertion is that repeating different columns the same number of times and/or repeating different rows the same number of times leads to contiguity equivalent maps. Since we may obtain a composition $f \circ (\alpha^r_i \times \alpha^s_j)$ by successively repeating column $i$ and successively repeating row $j$ (in either order), it is sufficient to show that we have contiguities 
 $$f \circ (\alpha^1_i \times \mathrm{id}) \sim f \circ (\alpha^1_{i+1} \times \mathrm{id})\colon I_{m+1} \times I_{n} \to X$$
and
 $$f \circ ( \mathrm{id}\times\alpha^1_j ) \sim f \circ ( \mathrm{id}\times \alpha^1_{j+1} )\colon I_{m} \times I_{n+1} \to X$$
for each $0 \leq i \leq m$ and each $0\leq j\leq n$.

For the first of these, which simply asserts that repeating the $i$th column or repeating the $(i+1)$st column gives contiguous maps, we check the contiguity condition directly. Let $\sigma = \{ (s, t), (s+1, t), (s, t+1), (s+1, t+1) \}$ be the typical $3$-simplex of $I_{m+1} \times I_{n}$. Unless $s = i$ or $s=i+1$, both $f \circ (\alpha^1_i \times \mathrm{id})$ and $f \circ (\alpha^1_{i+1} \times \mathrm{id})$ agree on each vertex of $\sigma$: the contiguity condition is trivially satisfied. If $s=i$, then we have
$$
\begin{aligned}
    f \circ (\alpha^1_i \times \mathrm{id})(\sigma) \cup f \circ (\alpha^1_{i+1} \times \mathrm{id})(\sigma) &= 
\{ f(s, t), f(s, t+1), f(s+1, t), f(s+1, t+1)\}\\
&\ \ \cup \{ f(s, t), f(s, t+1), f(s+1, t), f(s+1, t+1)\}\\
&= \{ f(s, t), f(s, t+1), f(s+1, t), f(s+1, t+1)\}\\
&= f(\sigma),
\end{aligned}$$
which is a simplex of $X$.  Likewise, we check that 
$$
    f \circ (\alpha^1_i \times \mathrm{id})(\sigma) \cup f \circ (\alpha^1_{i+1} \times \mathrm{id})(\sigma) = 
f(\sigma).
$$
The contiguity $f \circ (\alpha^1_i \times \mathrm{id}) \sim f \circ (\alpha^1_{i+1} \times \mathrm{id})$ follows.  The contiguity of maps with repeated rows follows from the same argument, since we may transpose the arrays of values that represent the maps and write:
$$\begin{aligned}
 \left(f \circ ( \mathrm{id}\times\alpha^1_j )\right)^T &=  f^T \circ (\alpha^1_j \times \mathrm{id})  \\
 &\sim f^T \circ (\alpha^1_{j+1} \times \mathrm{id})\\
 &=\left(f \circ ( \mathrm{id}\times\alpha^1_{j+1} )\right)^T.
\end{aligned}$$
Now the simplices of $I_m \times I_{n+1}$ and $I_{n+1} \times I_m$ correspond to each other under this transposition, and it is clear that, in this context, maps will be contiguous exactly when their transposes are contiguous.  
\end{proof}

\section{Identifying Face Group of $X$ with Edge Group of $\Omega X$}\label{sec: F(X) iso E(Omega X)}

The correspondence between face spheres in $X$ and edge loops in $\Omega X$ observed above \lemref{lem: contiguity loops face spheres} strongly suggests the familiar adjunction in the topological setting
$$\mathrm{map}\left(I, \mathrm{map}(I, X) \right) \equiv \mathrm{map}(I \times I, X),$$
which, when suitably restricted, leads to the isomorphism $\pi_2(X) \cong \pi_1(\Omega X)$.  We will establish this isomorphism in our combinatorial setting.

In \lemref{lem: contiguity loops face spheres}, we passed from a face sphere in $X$ to a sequence of edge loops in $\Omega X[m]$.   If we are to pass from face spheres in $X$ to edge loops in $\Omega X$, we need to connect this sequence of vertices in $\Omega X[m]$ to the basepoint proper $\mathbf{x_0} \in \Omega X$, in the style of \propref{prop: loop in X(M)}. So, define a function
$$\Phi \colon \left\{ \text{face spheres in $X$ of size } m \times n \right\} \to  \left\{ \text{edge loops in $\Omega X$ of length } 2m + n \right\}$$
by setting (in the notation from above \lemref{lem: contiguity loops face spheres})   
$$\Phi(f)     = ( \mathbf{x_0}, \mathbf{x^1_0}, \ldots, \mathbf{x^m_0}, l^1, \ldots, l^{n-1}, \mathbf{x^m_0}, \ldots, \mathbf{x^1_0}, \mathbf{x_0} ),$$
where $\gamma_f     = ( \mathbf{x^m_0}, l^1, \ldots, l^{n-1}, \mathbf{x^m_0} )$. In the notation of \secref{sec: loops in Omega X}, we may write $\Phi(\gamma)$ as the concatenation of paths in $\Omega X$
$$\Phi(\gamma) = \gamma_0^{m} \cdot \gamma \cdot \widetilde{\gamma_0^{m}}.$$

Note that the edge loop $\Phi(f)$ in $\Omega X$ is based at $\mathbf{x_0}$ and is an edge loop in the component $(\Omega X)_0$.   

In the following result, we assume $X$ is a connected simplicial complex and the edge group $E(\Omega X, \mathbf{x_0})$ is the edge group of the connected component of $\mathbf{x_0}$.  Per \thmref{thm: homogeneous components}, this is isomorphic to the edge group of any other connected component of $\Omega X$.  

\begin{theorem}\label{thm: edge Omega X and face X}
Let $(X, x_0)$ be a (based) simplicial complex.  The function $\Phi$ from above induces an isomorphism of groups
$$\xymatrix{
F(X, x_0) \ar[r]^-{\cong}& E(\Omega X, \mathbf{x_0}).
}$$
\end{theorem}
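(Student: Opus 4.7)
The plan is to show that $\Phi$ (a) is well-defined on extension-contiguity classes of face spheres, (b) is a group homomorphism, (c) is surjective, and (d) is injective. The essential bridge in all four steps is \lemref{lem: contiguity loops face spheres}: at a fixed common height $m$, a contiguity of face spheres of size $m\times n$ matches exactly a contiguity of the corresponding loops in $\Omega X[m]$.

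For (a), I would decompose a typical extension-contiguity equivalence $f\approx f'$ into its elementary moves --- contiguity at fixed size, trivial extension by a row of $x_0$'s, and trivial extension by a column of $x_0$'s --- and check each translates into an extension-contiguity move of $\Phi(f)$ inside $\Omega X$. Contiguities are immediate from \lemref{lem: contiguity loops face spheres} since the shared $\gamma_0^m$ and $\widetilde{\gamma_0^m}$ sandwiching $\gamma_f$ are untouched. A column of $x_0$'s simply appends a repeat of $\mathbf{x_0^m}$ inside $\gamma_f$, an extension move in $\Omega X$. A row of $x_0$'s raises every column-vertex from length $m$ to length $m+1$; this is mediated in $\Omega X$ by the trivial-extension edges $\{l, \overline{l}\}$ together with parts (a)--(b) of \lemref{lem: 3-simplex extensions}, while the prefix/suffix path lengthens from $\gamma_0^m$ to $\gamma_0^{m+1}$ as required. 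For (b), I would match the array-style display of $f\cdot g$ column-by-column to the concatenated loop $\Phi(f)\cdot\Phi(g)$; the only discrepancy is the $x_0$-padded strip separating the diagonal placements of $f$ and $g$ inside $f\cdot g$, and this collapses down to the single $\mathbf{x_0}$-repeat of the concatenation formula using the same \lemref{lem: 3-simplex extensions} moves.

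For (c), \propref{prop: loop in X(M)} reduces any class $[\alpha]\in E(\Omega X,\mathbf{x_0})$ to a representative of the form $\gamma_0^M\cdot\gamma\cdot\widetilde{\gamma_0^M}$ with $\gamma$ a loop in $\Omega X[M]$ based at $\mathbf{x_0^M}$, and the face sphere $f_\gamma$ defined just above \lemref{lem: contiguity loops face spheres} satisfies $\Phi(f_\gamma)=\alpha$ by construction.

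The hardest step is (d), injectivity. Suppose $\Phi(f)\approx\Phi(f')$. By \lemref{lem: expand then contract} applied in $\Omega X$, the equivalence may be rearranged into trivial extensions of $\Phi(f)$ and $\Phi(f')$ to a common length, followed by a contiguity equivalence of loops of that common length whose vertices are themselves loops in $X$ of possibly varying lengths. Let $M$ be the maximum length attained among these vertices. Applying the rectification technique of \lemref{lem: contiguity of extensions} row-by-row through the contiguity equivalence, I can replace every vertex in the equivalence by its length-$M$ trivial extension while preserving adjacent-row contiguities, the net effect being a contiguity equivalence entirely inside $\Omega X[M]$. \lemref{lem: contiguity loops face spheres} then converts this into a contiguity equivalence of face spheres of common height $M$, linking trivial extensions $\bar f$ and $\bar{f'}$ of $f$ and $f'$, which is precisely an extension-contiguity equivalence $f\approx f'$ in $F(X)$. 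The principal technical obstacle is this promotion to uniform height $M$: a general extension-contiguity in $\Omega X$ involves loops of many different lengths, and one must verify that the rectification carries every individual contiguity of \defref{def: contiguous maps} across to a contiguity of the length-$M$ extensions. Once this is pinned down, the earlier lemmas have been arranged precisely so that the isomorphism falls out.
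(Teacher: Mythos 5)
Your proposal is correct and follows essentially the same route as the paper's proof: well-definedness via the elementary moves and Lemma~\ref{lem: 3-simplex extensions}, the homomorphism property by collapsing the $x_0$-padding between the diagonal blocks of $f\cdot g$, surjectivity from Proposition~\ref{prop: loop in X(M)}, and injectivity by rearranging the equivalence via Lemma~\ref{lem: expand then contract}, same-sizing to the maximal length $M$ with Lemma~\ref{lem: contiguity of extensions}(b), and translating back through Lemma~\ref{lem: contiguity loops face spheres}. The only cosmetic differences are that your row/column labels are swapped relative to the paper's convention (both extension cases are still covered) and that you argue injectivity directly rather than via triviality of the kernel.
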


\begin{proof}
First, we show that $\Phi$ induces a well-defined function 
\begin{equation}\label{eq: face group to edge group map}
    \Phi \colon F(X, x_0) \to E(\Omega X, \mathbf{x_0}).
\end{equation}
To this end, suppose a 
map $f \colon \left( I_{m} \times I_{n}, \partial (I_{m} \times I_{n}) \right) \to (X, x_0)$ represents $[f] \in F(X, x_0)$.    It is sufficient to show that, for contiguous maps $f \sim g$, and a trivial extension $\overline{f}$ of $f$, we have $\Phi(g)$ and $\Phi(\overline{f})$ each in the same extension-contiguity class as $\Phi(f)$ when considered as loops in $\Omega X$.

As above, we may view $f$ in terms of its sequence of rows
$\gamma_f  = ( \mathbf{x^m_0}, l^1, \ldots, l^{n-1}, \mathbf{x^m_0} )$, giving a loop of length $n$ in $\Omega X[m]$. Each row of $f$ gives a loop in $X$: we may write $l^j = (x_0, v^j_1, \ldots, v^j_{m-1}, x_0)$. Finally, viewing the loop $\Phi(f)$ as a map
$$\Phi(f) \colon I_{2m+n} \to \Omega X,$$
we have
$$\Phi(f)(t) = \begin{cases}
    \mathbf{x^{t+1}_0} & 0 \leq t \leq m-1 \\
    \mathbf{x^m_0} & t = m\\
    l^{t-m} & m+1 \leq t \leq m+n-1\\
    \mathbf{x^m_0} & t = m+n\\
    \mathbf{x^{(2m+n+1)-t}_0} & m+n+1 \leq t \leq 2m+n.
\end{cases}$$
For a contiguous $g \sim f$, we may write likewise $\gamma_g  = ( \mathbf{x^m_0}, k^1, \ldots, k^{n-1}, \mathbf{x^m_0} )$, with each row $k^j = x_0, w^j_1, \ldots, w^j_{m-1}, x_0$ and
$$\Phi(g)(t) = \begin{cases}
    \mathbf{x^{t+1}_0} & 0 \leq t \leq m-1 \\
    \mathbf{x^m_0} & t = m\\
    k^{t-m} & m+1 \leq t \leq m+n-1\\
    \mathbf{x^m_0} & t = m+n\\
    \mathbf{x^{(2m+n+1)-t}_0} & m+n+1 \leq t \leq 2m+n.
\end{cases}$$
To confirm that these give contiguous loops in $\Omega X$, we need to check that $\Phi(f)(\sigma) \cup \Phi(g)(\sigma)$ is a simplex of $\Omega X$ for each simplex $\sigma$ of $I_{2m+n+1}$.  That is, we want
$$\{ \Phi(f)(t), \Phi(f)(t+1), \Phi(g)(t), \Phi(g)(t+1) \}$$
to be a simplex of $\Omega X$, for each $0 \leq t \leq 2m+n$.  This is more-or-less tautologically satisfied for $0 \leq t \leq m-1$ and $m+n \leq t \leq 2m+n-1$, since in these ranges we have 

$\{ \Phi(f)(t), \Phi(f)(t+1), \Phi(g)(t), \Phi(g)(t+1) \} = $
$$ 
\begin{cases}
    \{ \mathbf{x^{t+1}_0}, \mathbf{x^{t+2}_0} \} & 0 \leq t \leq m-2 \\
    \mathbf{x^m_0} & t = m-1, m+n\\
   \{ \mathbf{x^{(2m+n+1)-t}_0}, \mathbf{x^{(2m+n)-t}_0} \}
     & m+n+1 \leq t \leq 2m+n-1.
\end{cases}
$$
Thus, we have $\Phi(f)(\sigma) \cup \Phi(g)(\sigma)$ is either an edge or a single vertex of $\Omega X$ here. For the sections of $\Phi(f)$ and $\Phi(g)$ that actually involve $f$ and $g$, we write $l^0 = \mathbf{x^m_0} = l^n$ and 
$k^0 = \mathbf{x^m_0} = k^n$, so that we have
$$\{ \Phi(f)(t), \Phi(f)(t+1), \Phi(g)(t), \Phi(g)(t+1) \} = \{ l^t, l^{t+1}, k^t, k^{t+1} \}$$
for $t = m, \dots, m+n-1$.

To check the simplex condition, write the four loops ``array style" as
\[
\left[ \begin{array}{c}
k^{t+1} \\
k^{t}  \\
l^{t+1}  \\
l^{t}  
\end{array}\right]
\qquad = \qquad
\left[\begin{array}{ccccc}
x_0  & w^{t+1}_1 &  \cdots &  w^{t+1}_{n-1} & x_0 \\
x_0  & w^{t}_1 &  \cdots &  w^{t}_{n-1} & x_0 
 \\
x_0  & v^{t+1}_1 &  \cdots &  v^{t+1}_{n-1} & x_0 \\
x_0  & v^{t}_1 &  \cdots &  v^{t}_{n-1} & x_0 
\end{array}\right].
\]  
The simplex condition we want satisfied is that the union of vertices from adjacent columns should form a simplex of $X$. But this condition is a direct translation of what the contiguity $f \sim g$ entails: we have $f(\sigma) \cup g(\sigma)$ a simplex of $X$ for the simplex
$$\sigma = \{ (i, t), (i+1, t), (i. t+1), (i+1, t+1) \}$$
of $I_m \times I_n$, and our notation means $f(i, j) = v^j_i$ and $g(i, j) = w^j_i$. It follows that $\Phi(f) \sim \Phi(g)$ as loops in $\Omega X$, for contiguous maps $f \sim g$.  

Next consider a trivial extension of $f$. It is sufficient to consider adding a single column to $f$ and adding a single row to $f$ separately, as these may be repeated in various combinations to obtain a general trivial extension of $f$. If we extend $f$ to $\overline{f}$ by adding a row, so that row-wise we may write
$$\gamma_{\overline{f}}  = ( \mathbf{x^m_0}, l^1, \ldots, l^{n-1}, \mathbf{x^m_0}, \mathbf{x^m_0} ),$$
then the effect on $\Phi(f)$ is to repeat the vertex $\mathbf{x^m_0}$, thus:
$$\Phi(\overline{f}) = S_0^{m} \cdot \gamma_f \cdot \mathbf{x^m_0} \cdot \widetilde{S_0^{m}}.$$
Repeating a vertex is one of the moves we have for operating within an extension contiguity class of the edge group, and so $\Phi(f)$ and $\Phi(\overline{f})$ will represent the same class in the edge group of $\Omega X$. On the other hand suppose we extend $f$ by adding a column.  Then we have  
$$\Phi(\overline{f}) = \gamma_0^{m+1} \cdot ( \mathbf{x^{m+1}_0}, \overline{l^1}, \ldots, \overline{l^{n-1}}, \mathbf{x^{{m+1}}_0} ) \cdot \widetilde{\gamma_0^{m+1}}.$$
Now parts (a) and (b) of \lemref{lem: 3-simplex extensions} give a contiguity of paths

$ (\mathbf{x^{m}_0}, \mathbf{x^{m+1}_0}, \mathbf{x^{m+1}_0}, \overline{l^1}, \ldots, \overline{l^{n-1}}, \mathbf{x^{{m+1}}_0}, \mathbf{x^{m+1}_0}, \mathbf{x^{m}_0} ) \sim $ 
$$( \mathbf{x^{m}_0}, \mathbf{x^{m}_0}, \mathbf{x^{m}_0}, l^1, \ldots, l^{n-1}, \mathbf{x^{{m}}_0}, \mathbf{x^{m}_0}, \mathbf{x^{m}_0} )$$
that extends to a contiguity of loops
$$\Phi(\overline{f}) \sim \gamma_0^{m} \cdot \mathbf{x^{m}_0} \cdot ( \mathbf{x^{m}_0}, l^1, \ldots, l^{n-1}, \mathbf{x^{{m}}_0} ) \cdot \mathbf{x^{m}_0} \cdot \widetilde{\gamma_0^{m}}.$$
Then removing the repeats of the vertex $\mathbf{x^{m}_0}$ shows that $\Phi(\overline{f})$ and $\Phi(f)$ will represent the same extension-contiguity class in the edge group of $\Omega X$ in this case too. This completes the argument that $\Phi$ induces a well-defined map of extension-contiguity classes as in (\ref{eq: face group to edge group map}). We abuse notation somewhat by setting $\Phi([f]) := [\Phi(f)]$ for a face sphere $f$ in $X$.  

The next step is to show that $\Phi$ induces a homomorphism.  Suppose that $[f]$ and $[g]$ are elements of $F(X, x_0)$ with 
$$f\colon \left(I_{m} \times I_{n}, \partial (I_{m} \times I_{n}) \right) \to (X,x_0)$$
and
$$g\colon \left(I_{r} \times I_{s}, \partial (I_{r} \times I_{s}) \right) \to (X,x_0).$$
Then we must show that $\Phi([f]\cdot [g])$ and $\Phi([f])\cdot \Phi([g])$ give the same element of $E(\Omega X, \mathbf{x_0})$.

Since $\Phi$ is well-defined on a contiguity-equivalence class, we may pre-process $[f]\cdot [g]$ as follows.  The map $f \cdot g$, when restricted to $[n+1, n+s+1] \times I_m$, agrees with $g \circ (\alpha^m_0\times \mathrm{id})$, which is contiguity equivalent to $g \circ (\alpha^m_r\times \mathrm{id})$ (as face spheres in $X$).  Since this contiguity equivalence is stationary on its bottom row, we may piece it together with the stationary contiguity on $I_{m+r+1} \times I_{n}$ to obtain a contiguity equivalence (again, of face spheres in $X$) $f \cdot g \simeq f*g,$ where $f*g$ denotes the map 
$$f*g \colon \left(I_{m+r+1} \times I_{n+s+1}, \partial (I_{m+r+1} \times I_{n+s+1}) \right) \to (X,x_0)$$
defined by
$$
f*g(i, j) := 
\begin{cases}
    f\circ(\alpha^{r+1}_m \times \mathrm{id}) & 0 \leq j \leq n\\
    g\circ(\alpha^{m+1}_r \times \mathrm{id}) & n+1 \leq j \leq n+r+1
\end{cases}
$$
Then $[f]\cdot [g]  = [f\cdot g] = [f*g]$, and we have 
$$\Phi([f]\cdot [g]) = [\Phi(f*g)].$$
Recall our notation from above, whereby we write $f$ and $g$ ``row-wise" as the loops $\gamma_f  = ( \mathbf{x^m_0}, l^1, \ldots, l^{n-1}, \mathbf{x^m_0} )$ and 
$\gamma_g  = ( \mathbf{x^r_0}, k^1, \ldots, k^{s-1}, \mathbf{x^r_0} )$ in $\Omega X$.  Then we have
$$\Phi(f*g) = \gamma_0^{m+r+1} \cdot \gamma_f\circ \alpha^{r+1}_m \cdot \gamma_g\circ \alpha^{m+1}_r \cdot \widetilde{\gamma_0^{m+r+1}},$$
where by $\gamma_f\circ \alpha^{r+1}_m$ we intend the loop in $\Omega X[m+r+1]$ 
$$( \mathbf{x^{m+r+1}_0}, l^1\circ \alpha^{r+1}_m, \ldots, l^{n-1}\circ \alpha^{r+1}_m, \mathbf{x^{m+r+1}_0} )$$
in $\Omega X$ and likewise for $\gamma_g\circ \alpha^{m+1}_r$.

On the other hand, we have
$$
\begin{aligned}
\Phi\left( [f]\right) \cdot \Phi\left( [g]\right) &= \Phi\left( [f\circ(\alpha^{r+1}_m \times \mathrm{id})]\right) \cdot \Phi\left( [g\circ(\alpha^{m+1}_r \times \mathrm{id})]\right)\\
&= \gamma_0^{m+r+1} \cdot \gamma_f\circ \alpha^{r+1}_m \cdot \widetilde{\gamma_0^{m+r+1}} \cdot \gamma_0^{m+r+1} \cdot\gamma_g\circ \alpha^{m+1}_r \cdot \widetilde{\gamma_0^{m+r+1}}.
\end{aligned}
$$
Now the middle section here is a concatenation of a path and its reverse, which we may replace up to a contiguity equivalence (relative its endoints) by the constant path at $\mathbf{x_0^{m+r+1}}$ of suitable length.  Then removing repeats of $\mathbf{x_0^{m+r+1}}$ from the central section results in a loop in $\Omega X$ that is a extension-contiguity equivalent to $\Phi(f*g)$ as displayed above.  It follows that we have $\Phi([f] \cdot [g]) = \Phi([f]) \cdot \Phi([g])$ in $E(\Omega X, \mathbf{x_0})$, and so $\Phi$ is a homomorphism.   

Surjectivity of $\Phi$ follows directly from \propref{prop: loop in X(M)}.  There, we showed that a typical element of $E(\Omega X, \mathbf{x_0})$ may be represented by an edge loop of the form
$$\widehat{\gamma}     = ( \mathbf{x_0}, \mathbf{x^1_0}, \ldots, \mathbf{x^M_0}, \ell^1, \ldots, \ell^p, \mathbf{x^M_0}, \ldots, \mathbf{x^1_0}, \mathbf{x_0} ),$$
with each vertex $\ell^j$ a loop of length $M$ in $\Omega X$. Up to extension contiguity of edge loops, we may repeat the vertex $\mathbf{x^M_0}$ at either end of the central section to obtain a representative
$$\gamma_0^M \cdot (  \mathbf{x^M_0}, \ell^1, \ldots, \ell^p, \mathbf{x^M_0} ) \cdot \widetilde{ \gamma_0^M}.$$
Then $( \mathbf{x^M_0}, \ell^1, \ldots, \ell^p, \mathbf{x^M_0} )$ may be viewed as $\gamma_f$ for a face sphere $f \colon I_M \times I_n \to X$, for which we have 
$$\Phi(f) = \gamma_0^M \cdot (  \mathbf{x^M_0}, \ell^1, \ldots, \ell^p, \mathbf{x^M_0} ) \cdot \widetilde{ \gamma_0^M}.$$
Thus $\Phi$ induces a surjection.

To show that the homomorphism induced by $\Phi$ is an injection, it is sufficient to show that a face sphere $f$ is extension-contiguity equivalent to the constant map at the basepoint, whenever the loop $\Phi(f)$ in $\Omega X$ is extension-contiguity equivalent to the constant loop in $\Omega X$.   

Suppose we have 
$$f\colon \left(I_{m} \times I_{n}, \partial (I_{m} \times I_{n}) \right) \to (X,x_0)$$
Then $\Phi(f)$ is a loop of length $2m+n+2$ in $\Omega X$.  By \lemref{lem: expand then contract}, we may assume that some trivial extension $\gamma$ of $\Phi(f)$ up to a suitable length $N$ is contiguity equivalent to the constant loop $\mathbf{x^N_0}$, i.e., we have
$$\gamma:= \Phi(f)\circ \alpha_{2m+n+2}^r \simeq \mathbf{x^N_0},$$
for $r = N-(2m+n+2)$. Suppose this contiguity equivalence is written out as
$$\gamma = \gamma_0  \sim \gamma_1 \sim \cdots \sim \gamma_r = \mathbf{x^N_0},$$
with each $\gamma_j$ a loop of length $N$ in $\Omega X$.  Let $M$ be the longest length of a loop in $X$ that occurs as a vertex in any of the loops $\gamma_j$, for $j=0, \ldots, r$. Then we ``same-size" to length $M$ every vertex of every loop $\gamma_j$ that occurs in the contiguity equivalence.
By part (b) of \lemref{lem: contiguity of extensions}, we have a contiguity equivalence of loops
$$[\gamma_0]_M  \sim [\gamma_1]_M \sim \cdots \sim [\gamma_r]_M,$$
each of which is a loop of length $N$  in $\Omega X[M]$.

If we use the notation of \lemref{lem: contiguity loops face spheres} to write 
$$\gamma_f = (\mathbf{x_0^m}, l^1, \ldots, \l^{n-1}, \mathbf{x_0^m}),$$
then the loop $[\gamma_0]_M$ is obtained from $[\gamma_f]_M$ by adding repeats of $\mathbf{x_0^M}$ at either end which, by part (d) of \propref{prop: contiguity results} 
 is contiguity equivalent to adding the suitable number of repeats of $\mathbf{x_0^M}$ at the end.  That is, we may extend the contiguity equivalence above to one of 
$$[\gamma_f]_M\circ \alpha_p^q \simeq [\gamma_0]_M  \simeq [\gamma_r]_M,$$
where $[\gamma_r]_M$ just consists of repeats of the constant loop at $x_0$ of length $M$.
Now \lemref{lem: contiguity loops face spheres} allows us to translate this contiguity equivalence into one of face spheres
$$f\circ (\alpha_m^q \times \alpha_n^p)  \simeq f_{ [\gamma_r]_M }.$$
As $[\gamma_r]_M$ just consists of repeats of the constant loop at $x_0$ of length $M$, the corresponding face sphere  $f_{ [\gamma_r]_M }$ is just the constant map at $x_0$ of $I_M \times I_N$.  Thus, we have an extension-contiguity equivalence of face spheres
between $f$ and a constant map.
\end{proof}

\begin{example}
In \cite{L-S-S} it is shown that we have $F(X, x_0) \cong \pi_2(|X|, x_0)$. Combining this with \thmref{thm: edge Omega X and face X}, we have isomorphisms
$$E(\Omega X, x_0) \cong F(X, x_0) \cong \pi_2( |X|, x_0).$$
For instance, let $X$ be the hollow $3$-simplex of \exref{ex: hollow 3-simplex}. Then the spatial realization $|X|$ is homotopy equivalent to $S^2$ and we have isomorphisms
$$E(\Omega X, x_0) \cong \pi_2( S^2, x_0) \cong \Z.$$
Because topologically we have $\pi_2( |X|, x_0) \cong \pi_1( \Omega |X|, x_0)$, it follows that for any $X$ we have an isomorphism
$$E(\Omega X, x_0) \cong \pi_1( \Omega |X|, x_0).$$
Then, because the edge group is isomorphic to the fundamental group of the spatial realization, our results so far yield an isomorphism of fundamental groups 
$$\pi_1( |\Omega X|, x_0) \cong \pi_1( \Omega |X|, x_0).$$
In the rest of the paper we improve on this result by showing a homotopy equivalence between  $|\Omega X|$ and $\Omega |X|$.
\end{example}

\section{Spatial Realization of $\Omega X$}\label{sec: spatial realization}

We hope eventually to establish a homotopy equivalence $|\Omega X| \simeq \Omega |X|$, where this latter denotes the topological based loop space of the (topological space) spatial realization $|X|$ of the simplicial complex $X$.  For the time being, we relate $|\Omega X|$ to Stone's approximations to the loop space from \cite{St79} (see section 4, in particular). 

For $X$ a simplicial complex, Stone gives a cell complex $N(k)$ for each $k$.  These $N(k)$ form a direct system by inclusion and Stone shows that the colimit is homotopy equivalent to the (topological) based loop space $\Omega |X|$.  Stone's point of view is that of Morse theory for a polyhedron (e.g. a triangulated manifold).     

\begin{definition}[Stone's $N(k)$]\label{def:stone}
Let $X$ be a simplicial complex. By a \emph{chain of length $k$} (of simplexes in $X$), we mean a sequence of simplices (of any dimensions) 
$$ \{ x_0 \}, \sigma_1, \ldots, \sigma_{k-1}, \{x_0\}$$
that satisfy the condition $\sigma_i \cup \sigma_{i+1}$ is a simplex of $X$, for $0 \leq i\leq k-1$.  Here, we interpret $\sigma_0$ and $\sigma_k$ to be the vertex $\{ x_0 \}$.  Then $N(k)$ is defined as a subset of points of the $(k-1)$-fold Cartesian product $|X| \times \cdots \times  |X|$ as
$$N(k) = \left\{ (x_1, \ldots, x_{k-1}) \mid x_i \in |\sigma_i| \text{ for }  \{ x_0 \}, \sigma_1, \ldots, \sigma_{k-1}, \{x_0\} \text{ a chain in $X$} \right\}$$
\end{definition}  

\begin{remark}\label{rmk:N-k-functorial}
The construction $N(k)$ is functorial in $X$.
Let $f : (X, x_{0}) \rightarrow (Y, y_{0})$ be a simplicial map.
For the sake of this discussion, denote by $N_{X}(k)$ the $k$th approximation for $X$, and similarly for $Y$.
Let $(\sigma_{1}, \ldots, \sigma_{k-1})$ be a chain of length $k$ in $X$, so that each $\sigma_{i}$ is a simplex
in $X$, and the union of each pair of adjacent simplices is also a simplex, as are $\{x_{0}\} \cup \sigma_{1}$ and $\{x_{0}\} \cup \sigma_{k-1}$.
Since $f$ is simplicial and direct images commute with unions, $(f(\sigma_{1}), \ldots, f(\sigma_{k-1}))$ is a chain of length $k$ in $Y$.
Therefore the iterated function, $|f|^{k-1} : | X |^{k-1} \rightarrow |Y|^{k-1}$, sends chains to chains, and so restricts and co-restricts
to a continuous function $N_{f}(k) : N_{X}(k) \rightarrow N_{Y}(k)$.

Clearly, this construction respects identities and composition.
\end{remark}

\begin{definition}[Our $\Omega X(k)$]
Let $X$ be a simplicial complex. By $\Omega X(k)$ we mean the subcomplex of $\Omega X$ using vertices that are edge loops of length up to $k$.
\end{definition}  


\begin{proposition}\label{prop: homotopy equivalent skeleta}
For each $k$, $N(k)$ is a deformation retract of $|\Omega X(k)|$.
\end{proposition}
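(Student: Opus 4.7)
The plan is to construct continuous maps $r \colon |\Omega X(k)| \to N(k)$ and $\iota \colon N(k) \to |\Omega X(k)|$ satisfying $r \circ \iota = \mathrm{id}_{N(k)}$, together with a deformation of the identity of $|\Omega X(k)|$ onto $\iota \circ r$ that is stationary on $\iota(N(k))$; this exhibits $\iota(N(k))$, canonically identified with $N(k)$, as a strong deformation retract. The retraction $r$ is defined simplex by simplex. For a simplex $\sigma = \{ l^0, \ldots, l^n \}$ of $\Omega X(k)$ with maximum loop length $m \le k$, trivially extend each $l^j$ to length $k$ (by appending copies of $x_0$) to obtain padded vertices $\bar v^j_i$ for $0 \le i \le k$. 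Send $\sum_j t_j l^j \in |\sigma|$ to the tuple $(x_1, \ldots, x_{k-1})$ with $x_i := \sum_j t_j \bar v^j_i$. The columnwise simplex condition on $\sigma$ (extended trivially for indices $i \ge m$) guarantees each $x_i$ lies in a simplex $\bar\sigma_i$ of $X$ with $(\bar\sigma_1, \ldots, \bar\sigma_{k-1})$ a chain in the sense of \defref{def:stone}, so $r(x) \in N(k)$; face inclusions in $\Omega X(k)$ induce the same padded data, so $r$ is globally continuous.

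For the section, a point $y = (x_1, \ldots, x_{k-1}) \in N(k)$ is supported on a unique minimal chain $(\sigma_1, \ldots, \sigma_{k-1})$, with barycentric expansions $x_i = \sum_{v \in \sigma_i} t^i_v v$. The set
\[
\tau_k(\sigma_\ast) := \{ (x_0, v_1, \ldots, v_{k-1}, x_0) : v_i \in \sigma_i \}
\]
is a simplex of $\Omega X(k)$, its $i$th column being exactly $\sigma_i$, with the chain condition supplying the column-union simplex condition. Set $\iota(y) := \sum_{(v_1, \ldots, v_{k-1})} \bigl( \prod_i t^i_{v_i} \bigr)(x_0, v_1, \ldots, v_{k-1}, x_0) \in |\tau_k(\sigma_\ast)|$. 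The identity $r \circ \iota = \mathrm{id}_{N(k)}$ then reduces to the observation that marginalizing the product distribution $\prod_i t^i_{v_i}$ recovers each factor $t^i_v$.

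The crux is constructing a deformation $H \colon |\Omega X(k)| \times I \to |\Omega X(k)|$ from $\mathrm{id}$ to $\iota \circ r$, fixing $\iota(N(k))$ pointwise. For $x \in |\sigma|$ with max loop length $m$ the deformation is a concatenation of linear interpolations in simplices of $\Omega X$. If $\sigma$ mixes lengths $m$ and $m-1$, an initial stage interpolates within the enlarged simplex obtained by adjoining the trivial extensions of the length-$(m-1)$ vertices (a simplex of $\Omega X$ by the column-preservation argument that generalises \lemref{lem: 3-simplex extensions}(b)), producing an all-length-$m$ configuration. A sequence of $k-m$ further stages linearly interpolates, using \lemref{lem: extension of simplex} at each step, ending at a point whose representing loops all have length $k$ and lie in $\tau_k(\bar\sigma_\ast)$. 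Since $\iota(r(x))$ also lies in $|\tau_k(\bar\sigma_\ast)|$, a final linear interpolation within this product simplex completes the homotopy; for $x = \iota(y)$ the starting point already has the product form, so every stage is constant at $x$.

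The main obstacle is verifying continuity of $H$ across faces of $|\Omega X(k)|$, because the integer $m$ can drop when $x$ crosses to a face, apparently altering the number and nature of the stages. The resolution is twofold: columns of a face are subsets of the columns of the containing simplex, so the product simplices $\tau_k$ nest consistently and the final interpolation is continuous; and any extension stage ``activated'' only on the face corresponds to extending loops whose coefficient is already zero in the ambient simplex, so a careful time-parameterization that weights each extension by the mass of loops still requiring it makes the inserted stage a continuous deformation rather than a discontinuous stage shift. Checking these reductions and piecing them together is routine but tedious.
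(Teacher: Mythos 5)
Your retraction $r$, your section $\iota$ (including the product simplex $\tau_k(\sigma_\ast)$ and the marginalization computation giving $r\circ\iota=\mathrm{id}_{N(k)}$) coincide with the paper's construction. Where you genuinely diverge is the deformation of $\mathrm{id}_{|\Omega X(k)|}$ onto $\iota\circ r$: the paper performs this in a single straight-line homotopy, asserting that $x$ and $\iota(r(x))$ lie in a common simplex of $\Omega X(k)$ containing the carrier $\sigma$ of $x$. Your multi-stage construction is responding to a real tension in that one-step argument: the carrier of $\iota(r(x))$ consists of loops of length exactly $k$, whereas $\sigma$ consists of loops of lengths $m$ and $m-1$, and a simplex of $\Omega X$ can only mix two consecutive lengths; so as soon as $\sigma$ contains a loop of length less than $k-1$ (already for $x$ on the edge $\{\mathbf{x_0},\mathbf{x_0^1}\}$ when $k=2$, since $\{\mathbf{x_0},\mathbf{x_0^1},\mathbf{x_0^2}\}$ is not a simplex) no single simplex carries both ends, and a concatenation of straight-line homotopies through intermediate padding stages, as you propose, is what is actually required. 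On this point your route is the more careful one.

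The one place your write-up falls short of a proof is the continuity of $H$ across faces, which you correctly identify as the crux but then dispose of as ``routine but tedious'': your stage count $k-m$ depends on the carrier, and your proposed reweighting is not pinned down (note also the direction of the difficulty: on a face where the maximal length drops, \emph{more} padding stages are needed, not stages with zero mass). A clean repair: for $j=0,\ldots,k$ let $P_j$ be the vertex map sending a loop $l$ of length $p$ to its trivial extension $(l)_{\max(j,p)}$. Each $P_j$ is simplicial on $\Omega X(k)$ (it preserves columns up to appending $x_0$'s, and image lengths still differ by at most one), is defined vertex-wise and hence automatically compatible with faces, and $P_j$ and $P_{j+1}$ are contiguous, so the straight-line homotopies $|P_j|\simeq |P_{j+1}|$ are globally continuous. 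Concatenating $\mathrm{id}=|P_0|\simeq\cdots\simeq|P_k|$ with a final straight-line homotopy from $|P_k|$ to $\iota\circ r$ inside $|\tau_k(\bar\sigma_\ast)|$ (these product simplices nest along faces, and $r\circ|P_k|=r$) yields your $H$ with a fixed number of stages, stationary on $\iota(N(k))$ because there each $P_j$ and $\iota\circ r$ already act as the identity.
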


To prove the proposition, we  will define continuous maps  $r \colon |\Omega X(k)| \to N(k)$ and $i \colon N(k) \to  |\Omega X(k)|$ and show that   $r\circ i = \mathrm{id}_{N(k)}$ and $i\circ r \approx \mathrm{id}_{|\Omega X(k)|}$.  

To define the map
$r \colon |\Omega X(k)| \to N(k),$
 we define it on each $| \sigma |$, for $\sigma$ a simplex of $\Omega X(k)$. Suppose we have
$\sigma = \{ l^0, \ldots, l^s \}$
for edge loops $l^j$ of length up to $k$.  Recall that if $l$ has length $m \leq k$, then $(l)_{k} = l \circ \alpha_{m}^{k-m}$ is the corresponding loop of length $k$ obtained by adding repeats of $x_{0}$ to the end. 
Suppose that we have a point $x \in |\sigma|$ (in the spatial realization $|\Omega X(k)|$).  
Using the barycentric coordinates of $x$ in $\sigma$, write
$x =  \sum_{j=0}^s\ b_j l^j,$
That is, we have $0 \leq b_j \leq 1$ for each $j$, and $\sum_{j=1}^s\ b_j = 1$.     
Then we may define $r(x) \in N(k)$ as 
\[
    r(x) = \left( \sum_{j=0}^s\ b_j (l^j)_{k}(1), \ldots, \sum_{j=0}^s\ b_j (l^j)_{k}(k-1) \right).
\]
For each $i = 1, \dots, k-1$, the vertices $\{  (l^0)_{k}(i), \ldots,  (l^s)_{k}(i) \}$ form a simplex $\sigma_i$ of $X$, and each coordinate $\sum_{j=0}^s\ b_j (l^j)_{k}(i)$ in the above expression gives a point in   $|\sigma_i| \subseteq X$.  Furthermore, we have $\sigma_i \cup \sigma_{i+1}$ a simplex of $X$ for each $i = 0, \ldots, k-1$.  So, we have 
$$ \{ x_0 \}, \sigma_1, \ldots, \sigma_{k-1}, \{x_0\}$$
a chain of length $k$ and  $f(x) \in N(k)$.   This map is evidently continuous on each simplex $\sigma$.  Furthermore, $f$ defined in this way on two simplices that overlap shares a common value on the overlap (same vertices and barycentric coordinates on the common face).  Hence, $f$ assembles into a continuous map of $|\Omega X(k)|$.  

Now we define a map $i \colon N(k) \to |\Omega X(k)|$.   Suppose we have 
$$y = \left( \sum_{j=0}^{s_1}\ b^1_j v^j_1, \sum_{j=0}^{s_2}\ b^2_j v^j_2, \ldots, \sum_{j=0}^{s_{k-1}}\ b^{k-1}_j v^j_{k-1} \right) \in N(k).$$
Here, we suppose $y \in \sigma_1 \times  \cdots \times \sigma_{k-1} \subseteq N(k)$, with each $\sigma_i$ an $s_i$-simplex of $X$ with vertices
$$\sigma_i = \{  v^0_i, \dots, v^{s_i}_i \}.$$
In each coordinate of $y$, we have barycentric coordinates that satisfy  $\sum_{j=0}^{s_i}\ b^i_j = 1$.  A key point in what follows is that when we multiply these sums, we have
\[
    1 = 1\times \cdots \times 1 = \left( \sum_{j=0}^{s_1}\ b^1_j \right)\times \cdots\times  \left( \sum_{j=0}^{s_{k-1}}\ b^{k-1}_j \right) 
    = \sum_{J \in \mathcal{J}} \ b^1_{j_1} \cdots  b^{k-1}_{j_{k-1}},
\]
where $\mathcal{J}$ is the set of all $(k-1)$-tuples $J = (j_1, \ldots, j_{k-1})$ with each $j_i \in \{ 0, \ldots, s_i \}$.  
This observation allows us to use the terms in the right-hand sum as a new set of barycentric coordinates.

For each $J \in \mathcal{J}$, 
let $l^{J} = l^{j_1, \ldots, j_{k-1}}$ denote the edge loop in $\Omega (k)$ 
with $l^{J}(i) = v_{i}^{j_{i}}$.

Now consider $\tau = \{ l^{J} \mid J \in \mathcal{J} \}$.
We claim that $\tau$ is a (rather high-dimensional) simplex in $\Omega X (k)$. 
Indeed, we have that $\{ l^{J}(i) \mid J \in \mathcal{J} \} = \sigma_{i}$, 
and so $\{ l^{J}(i) \} \cup \{ l^{J}(i+1) \} = \sigma_{i} \cup \sigma_{i+1}$, 
which by assumption is a simplex in $X$.

We may use the barycentric coordinates from above and define
\[
    i(y)= \sum_{J \in \mathcal{J}} \ b^1_{j_1} \cdots  b^{k-1}_{j_{k-1}} 
    l^{j_1, \ldots, j_{k-1}}.
\]
%

\begin{proof}[Proof of Proposition~\ref{prop: homotopy equivalent skeleta}]
First, we prove that $r\circ i = \mathrm{id}\colon N(k) \to N(k)$.

As above, start with $y \in N(k)$ and obtain  
$$i(y)= \sum_J \ b^1_{j_1} \cdots  b^{k-1}_{j_{k-1}} l^{j_1, \ldots, j_{k-1}}.$$
The $i$th coordinate of $r(i(y))$ is then
\[
    z_i = \sum_{J} b_{j_{1}}^{1}  \cdots b^{k-1}_{j_{k-1}} l^{J}(i)
    = \sum_J \ b^1_{j_1} \cdots  b^{k-1}_{j_{k-1}} v^{j_i}_i
\]
in $|\sigma_i|$.  The sum is over all suitable $J$, but we may aggregate the terms that contain a fixed $v^{j_i}_i$, thus:
$$z_i = \left(\sum_{J, j_i = 0} \ b^1_{j_1} \cdots  b^{k-1}_{j_{k-1}}\right) v^{0}_i + \cdots + \left(\sum_{J, j_i = s_i} \ b^1_{j_1} \cdots  b^{k-1}_{j_{k-1}}\right) v^{s_i}_i.$$
For each $(k-1)$-tuple $J = (j_1, \ldots, j_{k-1})$, let $\widehat{J} = (j_1, \ldots, \widehat{j_i}, \ldots, j_{k-1})$, the $(k-2)$-tuple with the $i$th entry omitted. Then the first sum above may be written
$$\sum_{J, j_i = 0} \ b^1_{j_1} \cdots  b^{k-1}_{j_{k-1}} = \left( \sum_{\widehat{J}} \ b^1_{j_1} \cdots  b^{k-1}_{j_{k-1}}\right) b^0_i, $$
with the sum now over all $\widehat{J} = (j_1, \ldots, \widehat{j_i}, \ldots, j_{k-1})$.  But this sum is 
$$\sum_{\widehat{J}} \ b^1_{j_1} \cdots  b^{k-1}_{j_{k-1}} = \left( \sum_{j=0}^{s_1}\ b^1_j \right)\times \cdots \times \widehat{\left( \sum_{j=0}^{s_i}\ b^i_j \right)} \times \cdots \times  \left( \sum_{j=0}^{s_{k-1}}\ b^{k-1}_j \right) = 1 \times \cdots \times 1 = 1,$$
since the $b^i_j$ are barycentric coordinates for each $i$.  Likewise for the other sums involved in the expression above for $z_i$, and it follows that we have 
$$z_i =  b_{0}^{i} v^{0}_i + \cdots + b_{s_i}^{i}  v^{s_i}_i$$ 
and thus $z_i = y_i$, or $r\circ i(y) = y$.

Next, we show that $i\circ r \simeq \mathrm{id} \colon |\Omega X(k)| \to |\Omega X(k)|$.

Suppose $x \in |\Omega X(k)|$.  As above, where we defined the map $r$, we may write 
$x =  \sum_{j=0}^s\ b_j l^j,$
with the $l^j$ the vertices of a simplex $\sigma$  in $\Omega X(k)$.  Tracking the definitions of $r$ and $i$ from above, we may see that $i\circ r(x)$ is contained in a (much larger) simplex $\Sigma$ of
$\Omega X(k)$ that contains the simplex $\sigma$.  Then $x$ and $i\circ r(x)$ are contained in (the spatial realization of) a simplex of $\Omega X(k)$. As in 1.7.4 and 1.7.5 of \cite{HiWi60}, this implies that 
$i\circ r$ and $\mathrm{id}$ are homotopic, using the straight line homotopy pointwise.
  
\end{proof}

In fact, the homotopy equivalence established above is natural, as expressed in the following two
propositions.

\begin{proposition}\label{prop:r-natural}
Let $f : (X, x_{0}) \rightarrow (Y, y_{0})$ be a simplicial map.
For each $k$, the diagram
\[
	\begin{tikzcd}
		{|\Omega X(k)|} \arrow[r, "r_{X}"]
			\arrow[d, "|\Omega f|"']
			& N_{X}(k) \arrow[d, "N_{f}(k)"] \\
		{|\Omega Y(k)|} \arrow[r, "r_{Y}"']
			& N_{Y}(k)
	\end{tikzcd}
\]
commutes.
\end{proposition}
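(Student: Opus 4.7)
The plan is to verify commutativity by evaluating both composites on a typical point of $|\Omega X(k)|$ and showing that they produce the same tuple in $N_Y(k)$, coordinate by coordinate.

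First I would fix a simplex $\sigma = \{ l^0, \ldots, l^s \}$ of $\Omega X(k)$ and a point $x \in |\sigma|$ written in barycentric coordinates as $x = \sum_{j=0}^{s} b_j\, l^j$. Tracing $x$ through $r_Y \circ |\Omega f|$: the spatial realization $|\Omega f|$ carries convex combinations to convex combinations with the same coefficients, so $|\Omega f|(x) = \sum_{j} b_j\, (f\circ l^j)$, a point in the simplex of $\Omega Y(k)$ spanned by $\{ \Omega f(l^0), \ldots, \Omega f(l^s)\}$ (aggregating coefficients if some of the $\Omega f(l^j)$ coincide). Applying $r_Y$ then produces a tuple whose $i$th entry is $\sum_{j} b_j\, (f\circ l^j)_k(i)$.

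Next I would trace $x$ through $N_f(k) \circ r_X$: by definition, $r_X(x)$ has $i$th entry $\sum_{j} b_j\, (l^j)_k(i)$, a point of $|\sigma_i|$ for the simplex $\sigma_i = \{(l^0)_k(i), \ldots, (l^s)_k(i)\}$ of $X$. Since $N_f(k)$ is the restriction of $|f|^{k-1}$, and since the spatial realization $|f|$ respects barycentric combinations on simplices, the $i$th entry of $N_f(k)(r_X(x))$ is $|f|\bigl(\sum_{j} b_j\, (l^j)_k(i)\bigr) = \sum_{j} b_j\, f\bigl((l^j)_k(i)\bigr)$.

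To match the two expressions, I would invoke the fact that trivial extensions commute with simplicial maps, namely $(f\circ l^j)_k = f\circ (l^j)_k$ for each $j$ (this is the same observation used in the proof of \propref{prop:Omega-functor}). Hence $(f\circ l^j)_k(i) = f((l^j)_k(i))$, and the $i$th coordinates of $r_Y(|\Omega f|(x))$ and $N_f(k)(r_X(x))$ coincide for every $1 \le i \le k-1$.

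There is no real obstacle here; the verification is essentially a bookkeeping exercise. The only point requiring mild care is that the vertices $\Omega f(l^0), \ldots, \Omega f(l^s)$ in $\Omega Y(k)$ need not be distinct, and similarly $(l^j)_k(i)$ may coincide for different $j$. In both cases the corresponding barycentric coefficients aggregate compatibly on both sides of the diagram, so the equality is unaffected.
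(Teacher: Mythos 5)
Your proposal is correct and follows essentially the same route as the paper's proof: evaluate both composites on a point written in barycentric coordinates over its carrier simplex and compare the $i$th coordinates, using that $|\Omega f|$ and $|f|$ act linearly on barycentric coordinates. Your explicit appeal to $(f\circ l^j)_k = f\circ (l^j)_k$ and your remark about aggregating coefficients when images coincide are details the paper's proof leaves implicit, but the argument is the same.
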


\begin{proof}
Let $y \in | \Omega X (k) |$ have carrier the $m$-simplex $\sigma = \{ l^{0}, \ldots, l^{m} \}$.
Write $y = \sum_{j=0}^{m} b_{j} l^{j}$;  then 
\[
	N_{f}(k)(r_{X}(y))_{i}
        = N_{f}(k)\left( \sum_{j=0}^{m} b_{j}l^{j}(i)\right) 
						= \sum_{j=0}^{m} b_{j}f(l^{j}(i)).
\]
Meanwhile, 
\begin{multline*}
	r_{Y}(|\Omega f|(y))_{i} 
        = r_{Y}\left(|\Omega f|\left(\sum_{j=0}^{m} b_{j} l^{j}\right)\right)_{i}
		= r_{Y}\left(\sum_{j=0}^{m} b_{j} (\Omega f)(l^{j})\right)_{i} \\
		= r_{Y}\left(\sum_{j=0}^{m} b_{j} (f \circ l^{j})\right)_{i} 
				= \sum_{j=0}^{m} b_{j} (f \circ l^{j})(i) 
\end{multline*} 
and so $N_{f}(k)(r_{X}(y)) = N_{f}(k)\left(r_{X}(y)\right)$ as desired.
\end{proof}

\begin{proposition}\label{prop:i-natural}
Let $f : (X, x_{0}) \rightarrow (Y, y_{0})$ be a simplicial map.
For each $k$, the diagram
\[
	\begin{tikzcd}
		N_{X}(k) \arrow[d, "N_{f}(k)"'] 
			\arrow[r, "i_{X}"]
			& {|\Omega X(k)|} \arrow[d, "|\Omega f|"] \\
		N_{Y}(k) \arrow[r, "i_{Y}"']
			& {|\Omega Y(k)|}
	\end{tikzcd}
\]
commutes.
\end{proposition}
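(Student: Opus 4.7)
The plan is to fix an arbitrary $y \in N_X(k)$, choose each $\sigma_i$ to be the carrier of the coordinate $y_i$ in $|X|$ (so that the barycentric coordinates of $y_i$ in $\sigma_i$ are all positive), and then expand both $|\Omega f|(i_X(y))$ and $i_Y(N_f(k)(y))$ in barycentric coordinates on $|\Omega Y(k)|$. The two expressions will turn out to be the same sum, written with different index sets, and a single regrouping identifies them.

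First I would write $y_i = \sum_{j=0}^{s_i} b^i_j v^j_i$ with $\sigma_i = \{v^0_i, \ldots, v^{s_i}_i\}$, so by definition $i_X(y) = \sum_{J \in \mathcal{J}} b^1_{j_1} \cdots b^{k-1}_{j_{k-1}} \, l^J$. Because $|\Omega f|$ is affine on each closed simplex of $|\Omega X(k)|$ and $\Omega f(l) = f \circ l$ on vertices, we get
\[
|\Omega f|\bigl(i_X(y)\bigr) = \sum_{J \in \mathcal{J}} b^1_{j_1} \cdots b^{k-1}_{j_{k-1}} \,(f \circ l^J).
\]
For the other route, observe that $\tau_i := f(\sigma_i)$ is a simplex of $Y$, and since all $b^i_j > 0$, the carrier of $f(y_i)$ is exactly $\tau_i$. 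Writing $\tau_i = \{w^0_i, \ldots, w^{t_i}_i\}$, we have $f(y_i) = \sum_{h=0}^{t_i} c^i_h w^h_i$ with $c^i_h = \sum_{j:\, f(v^j_i) = w^h_i} b^i_j$, and applying $i_Y$ to $N_f(k)(y)$ gives
\[
i_Y\bigl(N_f(k)(y)\bigr) = \sum_{H \in \mathcal{J}'} c^1_{h_1} \cdots c^{k-1}_{h_{k-1}} \,m^H,
\]
where $m^H$ is the loop in $\Omega Y(k)$ with $m^H(i) = w^{h_i}_i$ and $\mathcal{J}'$ is the analogous index set for $Y$.

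The closing step is the regrouping. The map $\phi \colon \mathcal{J} \to \mathcal{J}'$ sending $J = (j_1,\ldots,j_{k-1})$ to the tuple $H(J)$ determined by $w^{h_i(J)}_i = f(v^{j_i}_i)$ satisfies $f \circ l^J = m^{\phi(J)}$. Summing over fibers yields
\[
|\Omega f|\bigl(i_X(y)\bigr) = \sum_{H \in \mathcal{J}'} \Bigl( \sum_{J \in \phi^{-1}(H)} b^1_{j_1} \cdots b^{k-1}_{j_{k-1}} \Bigr) m^H,
\]
and the inner sum factors via the distributive law as $\prod_{i=1}^{k-1} \sum_{j:\, f(v^j_i) = w^{h_i}_i} b^i_j = \prod_{i=1}^{k-1} c^i_{h_i}$, which matches the coefficient of $m^H$ in $i_Y(N_f(k)(y))$. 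The only subtle point is the bookkeeping around how $f$ may collapse vertex indices within each $\sigma_i$; the underlying algebraic identity is the same ``product of sums equals sum of products'' that was used to define $i_X$ in the first place, now applied in reverse to pull $\prod_i c^i_{h_i}$ apart into a fibered sum over $\mathcal{J}$.
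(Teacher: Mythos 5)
Your proof is correct and follows the same route as the paper's: expand $i_X(y)$ in barycentric coordinates, apply $|\Omega f|$ vertex-wise using $\Omega f(l) = f\circ l$, and match the result with $i_Y(N_f(k)(y))$. You are in fact somewhat more careful than the paper, which simply writes $f(y_i)=\sum_j b^i_j f(v^j_i)$ and identifies the resulting edge paths $\lambda^J = f\circ l^J$ with those of the $i_Y$-construction without addressing the possibility that $f$ collapses vertices of $\sigma_i$; your regrouping over the fibers of $\phi$ (the ``sum of products equals product of sums'' identity applied to the aggregated coefficients $c^i_h$) supplies exactly the bookkeeping that the paper's one-line computation leaves implicit.
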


\begin{proof}
Suppose $y = (y_{1}, \ldots, y_{k-1}) \in N_{X}(k)$.
As in the construction (Section [...]), suppose that the carrier of $y_{i}$ is $\sigma_{i} = \{ v_{i}^{0}, \ldots, v_{i}^{s_{i}} \}$,
and write $y_{i} = \sum_{j=0}^{s_{i}} b_{j}^{i} v_{i}^{j}$.
For each sequence $J = (j_{1}, \ldots, j_{k-1})$ with $j_{i} \in \{ 0, \ldots, s_{i} \}$, we have two associated edge paths, one in 
$X$, corresponding to $y$, and the other in $Y$, corresponding to $f(y)$.  
The first we denote by $l^{J}$, with $l^{J}(i) = v_{i}^{j_{i}}$ (of course, $l^{J}(0) = l^{J}(k) = x_{0}$).
The second we will denote by $\lambda^{J}$.  
Since $f(y_{i}) = \sum_{j=0}^{s_{i}} b_{j}^{i} f(v_{i}^{j})$, we have for all $i$ that $\lambda^{J}(i) = f(v_{i}^{j_{i}})$.
It follows that $\lambda^{J} = f \circ l^{J}$.

Thus
\[
	|\Omega f|(i_{X}(y)) = |\Omega f| \left( \sum_{J} b_{J}l^{J} \right)
		= \sum_{J} b_{J} (f \circ l^{J}) 
		= i_{Y}(f(y))
		= i_{Y}(N_{f}(k)(y)),
\]
as desired.
\end{proof}

\section{Homotopy Direct Limits}\label{sec: limits}

In this section, we show that $|\Omega X|$ is homotopy equivalent to the homotopy colimit of
Stone's approximations $N(k)$ (Definition~\ref{def:stone}).  
We first recall the direct system for these approximations.

Let $k_{1}, k_{2}, \ldots$ be a sequence of positive integers in which each term divides the following term.
For each $k$ in the sequence, fix a partition of $[0,1]$, 
$T_{k} : 0 = t_{0} < t_{1} < \cdots < t_{k-1} < t_{k}= 1$.
Suppose that $k'=ck$ for some positive integer $c$, and write $T_{k'}$ as
$0 = t'_{0} < \cdots < t_{k'-1} < t_{k'} =  1$.
We require that $t'_{ci} = t_{i}$,
and that the mesh of $T_{k}$ approaches $0$ as $k$ increases.

For successive terms $k$, $k'$ in the above sequence, Stone defines a Hurewicz cofibration 
$\iota_{k',k} : N(k) \rightarrow N(k')$  as follows.   
Let $b \in N(k)$;  then for some chain $(\sigma_{i})$ of length $k$ in $X$, we can write $b = (b_{i})$, 
where $b_{i} \in |\sigma_{i}|$ for $i = 1, \ldots, k-1$.
Suppose that $k' = ck$ for some positive integer $c$. 
For a given $i$ satisfying $1 \leq i \leq k$, suppose that $c(i-1) \leq \ell < ci$.
Define $b'_{\ell}$ to be the following convex combination of $b_{i-1}$ and $b_{i}$ in 
$|\sigma_{i-1} \cup \sigma_{i}|$:
\[
    b'_{\ell} = \frac{t_{i}-t'_{\ell}}{t_{i} - t_{i-1}} b_{i-1} 
    + \frac{t'_{\ell} - t_{i-1}}{t_{i}-t_{i-1}} b_{i},
\]
where we use the convention that $b_{0} = b_{k} = x_{0}$.
Define $\iota_{k',k}(b) = (b'_{\ell})$.
In~\cite{St79}, Stone shows that $\Omega |X|$ is homotopy equivalent to the homotopy colimit of the maps $\iota_{k',k}$.

Figure~\ref{fig:iota} illustrates $\iota_{6,3}(b)$, where $(b) = (b_{1},b_{2}) \in \sigma_{1} \times \sigma_{2} \in N(3)$.
In this case, $\iota_{6,3}(b) = (b_{1}', \ldots, b_{5}')$, where $b_{1}' \in \langle x_{0}, \sigma_{1}\rangle$, $b_{2}' = b_{1}$,  $b_{3}' \in \langle \sigma_{1}, \sigma_{2} \rangle$, $b_{4}' = b_{2}$, and $b_{5}' \in \langle \sigma_{2}, x_{0} \rangle$.

Let $j_{k',k} : \Omega X(k) \rightarrow \Omega X(k')$ be the inclusion.

\begin{proposition}\label{prop:htpy-commute}
The diagram
\[
	\begin{tikzcd}
		{| \Omega X(k) |} \arrow[r, "|j_{k',k}|"]
			\arrow[d, "r_{k}"']
			& {| \Omega X(k') |} \arrow[d, "r_{k'}"] \\
		N(k) \arrow[r, "\iota_{k',k}"]
			& N(k')
	\end{tikzcd}
\]
commutes up to homotopy.
\end{proposition}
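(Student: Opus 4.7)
My plan is to construct an explicit homotopy $H \colon |\Omega X(k)| \times [0,1] \to N(k')$ connecting the two compositions at $t=0$ and $t=1$. The key first observation is that for $x = \sum_j b_j l^j$ in the open simplex of $\sigma = \{l^0, \ldots, l^s\}$ of $\Omega X(k)$, both $\iota_{k',k}(r_k(x))$ and $r_{k'}(|j_{k',k}|(x))$ are chains in $N(k')$ that encode piecewise-linear paths in $|X|$ constructed from the same underlying data of the $l^j$. They differ only in the sampling rate relative to the partitions $T_k$ and $T_{k'}$: the first visits the loop data at the coarser rate $T_k$ and linearly interpolates to $T_{k'}$, while the second visits the same data at the finer rate $T_{k'}$ (followed by constancy at $x_0$ out to time $1$). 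In this sense the two chains are reparametrizations of one another.

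The naive straight-line homotopy in $|X|^{k'-1}$ between the two chains need not remain within $N(k')$: a coordinate of the straight line can pass through a union of simplices that is not itself a simplex of $X$. For example, with $X$ the boundary of the $2$-simplex on $\{x_0, v_1, v_2\}$ and $l = (x_0, v_1, v_2, x_0)$, $k=3$, $k'=6$, the third coordinate of the straight line would traverse $\{x_0, v_1, v_2\}$, which is not a face of $X$. To circumvent this, I would build the homotopy in elementary stages that modify a single coordinate position at a time, choosing intermediate chains so that each stage is a straight-line segment within a single cell of $N(k')$. The strategy is to propagate the loop data from the interior of the chain outward toward the endpoints, one coordinate at a time, in close analogy with the ``shift and pad'' procedure used to organize loops by length in the proof of \propref{prop: loop in X(M)}.

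The main obstacle will be organizing these elementary stages into a \emph{continuous} family parametrized by $x$. Within a fixed simplex of $\Omega X(k)$, each elementary stage depends affinely on the barycentric coordinates $(b_j)$, so continuity on the open simplex is automatic; compatibility across faces of $\Omega X(k)$ then requires that the schedule of coordinate-modifications be chosen as a function only of $k$ and $k'$, not of $\sigma$. The chain condition of \defref{def:stone} at each intermediate time follows by appealing to \remref{rem: simplices as arrays}: the column-wise unions in a simplex of $\Omega X(k)$ are simplices of $X$, which supplies the ambient simplex in $X$ needed to contain each elementary straight-line step and to certify the chain condition between adjacent coordinates. From the schedule one then reads off $H_0 = \iota_{k',k} \circ r_k$ and $H_1 = r_{k'} \circ |j_{k',k}|$ at the endpoints, completing the proof.
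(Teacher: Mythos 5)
Your diagnosis is exactly right on two counts: the two compositions produce chains in $N(k')$ that are resamplings of one and the same piecewise-linear loop built from the data of the carrier simplex, and the naive straight-line homotopy between them in $|X|^{k'-1}$ can leave $N(k')$ (your hollow-triangle example correctly illustrates why). But the argument stops exactly where the work begins. You never specify the ``schedule'' of elementary one-coordinate moves, never exhibit the intermediate chains, and never verify that each intermediate tuple satisfies the chain condition of \defref{def:stone}; you name the continuity and face-compatibility problem as the main obstacle and then assert, rather than show, that a schedule depending only on $k$ and $k'$ resolves it. As written, this is a plan with a genuine gap at its center rather than a proof.

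The paper closes that gap with one idea that makes the staging unnecessary. Given $y$ with carrier $\{l^0,\dots,l^n\}$ and $\iota_{k',k}\circ r_k(y)=(z_1,\dots,z_{k'-1})$, it forms the piecewise-linear loop $\alpha_y\colon I\to|X|$ through the chain $(\sigma_1,\dots,\sigma_{k-1})$ with $\alpha_y(t'_i)=z_i$ and $\alpha_y(0)=\alpha_y(1)=x_0$. The homotopy then slides each sample point along $\alpha_y$: the $i$th coordinate at time $s$ is $\alpha_y$ evaluated at the linear interpolation from $t'_i$ to $t_i$ (for $i\le k-1$) or from $t'_i$ to $1$ (for $i\ge k$). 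At $s=0$ this recovers $\iota_{k',k}\circ r_k(y)$, at $s=1$ it recovers $r_{k'}\circ|j_{k',k}|(y)=(y_1,\dots,y_{k-1},x_0,\dots,x_0)$, and since $\alpha_y$ depends continuously (affinely on each simplex) on $y$, compatibility across faces of $\Omega X(k)$ is automatic---no combinatorial schedule is needed. All coordinates move simultaneously along a path that already lies in the polyhedron of the chain, which is what keeps the deformation inside $N(k')$. If you wish to salvage your staged approach, you must actually write down the intermediate chains and check the chain condition for every adjacent pair of coordinates at every stage; the reparametrization device avoids all of that.
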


\begin{proof}
Recall that $N(k')$ is a polyhedral subcomplex of $X^{k'-1}$.  
Let $p_{i} : N(k') \rightarrow X$ be the restriction of the $i$th projection map, for $i = 1, \ldots, k'-1$.
It suffices to show that $p_{i} \circ \iota_{k',k} \circ r_{k} \simeq p_{i} \circ  r_{k'} \circ |j_{k',k}| $ 
for each $i$.

Suppose that $k' = ck$ for some  integer $c \geq 2$.
Let $y \in |\Omega X(k)|$.
Suppose that the carrier of $y$ is the simplex $\{ l^{0}, \ldots, l^{n}\}$.
Using barycentric coordinates, $y = \sum_{\ell=0}^{n} b_{\ell} l^{\ell}$.
Write $l^{\ell}(i) = v_{i}^{\ell}$ (so $v_{0}^{\ell} = v_{k}^{\ell})$.
As usual, set $\sigma_{i} = \{ v_{i}^{0}, \ldots, v_{i}^{\ell} \}$, for $i = 1, \ldots, k-1$.
Using the barycentric coordinates for $y$, set 
$y_{i} = \sum_{\ell=0}^{n} b_{\ell} v_{i}^{\ell} \in |\sigma_{i}|$.

Recall that we have fixed $k$- and $k'$-partitions $T_{k}$ and $T_{k'}$ of the unit interval $I$, such that 
$t'_{ci} = t_{i}$ for $i = 0, \ldots, k$.
From the definitions, $r_{k'}\circ |j_{k',k} |(y) = (y_{1}, \ldots, y_{k-1}, x_{0}, \ldots, x_{0})$.
Furthermore, $\iota_{k',k} \circ r_{k} (y) = (z_{1}, \ldots, z_{k'-1})$, where $z_{ci} = y_{i}$ and $z_{\ell}$ lies on the line segment
from $y_{i-1}$ to $y_{i}$ if $c(i-1) < \ell < ci$.
Therefore there is a piecewise-linear path $\alpha_{y} : I \rightarrow |X|$ in the
chain $(\sigma_{1}, \ldots, \sigma_{k-1})$ such that $\alpha_{y}(0) = \alpha_{y}(1) = x_{0}$, and
for $i = 1, \ldots, k'-1$, $\alpha_{y}(t'_{i})=z_{i}$.
Furthermore, $\alpha_{y}$ depends continuously on $y$; that is, the evaluation map $|\Omega X(k)| \times I \rightarrow |X|$,
$(y,s) \mapsto \alpha_{y}(s)$, is continuous.

Define a homotopy $h_{i} : |\Omega X(k)| \rightarrow X$ by 
\[
		h_{i}(y,s) = \left\{
			\begin{array}{ll}
				\alpha_{y}(s(t_{i}-t_{i}')+t_{i}')		& \text{if }i=1,\ldots,k-1	\\
				\alpha_{y}(s(1-t_{i}')+t_{i}')			& \text{if }i=k, \ldots, k'-1
			\end{array}
			\right.
\]
for $i = 1, \ldots, k'-1$.

For all $i$, we have 
\[
    h_{i}(y,0) = \alpha_{y}(t'_{i}) = z_{i} = p_{i} \circ \iota_{k',k} \circ r_{k}(y).
\]
If $i \leq k-1$, then 
\[
    h_{i}(y,1) = \alpha_{y}(t_{i}) = \alpha_{y}(t'_{ci}) 
    = z_{ci}= y_{i} = p_{i} \circ r_{k'}\circ |j_{k',k} |(y).
\]
Otherwise, if $i \geq k$, then
\[
    h_{i}(y,1) = \alpha_{y}(1) = x_{0} = p_{i} \circ r_{k'}\circ |j_{k',k} |(y).
\]
Therefore the maps $h_{i}$ determine a homotopy 
$h : \iota_{k',k} \circ r_{k} \simeq r_{k'} \circ |j_{k', k}|$.
\end{proof}

It is a standard exercise that the homotopy-commutative square in Proposition~\ref{prop:htpy-commute}
can be ``rigidified'', as expressed in the following proposition.

\begin{proposition}\label{prop:strict-commute}
For all $k$ in the sequence $(k_{1}, k_{2}, \ldots)$, there exists a map $\bar{r}_{k} : |\Omega X(k)| \rightarrow N(k)$ that is homotopic
to $r_{k}$, such that the diagram
\[
	\begin{tikzcd}
		{| \Omega X(k) |} \arrow[r, "|j_{k',k}|"]
			\arrow[d, "\bar{r}_{k}"']
			& {| \Omega X(k') |} \arrow[d, "\bar{r}_{k'}"] \\
		N(k) \arrow[r, "\iota_{k',k}"]
			& N(k')
	\end{tikzcd}
\]
commutes strictly.
\end{proposition}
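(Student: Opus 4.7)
The plan is to proceed by induction on $i$ in the sequence $(k_{1}, k_{2}, \ldots)$, constructing the maps $\bar{r}_{k_{i}}$ one at a time so that each successive square commutes strictly. The key tool will be the homotopy extension property (HEP): each inclusion $|j_{k_{i+1}, k_{i}}| : |\Omega X(k_{i})| \hookrightarrow |\Omega X(k_{i+1})|$ is the inclusion of a subcomplex into a simplicial complex, and is therefore a Hurewicz cofibration.

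For the base case I would set $\bar{r}_{k_{1}} := r_{k_{1}}$. Assuming inductively that $\bar{r}_{k_{i}} : |\Omega X(k_{i})| \to N(k_{i})$ has been constructed together with a chosen homotopy $\bar{r}_{k_{i}} \simeq r_{k_{i}}$, I would postcompose that homotopy with $\iota_{k_{i+1}, k_{i}}$ and concatenate it with a homotopy $\iota_{k_{i+1}, k_{i}} \circ r_{k_{i}} \simeq r_{k_{i+1}} \circ |j_{k_{i+1}, k_{i}}|$ supplied by \propref{prop:htpy-commute} to obtain a homotopy
$$H : |\Omega X(k_{i})| \times [0,1] \to N(k_{i+1})$$
from $H_{0} = r_{k_{i+1}} \circ |j_{k_{i+1}, k_{i}}|$ to $H_{1} = \iota_{k_{i+1}, k_{i}} \circ \bar{r}_{k_{i}}$.

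Next, because $|j_{k_{i+1}, k_{i}}|$ is a cofibration, applying the HEP to the pair $(r_{k_{i+1}}, H)$ produces an extension $\tilde{H} : |\Omega X(k_{i+1})| \times [0,1] \to N(k_{i+1})$ with $\tilde{H}_{0} = r_{k_{i+1}}$ and $\tilde{H}|_{|\Omega X(k_{i})| \times [0,1]} = H$. I would then define $\bar{r}_{k_{i+1}} := \tilde{H}_{1}$. By construction, $\bar{r}_{k_{i+1}} \simeq r_{k_{i+1}}$ via $\tilde{H}$, and
$$\bar{r}_{k_{i+1}} \circ |j_{k_{i+1}, k_{i}}| = H_{1} = \iota_{k_{i+1}, k_{i}} \circ \bar{r}_{k_{i}},$$
which is precisely the strict commutativity claimed.

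No serious obstacle arises beyond verifying that inclusions of subcomplexes into simplicial complexes are Hurewicz cofibrations, which is standard. This is the classical rigidification argument that turns a ladder of homotopy-commuting squares into a strictly commuting one by absorbing the connecting homotopies into the vertical maps one level at a time; since the diagram in question only involves consecutive terms of the sequence, the inductive construction above suffices.
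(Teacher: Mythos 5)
Your proof is correct and is essentially the same argument the paper gives (the paper states it more tersely, citing that $|j_{k_{i+1},k_i}|$ is a Hurewicz cofibration and then "replacing $r_{k_{i+1}}$ with a homotopic map"); your write-up simply makes explicit the concatenated homotopy and the application of the homotopy extension property.
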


\begin{proof}
Let $\bar{r}_{k_{1}} = r_{k_{1}}$.
For $i \geq 1$, suppose that $\bar{r}_{k_{i}}$ has been constructed.
Since $j_{k_{i+1},k_{i}} : |\Omega X(k_{i})| \rightarrow |\Omega X(k_{i+1})|$ is a countable relative cell extension, it is a Hurewicz
cofibration.
Thus we may replace $r_{k_{i+1}}$ with a homotopic map that makes the diagram commute.
\end{proof}

Since each inclusion $|j_{k',k}|$ is a Hurewicz cofibration, and the sequence $(k_{i})$ is cofinal in the
natural numbers, we conclude that $|\Omega X| = \operatorname{hocolim}_{k}|\Omega X(k)|$.
Taking colimits, we obtain a map $\bar{r} : |\Omega X| \rightarrow \operatorname{hocolim}_{k}N(k)$.

\begin{theorem}\label{thm:r-equiv}
The map $\bar{r} : |\Omega X| \rightarrow \operatorname{hocolim}_{k}N(k)$ is a homotopy equivalence.
\end{theorem}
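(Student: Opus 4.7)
The plan is to reduce \thmref{thm:r-equiv} to a levelwise statement plus a standard colimit-of-cofibrations argument. I first observe that for each $k$ in our cofinal sequence $(k_i)$, the map $\bar{r}_k \colon |\Omega X(k)| \to N(k)$ is a homotopy equivalence: by \propref{prop:strict-commute} it is homotopic to $r_k$, and by \propref{prop: homotopy equivalent skeleta} the map $r_k$ is a deformation retraction and hence a homotopy equivalence. Thus I have a strictly commuting ladder
\[
\begin{tikzcd}
|\Omega X(k_1)| \arrow[r] \arrow[d, "\bar{r}_{k_1}"'] & |\Omega X(k_2)| \arrow[r] \arrow[d, "\bar{r}_{k_2}"'] & \cdots \\
N(k_1) \arrow[r] & N(k_2) \arrow[r] & \cdots
\end{tikzcd}
\]
whose vertical maps are homotopy equivalences, whose top horizontal maps are the Hurewicz cofibrations $|j_{k_{i+1},k_i}|$ (relative CW-inclusions), and whose bottom horizontal maps are Stone's Hurewicz cofibrations $\iota_{k_{i+1},k_i}$.

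Next I would invoke the standard fact that a natural transformation of sequential diagrams indexed by $\mathbb{N}$, all of whose structure maps are closed Hurewicz cofibrations (or more generally such that the sequence is cofibrant in the projective model structure), and which is a levelwise weak equivalence, induces a weak equivalence---indeed a homotopy equivalence for diagrams of CW-type---on the sequential (ho)colimits. Applied to our ladder, this yields a homotopy equivalence
\[
\operatorname{colim}_k \bar{r}_k \colon \operatorname{colim}_k |\Omega X(k_i)| \longrightarrow \operatorname{colim}_k N(k_i),
\]
and under the cofibration hypothesis these strict colimits compute the homotopy colimits. Since $(k_i)$ is cofinal in $\mathbb{N}$ and the maps $|j_{k',k}|$ are cofibrations, the left-hand homotopy colimit is canonically homotopy equivalent to $|\Omega X|$ (the complexes $|\Omega X(k)|$ exhaust $|\Omega X|$ under the cofibrant inclusions); by construction the induced map is exactly $\bar{r}$.

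The only real technicality is checking the cofibration hypothesis needed for the colimit-comparison lemma. On the simplicial side, $|j_{k',k}|$ is the inclusion of a subcomplex of a CW complex and is therefore a closed Hurewicz cofibration, so this is automatic. On the Stone side, I would cite \cite[\S 4]{St79} where $\iota_{k',k}$ is shown to be a Hurewicz cofibration. The main obstacle I anticipate is purely expository: carefully stating and citing the version of the ``levelwise equivalence implies colimit equivalence along cofibrations'' lemma that fits our setting (e.g.\ the telescope argument, or Milnor's comparison theorem for sequential homotopy colimits), and confirming that the strict colimit on the simplicial side is $|\Omega X|$ with the natural map to its homotopy colimit being a homotopy equivalence. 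Once this bookkeeping is complete, the conclusion that $\bar{r}$ is a homotopy equivalence is immediate.
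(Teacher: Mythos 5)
Your proposal is correct and follows essentially the same route as the paper: both arguments observe that each $\bar{r}_k$ is a homotopy equivalence (via \propref{prop:strict-commute} and \propref{prop: homotopy equivalent skeleta}) and then pass to the colimit using Milnor's comparison theorem for sequential (ho)colimits along cofibrations. Your additional care in verifying the cofibration hypotheses and identifying the strict colimit with $|\Omega X|$ is exactly the bookkeeping the paper leaves implicit.
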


\begin{proof}
By construction, for each $k$ in the sequence $(k_{i})$, the map $\bar{r}$ restricts to 
$\bar{r}_{k} : |\Omega X (k)| \rightarrow N(k)$.  
Since $\bar{r}_{k} \simeq r_{k}$ and $r_{k}$ is a homotopy equivalence, so is $\bar{r}_{k}$.
By Milnor~\cite[Theorem A]{milnor63}, $\bar{r}$ is itself a homotopy equivalence.
\end{proof}

Combining Theorem~\ref{thm:r-equiv} with Stone's result [ref], we obtain the following corollary.

\begin{corollary}\label{cor:r-equiv}
    There is a homotopy equivalence, $|\Omega X| \simeq \Omega |X|$.
\end{corollary}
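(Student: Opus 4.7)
The plan is to deduce the corollary as a direct composition of two homotopy equivalences: the one just established in \thmref{thm:r-equiv} and Stone's theorem from \cite{St79}. Stone's main result is precisely that the topological based loop space $\Omega|X|$ is homotopy equivalent to the (homotopy) colimit of the direct system $\{N(k), \iota_{k',k}\}$ under the piecewise-linear interpolation maps recalled at the start of \secref{sec: limits}. Since the $\iota_{k',k}$ were noted there to be Hurewicz cofibrations, this colimit coincides (up to canonical homotopy equivalence) with the homotopy colimit $\operatorname{hocolim}_k N(k)$, so I would state Stone's result in the form
\[
\Omega|X| \simeq \operatorname{hocolim}_k N(k).
\]

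With this in hand, the corollary is immediate: \thmref{thm:r-equiv} supplies a homotopy equivalence $\bar{r}\colon |\Omega X| \to \operatorname{hocolim}_k N(k)$, and composing with (the inverse of) Stone's equivalence yields
\[
|\Omega X| \simeq \operatorname{hocolim}_k N(k) \simeq \Omega|X|,
\]
which is the desired statement. The only real work is bookkeeping: to quote Stone cleanly one should verify that the direct system Stone uses is (up to cofinality and homotopy) the same one assembled in \secref{sec: limits}, but this was built into the setup there by our choice of partitions $T_k$ with compatible mesh conditions and by Stone's own construction of $\iota_{k',k}$.

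The step I expect to require the most care is articulating Stone's equivalence with enough precision to line it up with ours; in particular, making sure that the basepoint of $\Omega|X|$ (the constant loop at $x_0$) corresponds, under Stone's equivalence, to the image of the basepoint component of $|\Omega X|$ under $\bar r$. After that, no further technical work is needed: two homotopy equivalences with a common intermediate space compose to a homotopy equivalence, and the corollary follows.
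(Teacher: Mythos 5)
Your proposal matches the paper exactly: the corollary is obtained by composing the homotopy equivalence $\bar{r}\colon |\Omega X| \to \operatorname{hocolim}_{k}N(k)$ of \thmref{thm:r-equiv} with Stone's result that $\Omega|X|$ is homotopy equivalent to that same homotopy colimit. The paper treats this as immediate and gives no further argument, so your version is, if anything, slightly more careful about the bookkeeping.
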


\begin{figure}\label{fig:iota}
    \begin{center}
        \begin{tikzpicture}
        	%
        	%
        	\coordinate [label=left:$x_{0}$] (P) at (0,-1);
        	\coordinate [label=right:$x_{0}$] (Q) at (12,-1);
        	\coordinate (A) at (2,3);
        	\coordinate (B) at (4,-0.5);
        	\coordinate (C) at (8,-2);
        	\coordinate (D) at (9,3);
        	\fill (P) circle (2pt);
        	\fill (Q) circle (2pt);
        	\draw (A) --  (B);
        	\draw[dotted] (B) -- (Q);
        	\filldraw[fill=gray, draw=black] 
        		(B) -- (C) -- (D) -- cycle;
        	\draw[dotted] (A) -- (P) -- (B);
        	\draw[dotted] (C) -- (A) -- (D) -- (Q) -- cycle;
        	\coordinate [label = left:$b_{1}$] (x1) at ($(A)!0.3!(B)$);
        	\fill (x1) circle (2pt);
        	\coordinate (temp) at ($(B)!0.3!(C)$);
        	\coordinate [label=above right:$b_{2}$] (x2) at ($(temp)!0.25!(D)$);
        	\fill (x2) circle (2pt);
        	\draw[dashed, color=red] (P) -- (x1) -- (x2) -- (Q);
        	%
        	%
        	\draw [->] (6,-2) -- node [anchor = west] {$\iota_{6,3}$} (6,-3.5);
        	\coordinate [label=left:$x_{0}$] (PP) at (0,-8);
        	\coordinate [label=right:$x_{0}$] (QQ) at (12,-8);
        	\coordinate (AA) at (2,-4);
        	\coordinate (BB) at (4,-7.5);
        	\coordinate (CC) at (8,-9);
        	\coordinate (DD) at (9,-4);
        	\fill (PP) circle (2pt);
        	\fill (QQ) circle (2pt);
        	\draw (AA) --  (BB);
        	\draw[dotted] (BB) -- (QQ);
        	\filldraw[fill=gray, draw=black] 
        		(BB) -- (CC) -- (DD) -- cycle;
        	\draw[dotted] (AA) -- (PP) -- (BB);
        	\draw[dotted] (CC) -- (AA) -- (DD) -- (QQ) -- cycle;
        	\coordinate [label = left:$b_{2}'$] (b2prime) at ($(AA)!0.3!(BB)$);
        	\coordinate (temp2) at ($(BB)!0.3!(CC)$);
        	\coordinate [label=above right:$b_{4}'$] (b4prime) at ($(temp2)!0.25!(DD)$);
        	\draw[dashed, color=red] (PP) -- (b2prime) -- (b4prime) -- (QQ);
        	\coordinate [label=above:$b_{1}'$] (b1prime) at ($(PP)!0.3!(b2prime)$);
        	\coordinate [label=above:$b_{3}'$] (b3prime) at ($(b2prime)!0.2!(b4prime)$);
        	\coordinate [label=above:$b_{5}'$] (b5prime) at ($(b4prime)!0.7!(QQ)$);
        	\foreach \x in {(b1prime), (b2prime), (b3prime), (b4prime), (b5prime)}
        		\fill \x circle (2pt);
        \end{tikzpicture}
    \end{center}
    \caption{An illustration of $\iota_{6,3} : N(3) \rightarrow N(6)$}
\end{figure}
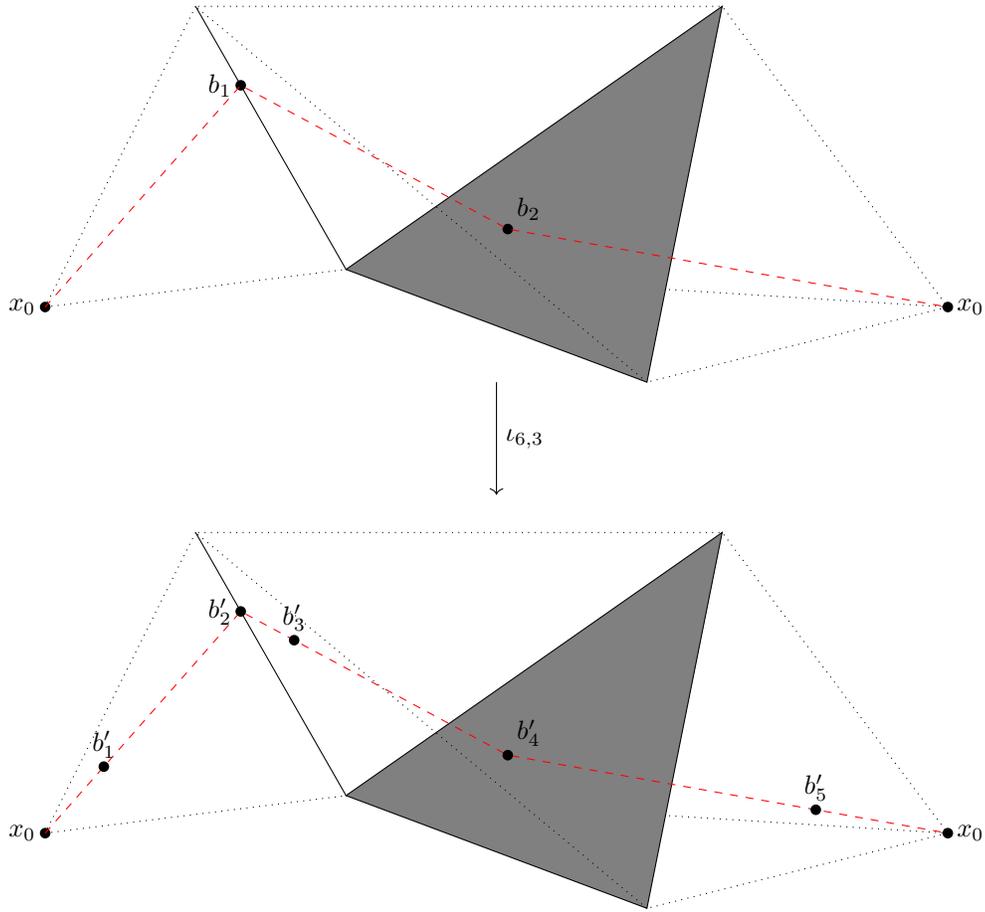

\section{Future Work}\label{sec: future}

There are a number of ideas that flow from our results that we intend to develop in subsequent articles. We indicate them briefly here.

Firstly, although we have focussed on the based loop space here, the construction of $\Omega X$ given in \secref{sec: Omega X} 
readily adapts or extends to similar constructions for various path and loop spaces, such as the based path space $\mathcal{P}X$, the free path space $PX$ and the free loop space $\Lambda X$. The resulting constructions are  variants of the path complexes described in \cite{FGMV21}, as $\Omega X$ is a variant of the $\widetilde{\Omega}X$ of \cite{Gr02} (see the discussion in \secref{sec: Intro}). We will investigate whether a somewhat different formulation of path and loop spaces may allow for advances on the topics considered in \cite{FGMV21}, such as category and topological complexity. 

The edge group of a (finite) simplicial complex has appeal from an algorithmic point of view. Although $\Omega X$ is not finite, it does have the ``locally finite" property mentioned in \secref{sec: Intro} (each vertex is of finite valency). Since $E(\Omega X, x_0) \cong F(X, x_0)$, it should be possible to approach $F(X, x_0)$  (and higher homotopy groups, suitably defined) in an similarly algorithmic way.  In particular, we hope to incorporate some of the ideas from \cite{Fr-Mu99} to develop ways of applying our $F(X, x_0)$ to analyze features of 3D digital images that are detectable by (digital counterparts to) second homotopy groups.  Such an approach using fundamental groups is discussed in \cite{Gr02}.

\bibliographystyle{amsplain}

\providecommand{\bysame}{\leavevmode\hbox to3em{\hrulefill}\thinspace}
\providecommand{\MR}{\relax\ifhmode\unskip\space\fi MR }
\providecommand{\MRhref}[2]{%
  \href{http://www.ams.org/mathscinet-getitem?mr=#1}{#2}
}
\providecommand{\href}[2]{#2}

\end{document}